\numberwithin{equation}{section}
\newtheorem{theorem}[equation]{Theorem}
\newtheorem{proposition}[equation]{Proposition}
\newtheorem{lemma}[equation]{Lemma}
\newtheorem{assumption}[equation]{Assumption}
\theoremstyle{definition} 
\newtheorem{definition}[equation]{Definition}
\newtheorem{remark}[equation]{Remark}
\theoremstyle{remark} 
\newtheorem{example}[equation]{Example}
\renewcommand{\and}{\quad\text{and}\quad}
\newcommand{\resp}{\text{resp. }}
\renewcommand{\for}{\text{ for }}
\newcommand{\ZZ}{\mathbb{Z}}
\newcommand{\NN}{\mathbb{N}}
\newcommand{\ca}{{\mathcal A}}
\newcommand{\Dr}{{\operatorname{D}}_1}
\newcommand{\Dk}{{\operatorname{D}}_0}
\renewcommand{\mod}{\mathop{\operatorname{mod}}}
\DeclareMathOperator{\umod}{\underline{mod}}
\DeclareMathOperator{\proj}{proj}
\DeclareMathOperator{\simp}{sim}
\DeclareMathOperator{\add}{add}
\DeclareMathOperator{\ind}{ind}
\DeclareMathOperator{\im}{Im}
\DeclareMathOperator{\Tr}{Tr}
\DeclareMathOperator{\soc}{soc}
\DeclareMathOperator{\Id}{Id}
\DeclareMathOperator{\rad}{rad}
\DeclareMathOperator{\corad}{corad}
\DeclareMathOperator{\tp}{top}
\DeclareMathOperator{\cotop}{cotop}
\DeclareMathOperator{\Hom}{Hom}
\DeclareMathOperator{\uHom}{\underline{Hom}}
\DeclareMathOperator{\Ext}{Ext}
\DeclareMathOperator{\End}{End}
\DeclareMathOperator{\uEnd}{\underline{End}}
\DeclareMathOperator{\Aut}{Aut}
\DeclareMathOperator{\inj}{inj}
\DeclareMathOperator{\fl}{fl}
\DeclareMathOperator{\CM}{CM}
\DeclareMathOperator{\uCM}{\underline{CM}}
\DeclareMathOperator{\supp}{supp}
\DeclareMathOperator{\Irr}{Irr}
\DeclareMathOperator{\len}{length}
\newcommand{\shift}[2]{\ifstrequal{#2}{1}{\Sigma #1}{\Sigma^{#2}#1}}
\title[$0$-CY Configurations and Finite AR Quivers of Gorenstein Orders]{$0$-Calabi-Yau Configurations and Finite Auslander-Reiten Quivers of Gorenstein Orders}
\author[X. Luo]{Xueyu Luo}
\begin{document}
\maketitle
\begin{abstract}
We revisit Wiedemann's classification \cite{Wiedemannorder} of Auslander-Reiten quivers of representation-finite Gorenstein orders in terms of a Dynkin diagram, a configuration and an automorphism group. 
In this paper, we introduce the notion of $2$-Brauer relations and prove that Wiedemann's configurations are simply described in terms of $2$-Brauer relations.
We also give a simple self-contained proof of Wiedemann's classification.
\end{abstract}

\section{Introduction}
Representation theory of orders is classical and has been studied by a lot of authors, see e.g. \cite{FMO,RT,CR2,greenreiner,Reinermaximal,roggenkamp882, RoHulattices, RW05, POS}.
It deals with the categories of Cohen-Macaulay modules (lattices) on orders.
One of the traditional subjects is the classification of orders which are representation-finite in the sense that they have only finitely many indecomposable Cohen-Macaulay modules.
The classification was given for many important classes, e.g. commutative orders \cite{DR67, GK85, HJ67} (see also \cite[Chapter 9]{CM}), local orders \cite{DK2,HN3}, tiled orders \cite{POS,ZK} and B\"ackstr\"om orders \cite{RRorder}.
The important class of Gorenstein orders is an analogue of both finite dimensional self-injective algebras and commutative Gorenstein rings. 
The representation-finite Gorenstein orders were studied by Wiedemann \cite{Wiedemannorder} and Roggenkamp \cite{Ro86} (see also \cite{Wie86local, Wie86An}).
Wiedemann's work is an analogue of Riedtmann's work on representation-finite selfinjective algebras \cite{BLR81,Riedtmannstructure, riedtmannA, riedtmannD}.

One of the aims of this paper is to improve Wiedemann's classification of Auslander-Reiten quivers of representation-finite Gorenstein orders.
A key notion introduced by Wiedemann is configurations, which is similar but different from Riedtmann's configurations \cite{riedtmannA} appearing in classification of representation-finite self-injective algebras. He described the Auslander-Reiten quiver of a representation-finite Gorenstein order in terms of a Dynkin diagram $\Delta$, a configuration $C$ in $(\ZZ\Delta)_0$ and an automorphism group $G$ of $\ZZ\Delta$. 
In this paper, we give a simple proof of the theorem below of Wiedemann \cite{Wiedemannorder} (see Section  \ref{G-def:translation}, Definitions \ref{G-def:recover} and \ref{G-def:configuration} for $(\ZZ\Delta/G)_C$). 

\begin{theorem}[{Theorem \ref{G-thm:mainthm1}}]
\label{G-th:mainthintro}
Let $R$ be a complete discrete valuation ring and $\Lambda$ be a ring-indecomposable representation-finite Gorenstein $R$-order satisfying the following conditions
             \begin{itemize}
                    \item[(a)] $\rad P$ is indecomposable and non-projective for any indecomposable projective $\Lambda$-module $P$;
                    \item[(b)] $\rad P$ is not isomorphic to $\rad Q$ when the two indecomposable projective $\Lambda$-modules $P, Q$ are nonisomorphic. 
             \end{itemize}
Then the Auslander-Reiten quiver $\mathfrak{A}(\Lambda)$ of $\CM\Lambda$ is isomorphic to $(\ZZ\Delta/G)_ C,$ where $\Delta$ is a Dynkin diagram, $G\subset \Aut(\ZZ\Delta)$ is a weakly admissible group and $ C$ is a configuration of $\ZZ\Delta/G$.
\end{theorem}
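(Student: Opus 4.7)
The plan is to mirror Riedtmann's structure theorem for representation-finite self-injective algebras, adapted to the Cohen-Macaulay setting. Since $\Lambda$ is a Gorenstein order, $\CM\Lambda$ is a Frobenius exact category, so projective and injective objects coincide and the stable category $\uCM\Lambda$ is triangulated with AR translate $\tau$. Conditions (a) and (b) force each indecomposable projective-injective $P$ to occupy a highly controlled position in $\mathfrak{A}(\Lambda)$: it has $\rad P$ as its unique immediate predecessor and (by Nakayama) $P/\soc P$ as its unique immediate successor, and distinct $P$'s give distinct radicals. This is precisely the input needed to separate \emph{configuration data} from \emph{stable data}.

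First I would form the stable Auslander-Reiten quiver $\underline{\mathfrak{A}}(\Lambda)$ by deleting the projective-injective vertices and their incident arrows from $\mathfrak{A}(\Lambda)$. Using (a), what remains is a bona fide stable translation quiver, and it is connected with only finitely many $\tau$-orbits (by ring-indecomposability and representation-finiteness of $\Lambda$). Riedtmann's structure theorem then yields an isomorphism of stable translation quivers
\[
\underline{\mathfrak{A}}(\Lambda)\ \cong\ \ZZ\Delta / G
\]
for a connected valued quiver $\Delta$ and a weakly admissible subgroup $G\subset\Aut(\ZZ\Delta)$. Finiteness of the vertex set of $\underline{\mathfrak{A}}(\Lambda)$ then forces $\Delta$ to be a Dynkin diagram, via the standard subadditive-function / Perron-Frobenius argument applied to the dimensions of $\Hom$-spaces along a $\tau$-orbit.

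It remains to recover the full $\mathfrak{A}(\Lambda)$ from $\ZZ\Delta/G$ by reinserting the projective-injective vertices. By (a)-(b), the rule $P\mapsto[\rad P]$ embeds the set of projective-injective isomorphism classes into $(\ZZ\Delta/G)_0$; let $C$ be its image. One then checks that $\mathfrak{A}(\Lambda)$ identifies with $(\ZZ\Delta/G)_C$ in the sense of Definition \ref{G-def:recover}, so that the remaining point is the verification that $C$ satisfies the configuration axioms of Definition \ref{G-def:configuration}. I expect this last verification to be the main obstacle: it amounts to translating into purely combinatorial language the constraints imposed on $C$ by the almost split sequences
\[
0\ \to\ \rad P\ \to\ P\oplus(\rad P/\soc P)\ \to\ P/\soc P\ \to\ 0
\]
attached to each projective-injective $P$, together with the Nakayama permutation $P\mapsto P/\soc P$ on projective-injectives. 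This combinatorial translation is the heart of the argument and the place where the notion of $2$-Brauer relation introduced earlier in the paper is likely to serve as the conceptual bridge to the definition of a configuration.
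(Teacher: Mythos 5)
Your proposal reproduces the paper's skeleton correctly: both arguments first invoke the Riedtmann-type structure theorem (Theorem \ref{G-th:structure}) to get $\underline{\mathfrak{A}}(\Lambda)\cong\ZZ\Delta/G$ with $\Delta$ Dynkin, then set $C=\{\rad P\mid P\in\ind(\proj\Lambda)\}$ and use (a), (b) together with the fact that $\rad P\to P$ and $P\to\corad P$ are the unique arrows at $P$ to identify $\mathfrak{A}(\Lambda)$ with $(\ZZ\Delta/G)_C$. Up to that point you and the paper agree.

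The genuine gap is that you declare the verification of the configuration axioms to be ``the heart of the argument'' and then do not supply it, and the route you gesture at would not work. Conditions (C1) and (C2) of Definition \ref{G-def:configuration} are statements about the combinatorial function $h$ on the translation quiver, not about the shape of the almost split sequences at the projectives; moreover your proposed sequence $0\to\rad P\to P\oplus(\rad P/\soc P)\to P/\soc P\to 0$ does not even make sense for lattices, since $\soc P=0$ for any Cohen--Macaulay module over an order (the paper uses $\corad P=\Dr(\rad(\Dr P))$ instead), and $2$-Brauer relations play no role here --- they only enter later, in the type-by-type classification of configurations. What is actually needed, and what occupies Sections \ref{G-s:categorical}--\ref{G-s:combinatorial} of the paper, is: (i) the categorical statement that every non-projective $X\in\ind(\CM\Lambda)$ satisfies $\uHom_\Lambda(X,\rad P)\neq 0$ for $P$ its injective hull (Theorem \ref{G-pro:c1}, an argument with injective hulls in the Frobenius category); (ii) the computation of $\len_{\End_\Lambda(\rad P)}\uHom_\Lambda(\rad P,\rad Q)$ in all four cases according to whether $P\cong Q$ and $P\cong\nu^{-1}Q$ (Theorem \ref{G-pro:c2}, which requires the exact sequence of Lemma \ref{G-lem:seqforlength} and the diagram lemmas around \eqref{G-basic}); (iii) the dictionary $\len_{\End_\Lambda(Y)}\uHom_\Lambda(Y,X)=h_{\underline{\mathfrak{A}}(\Lambda)}(Y,X)$ translating these categorical facts into the combinatorics of $h$ (Theorem \ref{G-thm:handhom}, via the results of \cite{tauone}); and (iv) the identification $\omega X=\nu^{-1}X$ so that the case distinction in (ii) matches the case distinction in (C2). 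None of these steps is routine, and without them the claim that $C$ is a configuration is unsupported.
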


There are very few cases where the conditions (a) and (b) are not satisfied. 
In fact, as is shown by Roggenkamp \cite[Remark 1]{Ro86}, $\Lambda$ is classified in this case as a special B\"ackstr\"om-order \cite{RRorder}.
Note that Wiedemann proved a partial converse of Theorem \ref{G-th:mainthintro}.

We point out that Wiedemann's configurations are {\textquotedblleft}{$0$-Calabi-Yau}{\textquotedblright} in the sense that they are preserved by the Serre functor $S$ in the stable category $\uCM\Lambda$, while Riedtmann's configurations are {\textquotedblleft}{$(-1)$-Calabi-Yau}{\textquotedblright} since they are preserved by $S \circ [1]$.
Note that a variant of Wiedemann's and Riedtmann's configurations appeared recently in cluster tilting theory, where $n$-cluster tilting objects in $n$-cluster categories correspond bijectively to $n$-Calabi-Yau configurations in the derived categories \cite{BMRRT} (see \cite[Section 4.2.1]{Iyama07} for details).
\begin{center}
\begin{tabular}{|c|c|c|}
\hline
Riedtmann's configuration & Wiedemann's configuration &$n$-cluster tilting object ($n\ge 2$)\\
\hline
$(-1)$-Calabi-Yau &$0$-Calabi-Yau &$n$-Calabi-Yau\\
\hline
\end{tabular}
\end{center}

For type $A$ case, Wiedemann described configurations in terms of Brauer relations with \emph{Stra{\ss}eneigenschaft} \cite[Satz, p. 47]{Wiedemannorder}.
In this paper, we simplify his description by introducing a much simpler notion of \emph{$2$-Brauer relations} and give nice correspondences with Wiedemann's configurations:
Let $n \ge 1$ be an integer.
We denote by $\mathbf{B}_n^2$ the set of all $2$-Brauer relations of rank $n$ (see Definition \ref{G-def:Brauer}) and by $\mathbf{C}(A_{n+1})$ the set of configurations of $\ZZ A_{n+1}$.
Then our first result is the following.
\begin{theorem}[{Theorem \ref{G-thm:correspondenceA}}]
\label{G-th:mainAintro}
We have the following one-to-one correspondence
$$ \mathbf{C}(A_{n+1})  \xlongleftrightarrow{1-1}  \mathbf{B}_n^2. $$
\end{theorem}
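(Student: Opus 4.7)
The plan is to construct the bijection explicitly in both directions, using the standard combinatorial coordinatisation of $(\ZZ A_{n+1})_0$ in which each vertex is labelled by a pair of integers (equivalently, by an arc in an $(n+3)$-gon). Under such a labelling, a subset $C$ of $(\ZZ A_{n+1})_0$ is specified by the collection of pairs attached to its vertices, and the actions of $\tau$ and of the Serre functor become simple coordinate shifts. By Definition \ref{G-def:Brauer}, a $2$-Brauer relation of rank $n$ is itself a combinatorial object built out of pairs on a set of size essentially $n$, so both sides of the sought bijection are naturally presented as sets of pairs. The bijection will be obtained by showing that, once the coordinates on $(\ZZ A_{n+1})_0$ are chosen correctly, these two sets of pairs agree.

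First I would fix the explicit coordinatisation of $(\ZZ A_{n+1})_0$ and define the forward map $\Phi \colon \mathbf{C}(A_{n+1}) \to \mathbf{B}_n^2$ by sending a configuration $C$ to the set of coordinate pairs of its vertices. The verification that $\Phi(C)$ is a $2$-Brauer relation amounts to translating the configuration axioms---that $C$ is a maximal subset satisfying the Ext-vanishing/Hom-vanishing condition in $\uCM\Lambda$ and is stable under the Serre functor $S$---into the combinatorial conditions defining $\mathbf{B}_n^2$. Since the Serre functor in the $0$-Calabi-Yau situation preserves individual vertices of a configuration (in contrast to Riedtmann's $(-1)$-Calabi-Yau case, where it composes with the shift), this stability should directly yield the symmetry condition intrinsic to a $2$-Brauer relation.

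Next I would define the inverse $\Psi \colon \mathbf{B}_n^2 \to \mathbf{C}(A_{n+1})$ by the converse recipe: given $b \in \mathbf{B}_n^2$, let $\Psi(b)$ be the subset of $(\ZZ A_{n+1})_0$ whose vertices carry the pairs prescribed by $b$. The identities $\Phi \circ \Psi = \mathrm{id}$ and $\Psi \circ \Phi = \mathrm{id}$ are then immediate from the construction, so the only real content is to check that $\Psi(b)$ indeed satisfies the axioms of a configuration for every $b \in \mathbf{B}_n^2$.

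The main obstacle I expect is precisely this axiom matching: showing that the combinatorial $2$-Brauer condition is equivalent to the vanishing/maximality/Serre-stability condition defining a configuration. This will require a careful case analysis of when two vertices of $(\ZZ A_{n+1})_0$ admit a nonzero Hom or $\Ext^1$ in $\uCM\Lambda$, expressed purely in terms of their coordinates, so that the non-crossing (or analogous) combinatorial condition built into a $2$-Brauer relation can be read off directly from the configuration axioms and vice versa. Boundary effects in a fundamental domain for $\ZZ A_{n+1}$, together with the subtle distinction between the $0$-Calabi-Yau and $(-1)$-Calabi-Yau normalisations, are the places where I expect the bookkeeping to be most delicate, and where most of the work of the proof will concentrate.
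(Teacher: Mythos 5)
Your overall strategy coincides with the paper's: coordinatise $(\ZZ A_{n+1})_0$ by pairs, send a configuration to the set of (unordered) pairs carried by its vertices, define the inverse by the same recipe via $\sigma_B$, and reduce everything to knowing, in coordinates, when one vertex ``sees'' another. The paper labels vertices by $[i\ j]$ with $i,j\in\ZZ/n\ZZ$, observes $\omega([j\ i])=[i\ j]$, and computes that $H([j\ i])$ is exactly the rectangle with corners $[j\ i],[j\ j],[i\ j],[i\ i]$; from this, (C2) forbids both a third element in an equivalence class and two crossing chords, while (C1) is what forces $\Phi\Psi=\Id$. This matches the ``case analysis in coordinates'' you anticipate as the main work.

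One correction to how you frame the axiom matching. You propose to verify the configuration axioms as ``maximal subset satisfying the Ext-/Hom-vanishing condition in $\uCM\Lambda$, stable under the Serre functor.'' But $\mathbf{C}(A_{n+1})$ is defined by the purely combinatorial conditions (C1) and (C2) of Definition \ref{G-def:configuration}, phrased in terms of the function $h$ and the automorphism $\omega$ on the abstract translation quiver $\ZZ A_{n+1}$ --- not in terms of any order $\Lambda$. The identification of $h$ with lengths of stable Hom-spaces (Theorem \ref{G-thm:handhom}) is only available once an order realising the configuration is given, and the existence of such an order for an arbitrary combinatorial configuration is precisely Wiedemann's converse, which is not proved here; arguing through $\uCM\Lambda$ would therefore be circular or incomplete. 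The fix is harmless but necessary: carry out your case analysis for the combinatorial hammock $H(x)=\{y\mid h(y,x)>0\}$ directly. Likewise, (C1) is a covering condition ($Q_0=\bigcup_{c\in C}H(c)$) rather than a maximality condition, and it is the covering form that is actually used to show $\Phi\circ\Psi=\Id$ (given $[i\ j]\in\Phi\Psi(C)$, covering of $[i\ i]$ produces some $[i\ k]\in C$, and (C2) then forces $k=j$). Finally, note that for type $A$ a $2$-Brauer relation carries no rotational symmetry requirement; the $\omega$-stability of $C$ only ensures that $[i\ j]\in C$ iff $[j\ i]\in C$, i.e.\ that the associated relation is well defined on unordered pairs.
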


In order to describe all the configurations for other Dynkin diagrams, we also define \emph{symmetric $2$-Brauer relations} of rank $2n$ and \emph{crossing $2$-Brauer relations} of rank $2n$, and we denote by $\mathbf{B}_{2n}^{2,s}$ and $\mathbf{B}_{2n}^{2,c}$ the sets of these two kinds of Brauer relations respectively (see Definitions \ref{G-def:symmetricBrauer} and \ref{G-def:crossingBrauer}).
Consider  the set $\mathbf{C}(B_{n+1})$ of configurations of $\ZZ B_{n+1}$ and the set $\mathbf{C}(C_{n+1})$ of configurations of $\ZZ C_{n+1}$.
For $\ZZ D_{n+2}$, we consider a partition $\mathbf{C}( D_{n+2})=\mathbf{C}^1(D_{n+2}) \sqcup \mathbf{C}^2(D_{n+2})$ (see Section \ref{G-s:typeD} for details).
Then we give theorems to describe all the configurations.
\begin{theorem}[{Theorems \ref{G-thm:correspondenceB}, \ref{G-thm:correspondenceC}, \ref{G-thm:correspondenceD} and \ref{G-th:exceptional}}]
\label{G-th:mainBCDintro}
We have the following one-to-one correspondences
 \begin{align*}
   \mathbf{C}(B_{n+1})  &\xlongleftrightarrow{1-1}  \mathbf{B}_{2n}^{2,s}, \\
 \mathbf{C}(C_{n+1})  &\xlongleftrightarrow{1-1}  \mathbf{B}_{2n}^{2,s}, \\
  \mathbf{C}^1(D_{n+2})  &\xlongleftrightarrow{1-1}  \mathbf{B}_{2n}^{2,s}, 
 \end{align*}
 and a two-to-one correspondence 
  \begin{align*} 
  \mathbf{C}^2(D_{n+2})  \xlongleftrightarrow{2-1}  \mathbf{B}_{2n}^{2,c}.
   \end{align*} 
   For types $E$, $F$ and $G$, we have a complete list of configurations (see Section \ref{G-s:appendix}).
   \end{theorem}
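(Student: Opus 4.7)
My plan is to leverage Theorem \ref{G-th:mainAintro} via folding constructions that relate $\ZZ B_{n+1}$, $\ZZ C_{n+1}$, and $\ZZ D_{n+2}$ to $\ZZ A_{2n+1}$, reducing each statement to the enumeration of suitably-symmetric configurations in type $A$ that are already classified via $2$-Brauer relations.

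First I would set up the folding map $\pi\colon \ZZ A_{2n+1} \to \ZZ B_{n+1}$ (and similarly for $C_{n+1}$) coming from the order-two automorphism $\sigma$ of the $A_{2n+1}$ Dynkin diagram. Then I would show that configurations of $\ZZ B_{n+1}$ lift uniquely to $\sigma$-invariant configurations of $\ZZ A_{2n+1}$; this requires checking that the defining conditions of a configuration (involving $\tau$-orbits and Hom-vanishing in the mesh category) descend along $\pi$ and conversely that any configuration of $\ZZ B_{n+1}$ pulls back to a valid configuration of $\ZZ A_{2n+1}$. Applying Theorem \ref{G-th:mainAintro}, such $\sigma$-invariant configurations correspond exactly to $\sigma$-invariant $2$-Brauer relations of rank $2n$, and a direct unpacking of Definition \ref{G-def:symmetricBrauer} identifies these with $\mathbf{B}_{2n}^{2,s}$. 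The same argument handles $C_{n+1}$ (differing only in the orientation of the valued arrow) and, after a parallel folding, the subset $\mathbf{C}^1(D_{n+2})$ consisting of configurations of $\ZZ D_{n+2}$ behaving symmetrically with respect to the arm-swapping involution $\iota$ of $D_{n+2}$.

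For the subset $\mathbf{C}^2(D_{n+2})$, the involution $\iota$ acts freely, so such configurations come in $\iota$-orbits of size $2$. Factoring through a folding $\ZZ D_{n+2}/\langle\iota\rangle \dashrightarrow \ZZ A_{2n+1}/\langle\sigma\rangle$ and composing with the type-$A$ bijection, each $\iota$-orbit of non-symmetric $D$-configurations maps to a single crossing $2$-Brauer relation, producing the asserted $2$-to-$1$ correspondence with $\mathbf{B}_{2n}^{2,c}$. For the exceptional types $E_6, E_7, E_8, F_4, G_2$, the translation quiver $\ZZ\Delta$ has only finitely many $\tau$-orbits and each configuration is a finite subset satisfying explicit mesh-compatibility conditions, so a direct finite enumeration (as recorded in Section \ref{G-s:appendix}) suffices.

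The hard part will be setting up the folding correspondences and checking that the notion of configuration is preserved in both directions; in particular for type $D$, distinguishing $\mathbf{C}^1$ from $\mathbf{C}^2$ in a way that matches the symmetric-versus-crossing dichotomy of Brauer relations, and confirming that the map $\mathbf{C}^2(D_{n+2}) \to \mathbf{B}_{2n}^{2,c}$ is genuinely $2$-to-$1$ rather than accidentally $1$-to-$1$ on some stratum. For the exceptional cases, the main difficulty is combinatorial bookkeeping rather than conceptual: one must carefully enumerate all configurations of $\ZZ E_6, \ZZ E_7, \ZZ E_8, \ZZ F_4, \ZZ G_2$ without repetition or omission, which is tedious but mechanical once the type-$A$ framework is in hand.
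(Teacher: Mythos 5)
Your folding strategy is a genuinely different route from the paper, which never folds: instead it labels the vertices of $\ZZ B_{n+1}$, $\ZZ C_{n+1}$ and $\ZZ D_{n+2}$ directly by pairs in $\ZZ/2n\ZZ\times\ZZ/2n\ZZ$ (Figures \ref{G-fig:labelB}--\ref{G-parametrizeD}), computes $\omega$ and the supports $H(c)$ explicitly in each type, and verifies (C1), (C2) of Definition \ref{G-def:configuration} by hand for the maps $\Phi$ and $\Psi$, exactly as in the type $A$ proof of Theorem \ref{G-thm:correspondenceA}. Your approach would, if completed, give a more conceptual explanation of why \emph{symmetric} relations appear, but as written it has a gap at precisely the point where the work lies. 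The diagram involution $\sigma$ of $A_{2n+1}$ fixes the middle vertex, so the induced automorphism of $\ZZ A_{2n+1}$ fixes the middle $\tau$-orbit pointwise; the subgroup $\langle\sigma\rangle$ is therefore \emph{not} weakly admissible, and the only quotient construction available in the paper (Subsection \ref{G-def:translation}) does not apply. Consequently neither Lemma \ref{G-lem:hequation} nor Proposition \ref{G-pro:covering} -- the tools that transport the conditions (C1), (C2) along a covering -- can be invoked: along the fixed locus the valuations change to $(1,2)$ or $(2,1)$, the recursion defining $h$ in Definition \ref{G-def:hmor} changes with them, and the clean orbit-sum formula $h_{Q/G}(\pi y,\pi x)=\sum_{y'\in Gy}h_Q(y',x)$ is exactly what must be re-proved (or corrected) for such an orbifold quotient. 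The same issue recurs for the folding $\ZZ D_{n+2}\to\ZZ B_{n+1}$. Until that descent lemma is established, the assertion that configurations of $\ZZ B_{n+1}$ lift to $\sigma$-invariant configurations of $\ZZ A_{2n+1}$ and conversely is unsupported, and it is the entire content of the three $1$--$1$ correspondences.

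Two smaller points. First, for $\mathbf{C}^2(D_{n+2})$ you should check that the global arm-swapping involution agrees with the paper's involution $(-)^*$ of \eqref{G-def:involution}, which flips the signs only on the four vertices of the form $[i\ \ i{+}n]_\pm$; this does hold (those are the only signed vertices a configuration in $\mathbf{C}^2$ contains), but it is an assertion to verify, not a definition. Second, your treatment of the exceptional types starts from a false premise: a configuration of $\ZZ\Delta$ is never a finite subset, since (C1) forces every vertex of the infinite quiver $\ZZ\Delta$ to lie in some $H(c)$ and each $H(c)$ is finite. The enumeration becomes finite only after one proves that every configuration is invariant under a specific power of $\tau$ (e.g.\ $\tau^5\rho$ for $E_6$, $\tau^8$ for $E_7$, $\tau^{14}$ for $E_8$) and then applies Proposition \ref{G-pro:covering} to pass to the finite quotient, which is how Section \ref{G-s:appendix} proceeds.
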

   
   Using this and computer program, we show the following theorem.
   \begin{theorem}[{Propositions \ref{pro:cardinalA}, \ref{pro:cardinalBC}, \ref{pro:cardinalD} and Section \ref{G-s:appendix}}]
   The number of configurations is given by the following table,
    \begin{center}
\begin{tabular}{|c|c|c|c|c|c|c|c|c|}
\hline
$A_{n+1}$&$B_{n+1}$& $C_{n+1}$ &$D_{n+2}$& $E_6$ & $E_7$ &$E_8$ & $F_4$ &$G_2$\\
\hline
$M(n)$&$M^s(n)$& $M^s(n)$ &$M^s(n)+M^c(n)$&$77$&$346$ &$1892$ &$25$ &$4$\\
\hline
\end{tabular}
\end{center}
where $M(n)$ is given by the recursive formula 
$$M(0)=1, M(1)=1, M(n)=M(n-1)+ \sum_{i=0}^{n-2}M(i) M(n-2-i) \for n\ge2,$$ 
the number $M^s(n)$ is given by the recursive formula 
$$M^s(0)=1, M^s(1)=2, M^s(n)=M^s(n-1)+M(n-1)+ 2\sum_{i=0}^{n-2}M(i) M^s(n-2-i) \for n\ge2,$$ 
the number $M^c(n)$ is given by the recursive formula 
$$M^c(0)=1, M^c(1)=1, M^c(n)= \sum_{i=1}^{n-1} \sum_{j=i+1}^{n}M(j-i-1) M(n+i-j-1) \for n\ge2.$$
\end{theorem}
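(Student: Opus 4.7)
The plan is to reduce everything to the bijections of Theorem \ref{G-th:mainBCDintro} and Theorem \ref{G-th:mainAintro}, which convert the problem of counting configurations into the combinatorial problem of counting (symmetric or crossing) $2$-Brauer relations of the appropriate rank. Once this reduction is in place, the proof splits into two parts: establishing the three recurrences for $M(n)$, $M^s(n)$, $M^c(n)$ by direct combinatorial arguments on $2$-Brauer relations, and treating the finitely many exceptional Dynkin types by exhaustive enumeration.

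First I would analyse $\mathbf{B}_n^2$. By the very form of the recursion $M(n)=M(n-1)+\sum_{i=0}^{n-2}M(i)M(n-2-i)$ (which is the Motzkin recurrence), the proof should proceed by a dichotomy on a distinguished endpoint, say the rightmost point of the support: either it is unpaired in the $2$-Brauer relation, yielding an element of $\mathbf{B}_{n-1}^2$ and contributing the summand $M(n-1)$; or it is paired with some earlier point, which (because $2$-Brauer relations satisfy the non-crossing property built into Definition \ref{G-def:Brauer}) splits the remaining support into an inner and outer part, independent $2$-Brauer relations on these pieces accounting for the factor $M(i)M(n-2-i)$. With this bijection established, the count for $A_{n+1}$ follows from Theorem \ref{G-th:mainAintro}.

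For $M^s(n)$ I would parametrise symmetric $2$-Brauer relations of rank $2n$ by what happens at the axis of symmetry. There are three cases: (i) the two central points form a paired arch, in which case the remaining data is an arbitrary $2$-Brauer relation on one half (contributing $M(n-1)$); (ii) the two central points are individually unmatched, in which case removing them leaves a symmetric relation of rank $2(n-1)$ (contributing $M^s(n-1)$); (iii) a central point is matched to a non-central partner, whose symmetric image gives a second arch, producing two flanking regions independently filled and an inner region whose contribution must be symmetric — summing over the position of the arch and accounting for the left/right choice yields the factor $2\sum_{i=0}^{n-2}M(i)M^s(n-2-i)$. The formula for $M^c(n)$ is proved similarly by fixing the unique innermost crossing pair of arches $(i,j)$ of a crossing relation and enumerating independent non-crossing $2$-Brauer relations in each of the regions cut out by these two arches; the double sum over $i<j$ records the position of this canonical innermost crossing. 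The counts for $B_{n+1}, C_{n+1}$ then come from the bijection to $\mathbf{B}_{2n}^{2,s}$, while for $D_{n+2}$ one adds $|\mathbf{C}^1(D_{n+2})|=M^s(n)$ to $|\mathbf{C}^2(D_{n+2})|$ as computed via the two-to-one correspondence with $\mathbf{B}_{2n}^{2,c}$.

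The exceptional types $E_6, E_7, E_8, F_4, G_2$ admit only finitely many candidate subsets of $(\ZZ\Delta)_0$ of the required size, so the configuration conditions can be verified by direct computer enumeration, yielding the listed numbers $77, 346, 1892, 25, 4$. The main obstacle is the case analysis for $M^s(n)$ and $M^c(n)$: one must carefully identify canonical features (the central axis, the innermost crossing pair) in order to avoid double-counting and to ensure that each piece of the decomposition is itself a valid $2$-Brauer relation of the appropriate type. Once this canonicity is secured, the recursions drop out transparently and the rest of the theorem is bookkeeping.
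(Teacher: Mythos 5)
Your proposal is correct and follows essentially the same route as the paper: both reduce the count to the $2$-Brauer relations via the bijection theorems, establish the Motzkin-type recurrences by anchoring on a distinguished point (resp.\ antipodal pair, resp.\ the unique pair of crossing diameters) and splitting the disk into independent regions, and dispose of types $E$, $F$, $G$ by computer enumeration. The only differences are cosmetic (you anchor the $M(n)$ recursion at the rightmost point where the paper uses the point $1$).
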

By calculation, we have the following list of values of $M(n)$, $M^s(n)$ and $M^c(n)$ for $0 \le n \le 10$.
\begin{center}
\begin{tabular}{|c|c|c|c|c|c|c|c|c|c|c|c|}
\hline
&$0$&$1$& $2$ &$3$& $4$ & $5$ &$6$ & $7$ &$8$& $9$ &$10$\\
\hline
$M(n)$&$1$& $1$ &$2$&$4$&$9$ &$21$ &$51$ &$127$&$323$&$835$&$2188$\\
\hline
$M^s(n)$&$1$& $2$ &$5$&$13$&$35$ &$96$ &$267$ &$750$&$2123$&$6046$&$17303$\\
\hline
$M^c(n)$&$1$& $1$ &$1$&$3$&$10$ &$30$ &$90$ &$266$&$784$&$2304$&$6765$\\
\hline
\end{tabular}
\end{center}
By an immediate induction, we have the following formulas (see \cite{Motzkin1, Motzkin}).
\begin{align*}
M(n) & =\sum_{k=0}^{\lfloor n/2 \rfloor} \frac{n!}{(n-2k)!(k+1)! k!}, \quad 
M^s(n)=\sum_{k=0}^{\lfloor(n+1)/2\rfloor} \frac{n!(n+1-k)!}{k!(n-k)!k!(n+1-2k)!}, \\
M^c(n) &= \sum_{k=0}^{\lfloor (n-2)/2 \rfloor} \frac{n!}{k! (k+2)!(n-2-2k)!}.
\end{align*}

Before explaining organization of this paper, we explain our strategy to prove Theorem \ref{G-th:mainthintro}. It consists of two parts.
\begin{itemize}
 \item[(A)] We show that $C_\Lambda =\{ \rad P \mid P \in \ind(\proj \Lambda)\}$ is a ``categorical configuration" in the sense that two categorical conditions (Theorem \ref{G-pro:c1} and Theorem \ref{G-pro:c2}) are satisfied.
 
 \item[(B)] By using general results to calculate hom-sets from Auslander-Reiten quivers given in \cite{tauone}, we show that $C_\Lambda$ is a ``combinatorial configuration'' in the sense that two combinatorial conditions (C1) and (C2) (see Definition \ref{G-def:configuration}) are satisfied.
\end{itemize}
Note that our argument is different from Wiedemann's argument since he defined configurations in terms of mesh categories of translation quivers and used his results on existence of covering functors \cite{Wiedemanncovering}.

We organize this paper as follows. In Section \ref{G-s:pre}, we recall some basic definitions and facts. 
In Section \ref{G-s:categorical}, we discuss (A) above.
In Section \ref{G-s:homspaces}, we give a systematic method to calculate the length of hom-sets of the stable category $\uCM\Lambda$ from the Auslander-Reiten quiver $\underline{\mathfrak{A}}(\Lambda)$. 
In Section \ref{G-s:combinatorial}, we discuss (B) above. In this section, we also give a simple proof of Theorem \ref{G-th:mainthintro} by using the conditions (C1), (C2) of combinatorial configurations.
Later in the paper, we prove our main Theorems \ref{G-th:mainAintro} and \ref{G-th:mainBCDintro} case by case: Section \ref{G-s:typeA} for type $A$, Section \ref{G-s:typeBC} for types $B$ and $C$, Section \ref{G-s:typeD} for type $D$.
In the last section, we deal with configurations of the exceptional cases: types $E$, $F$ and $G$.

\medskip\noindent{\bf Acknowledgments }
This paper is a part of the author's PhD thesis.
The author would like to show her deep gratitude to her advisor Osamu Iyama for his guidance and valuable discussions. 

\section{Preliminaries}
\label{G-s:pre}
In this section, we recall some definitions and facts that we need later.
We always denote by $f g$ the composition of $X \xrightarrow{f} Y \xrightarrow{g} Z$ when they are maps or functors in this paper.

\subsection{Valued translation quivers}
\label{G-def:translation}
Recall that we do not assume that the residue field of $R$ is algebraically closed. This is quite natural to cover important classes of orders appearing in number theory (e.g. \cite{HiNi92}). Then the Auslander-Reiten quivers of $R$-orders have the structure of \emph{valued} translation quivers.
First, let us recall the definition of valued translation quivers.

  A \emph{valued translation quiver} is a quadruple $Q=(Q_0, d, d', \tau)$, where $Q_0$ is a set, $d$ and $d'$ are maps $d,d': Q_0 \times Q_0 \to \NN_{\ge0}$ and $\tau$ is a bijection from $ Q_0 \smallsetminus Q^p $ to $ Q_0 \smallsetminus Q^i$ for subsets $Q^p$ and $Q^i$ of $Q_0$ such that for any vertices $y \in Q_0$ and $z \in Q_0 \smallsetminus Q^p$, we have $d'_{(\tau z) y}=d_{yz}$.
  Moreover, we assume that there exists a map $a: Q_0 \to \NN_{> 0}$ such that $a_x d_{xy} = d'_{xy} a_y$ for any $x,y \in Q_0$.
We draw $Q$ as a quiver with valued arrows $$x \xrightarrow{(d_{xy},d'_{xy})}  y$$
if $d_{xy} \neq 0$.
We denote a valued arrow $x\xrightarrow{(1,1)}  y$ by $x\xrightarrow{} y$.
  If $Q^p=Q^i = \varnothing$, then $Q$ is called \emph{stable}.
  
  An \emph{automorphism} $g:Q \to Q$ of a valued translation quiver $Q$ is a bijection $g: Q_0 \to Q_0$ such that $g(Q^p)= Q^p$, $g(Q^i)=Q^i$, $g \tau = \tau g$, $d_{gxgy}=d_{xy}$ and $d'_{gxgy}=d'_{xy}$ hold for any $x,y \in Q_0$.
We denote by $\Aut(Q)$ the automorphism group of $Q$. 
Following \cite{DE87}, we call a subgroup $G \subset \Aut(Q)$ \emph{weakly admissible}  if for each $g \in G \smallsetminus \{1\}$ and $x \in Q_0$, 
we have $x \neq gx$ and $x^+ \cap (gx)^+=\varnothing$, where $x^+$ is the set of direct successors of $x$. 
Although the condition $x \neq gx$ is not assumed in \cite{DE87}, it is more reasonable to assume it. Note that if $x^+ \neq \varnothing$ and $x^+ \cap (gx)^+=\varnothing$, then $x \neq gx$ holds automatically.

For a weakly admissible subgroup $G \subset \Aut(Q)$, the quotient valued translation quiver $Q/G$ is defined as follows.
The set $(Q/G)_0$ of vertices is $Q_0/G=\{Gx \mid x \in Q_0 \}$. 
Moreover, $(Q/G)^p = Q^p/G$, $(Q/G)^i = Q^i/G$ and $\tau$, $d$, $d'$ are defined by the following commutative diagrams:
\begin{center}
\begin{tikzcd}
Q_0 \smallsetminus Q^p \arrow{r}[sloped, above]{\tau} \arrow{d} & Q_0 \smallsetminus Q^i \arrow{d}\\
(Q/G)_0 \smallsetminus (Q/G)^p \arrow{r}[sloped, above]{\tau} & (Q/G)_0 \smallsetminus (Q/G)^i,
\end{tikzcd}
\begin{tikzcd}
Q_0 \times Q_0 \arrow{rr}[sloped, above]{d \ (\resp d')} \arrow{d} && \NN_{\ge 0}\arrow[equal]{d}\\
(Q/G)_0 \times (Q/G)_0 \arrow{rr}[sloped, above]{d \ (\resp d')} && \NN_{\ge 0},
\end{tikzcd}
\end{center}
where the vertical maps are natural surjections.

\vspace{0.5cm}
Assume that $\Delta$ is a Dynkin diagram (see Figure \ref{G-fig:dynkin}).
\begin{figure}
\begin{center}
\begin{tikzcd}[row sep=tiny]
A_n: \bullet \arrow[dash]{r} & \bullet \arrow[dash]{r} & \bullet \arrow[dash]{r}&\arrow[dotted,-]{r}&\arrow[dash]{r}&\bullet \arrow[dash]{r} &\bullet \\
B_n:  \bullet \arrow[dash]{r}[sloped, above]{(1,2)} & \bullet \arrow[dash]{r} & \bullet \arrow[dash]{r}&\arrow[dotted,-]{r}&\arrow[dash]{r}&\bullet \arrow[dash]{r} &\bullet \\
C_n:  \bullet \arrow[dash]{r}[sloped, above]{(2,1)} & \bullet \arrow[dash]{r} & \bullet \arrow[dash]{r}&\arrow[dotted,-]{r}&\arrow[dash]{r}&\bullet \arrow[dash]{r} &\bullet \\
&&&&&&\bullet \\
D_n:  \bullet \arrow[dash]{r} & \bullet \arrow[dash]{r} & \bullet \arrow[dash]{r}&\arrow[dotted,-]{r}&\arrow[dash]{r}&\bullet \arrow[dash]{ur} \arrow[dash]{dr}&\\
&&\bullet &&&&\bullet \\
E_6:   \bullet \arrow[dash]{r}&\bullet \arrow[dash]{r}&\bullet \arrow[dash]{r}\arrow[dash]{u}&\bullet \arrow[dash]{r}&\bullet &&\\
&&\bullet &&&&\\
E_7:   \bullet \arrow[dash]{r}&\bullet \arrow[dash]{r}&\bullet \arrow[dash]{r}\arrow[dash]{u}&\bullet \arrow[dash]{r}&\bullet \arrow[dash]{r}&\bullet &\\
&&\bullet &&&&\\
E_8:   \bullet \arrow[dash]{r}&\bullet \arrow[dash]{r}&\bullet \arrow[dash]{r}\arrow[dash]{u}&\bullet \arrow[dash]{r}&\bullet \arrow[dash]{r}&\bullet \arrow[dash]{r}&\bullet \\
F_4:  \bullet \arrow[dash]{r}&\bullet \arrow[dash]{r}[sloped, above]{(1,2)}&\bullet \arrow[dash]{r}&\bullet & &&\\
G_2: \bullet \arrow[dash]{r}[sloped, above]{(1,3)}&\bullet &&&&&
\end{tikzcd}
\end{center}
\caption{Dynkin diagrams with $n$ vertices}
\label{G-fig:dynkin}
\end{figure}
Let $Q$ be a valued quiver with underlying Dynkin diagram $\Delta$. 
We define the \emph{valued translation quiver} $\ZZ Q$ associated to $Q$ as follows. The vertices of $\ZZ Q$ are indexed by $(n,i)$ with $n \in \ZZ$ and $i \in Q_0$. There are arrows $$(n, \alpha): (n,i) \xrightarrow{(d_{ij}, d'_{ij})} (n,j) \and (n, \alpha^*): (n,j) \xrightarrow{(d'_{ij}, d_{ij})} (n+1, i)$$ in $\ZZ Q$ for any arrow $\alpha: i \xrightarrow{(d_{ij}, d'_{ij})} j$ in $Q$, and these are all the arrows in $\ZZ Q$. The translation $\tau$ is defined by $\tau((n,i))= (n-1,i)$. Then $\ZZ Q$ is a stable valued translation quiver.
According to \cite[Chapter I, 5.6]{Happel}, $\ZZ Q$ only depends on the underlying Dynkin diagram $\Delta$. Hence, we denote $\ZZ Q$ by $\ZZ\Delta$.

\vspace{0.5cm}
From a stable translation quiver and a subset of vertices, we can define a new translation quiver as follows.
 \begin{definition}
 \label{G-def:recover}
Let $Q$ be a stable valued translation quiver and $ C$ be a subset of $Q_0$. We define a valued translation quiver $Q_ C$ by adding to $Q$ a vertex $p_c$ and two arrows $c \to p_c \to \tau^{-1}(c)$ for each $c\in  C$. Moreover, the translation of $Q_ C$ coincides with the translation of $Q$ on $Q_0$ and is not defined on $Q_C^p=Q_C^i= \{p_c \mid c \in C\}$.
 \end{definition}
Let us see an example.
\begin{example}
The translation quiver $Q= \ZZ A_5$ is given by the left diagram below.
Moreover, let $C$ be a set of all the marked vertices. By adding new vertices and new arrows requested in Definition \ref{G-def:recover}, we get the new translation quiver $Q_C$.
\begin{center}
$Q=\ZZ A_5$: \begin{tikzcd}[column sep=tiny, row sep=tiny]
&&\bullet \drar & &\node[draw,circle,inner sep=1pt]{\bullet};\drar&&\node[draw,circle,inner sep=1pt]{\bullet}; \drar & &\\
\arrow[dotted,-]{r}&\node[draw,circle,inner sep=1pt]{\bullet};\urar\drar & &\bullet\urar\drar&&\bullet\urar\drar& &\bullet\arrow[dotted,-]{rr}&&\phantom{X}\\
&&\bullet\urar\drar&&\bullet\urar\drar&&\bullet\urar\drar&&\\ 
\arrow[dotted,-]{r}&\bullet\urar\drar & &\bullet\urar\drar &&\node[draw,circle,inner sep=1pt]{\bullet};\urar\drar& &\bullet\arrow[dotted,-]{rr}&& \phantom{X}\\
&&\node[draw,circle,inner sep=1pt]{\bullet}; \urar&&\bullet\urar &&\bullet \urar&&\\
\end{tikzcd}
$Q_C$: \begin{tikzcd}[column sep=tiny, row sep=tiny]
&&\bullet \drar & &\node[draw,circle,inner sep=1pt]{\bullet};\drar\arrow{r}&\bullet\arrow{r}&\node[draw,circle,inner sep=1pt]{\bullet}; \drar\arrow{r} & \bullet\arrow[dotted,-]{rr}&&\phantom{X}\\
\arrow[dotted,-]{r}&\node[draw,circle,inner sep=1pt]{\bullet};\urar\drar\arrow{r} &\bullet\arrow{r} &\bullet\urar\drar&&\bullet\urar\drar& &\bullet\arrow[dotted,-]{rr}&&\phantom{X}\\
&&\bullet\urar\drar&&\bullet\urar\drar&&\bullet\urar\drar&&\\ 
\arrow[dotted,-]{r}&\bullet\urar\drar & &\bullet\urar\drar &&\node[draw,circle,inner sep=1pt]{\bullet};\urar\drar\arrow{r}&\bullet\arrow{r} &\bullet\arrow[dotted,-]{rr}&& \phantom{X}\\
&&\node[draw,circle,inner sep=1pt]{\bullet}; \urar\arrow{r}&\bullet\arrow{r}&\bullet\urar &&\bullet \urar&&\\
\end{tikzcd}
\end{center}

\end{example}

\subsection{Cohen-Macaulay modules}
In this subsection, we introduce basic facts of orders and their Cohen-Macaulay modules.
Throughout this paper, we denote by $R$ a complete discrete valuation ring, for example, $R$ is a formal power series ring in one variable over a field.
 We denote the field of fractions of $R$ by $K$. We recall the definition of orders and Cohen-Macaulay modules first, see \cite{RT,CR2,roggenkamp882, RoHulattices, POS}.

\begin{definition} 
\label{G-def:order}
Let $\Lambda$ be an $R$-algebra and $M$ be a left $\Lambda$-module. We denote by $\mod \Lambda$ the category of finitely generated $\Lambda$-modules.
\begin{enumerate}
 \item $\Lambda$ is called an {\emph{$R$-order}} if it is finitely generated and free as an $R$-module. 

 \item If $\Lambda$ is an $R$-order, then a $\Lambda$-module $M$ is called a {\emph{Cohen-Macaulay $\Lambda$-module}} or \emph{$\Lambda$-lattice} if it is finitely generated and free as an $R$-module. 
      We denote by $\CM\Lambda$ the category of Cohen-Macaulay $\Lambda$-modules. It is a full subcategory of $\mod \Lambda$, which is closed under submodules and extensions. 
    
\item  We define the stable category $\uCM\Lambda$ of $\CM\Lambda$ as follows.
     \begin{itemize}
         \item The objects of $\uCM\Lambda$ are the same as the objects of $\CM\Lambda$. 
          \item For any $X,Y \in \uCM\Lambda$, the hom-set is defined by $$\uHom_\Lambda(X,Y):=\Hom_\Lambda(X,Y)/[\Lambda](X,Y),$$ where $[\Lambda](X,Y)$ is the set of morphisms factoring through a projective $\Lambda$-module.
          \end{itemize}
\end{enumerate}
\end{definition}  

We denote by $\fl\Lambda$ the category of finite length $\Lambda$-modules.
Then $(\fl\Lambda, \CM\Lambda)$ is a torsion pair in $\mod\Lambda$ in the sense that $\Hom_\Lambda(X,Y)=0$ holds for any $X \in \fl\Lambda$ and $Y \in \CM\Lambda$, and moreover, for any $X \in \mod \Lambda$, there exists an exact sequence $0 \to TX \to X \to FX \to 0$ with $TX \in \fl\Lambda$ and $FX \in \CM\Lambda$.
For any $R$-order $\Lambda$ (e.g. $\Lambda=R$), we have the following exact dualities 
\begin{align*}
\Dr & :=\Hom_R(-, R) : \CM\Lambda \leftrightarrow \CM\Lambda^{\operatorname{op}},\\
\Dk & :=\Ext^1_R(-,R): \fl\Lambda \leftrightarrow \fl\Lambda^{\operatorname{op}}.
\end{align*}
We denote by $\proj\Lambda$ the category of projective $\Lambda$-modules.
They are all projective objects in $\CM\Lambda$.
Now let us define the injective objects in $\CM\Lambda$.
\begin{definition}
\label{G-def:injective}
Let $\Lambda$ be an $R$-order. 
We call $I \in \CM\Lambda$ an \emph{injective Cohen-Macaulay} $\Lambda$-module if $I \in \add(\Dr(\Lambda^{\operatorname{op}}))$.
 We denote by $\inj \Lambda$ the category of injective Cohen-Macaulay $\Lambda$-modules. 
 \end{definition}

We have the following easy properties.
\begin{proposition}
\label{G-pro:injectivehull}
Let $\Lambda$ be an $R$-order.
     \begin{itemize}
          \item[(1)] For any $X \in \CM\Lambda$, there exists a short exact sequence 
               \begin{equation}
               \label{G-eq:injectivehull}
               0\to X \xrightarrow{f} Q \to Y \to 0
               \end{equation}
                with $Q\in \inj\Lambda$ and $Y \in \CM\Lambda$.
         \item[(2)] $\inj \Lambda =\{ I \in \CM\Lambda \mid \Ext^1_\Lambda(-, I)|_{\CM\Lambda} =0\}.$
     \end{itemize}
\end{proposition}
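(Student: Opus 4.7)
The plan is to derive both parts from the exact duality $\Dr=\Hom_R(-,R)\colon \CM\Lambda\leftrightarrow \CM\Lambda^{\operatorname{op}}$ recorded just before the statement, together with the defining identity $\inj\Lambda=\add(\Dr\Lambda^{\operatorname{op}})$. The key observation is that $\Dr$ carries $\proj\Lambda^{\operatorname{op}}$ onto $\inj\Lambda$, so projective surjections on the opposite side will dualize into injective embeddings on our side.

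First, to prove (1), I would start with $X\in\CM\Lambda$ and pass to $\Dr X\in\CM\Lambda^{\operatorname{op}}$, which is finitely generated over the $R$-order $\Lambda^{\operatorname{op}}$. Choosing any surjection $\pi\colon P\twoheadrightarrow \Dr X$ with $P\in\proj\Lambda^{\operatorname{op}}$ and setting $K=\ker\pi$, closure of $\CM\Lambda^{\operatorname{op}}$ under submodules gives $K\in\CM\Lambda^{\operatorname{op}}$. Applying $\Dr$ to $0\to K\to P\to \Dr X\to 0$ and using the reflexivity $\Dr\Dr X\cong X$ then produces the desired short exact sequence $0\to X\to \Dr P\to \Dr K\to 0$ with $Q:=\Dr P\in\inj\Lambda$ and cokernel $Y:=\Dr K\in\CM\Lambda$.

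Next, for the ``only if'' direction of (2), by additivity it suffices to treat $I=\Dr\Lambda^{\operatorname{op}}$. Here I would invoke the Hom-tensor adjunction to produce a natural isomorphism
\[
\Hom_\Lambda(-,\Dr\Lambda^{\operatorname{op}})\big|_{\CM\Lambda} \;\cong\; \Dr(-),
\]
whose right-hand side is exact, hence $\Ext^1_\Lambda(-,\Dr\Lambda^{\operatorname{op}})|_{\CM\Lambda}=0$. For the converse, given $I\in\CM\Lambda$ with $\Ext^1_\Lambda(-,I)|_{\CM\Lambda}=0$, part (1) produces $0\to I\to Q\to Y\to 0$ with $Q\in\inj\Lambda$ and $Y\in\CM\Lambda$; the Ext-vanishing forces this sequence to split, so $I$ is a direct summand of $Q\in\add(\Dr\Lambda^{\operatorname{op}})$ and thus lies in $\inj\Lambda$.

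The only technical care needed is the bookkeeping of left versus right $\Lambda$-actions when writing $\Dr\Lambda^{\operatorname{op}}=\Hom_R(\Lambda^{\operatorname{op}},R)$: one must regard $\Lambda^{\operatorname{op}}$ as a right $\Lambda$-module so that its $R$-dual acquires a left $\Lambda$-structure, and set up the Hom-tensor adjunction consistently with this convention. Beyond that, the whole argument is formal and rests only on the duality properties of $\Dr$ together with the fact, recalled in Definition 2.5, that $\CM\Lambda$ is closed under submodules and extensions in $\mod\Lambda$.
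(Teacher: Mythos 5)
Your proof is correct and follows essentially the same route as the paper: part (1) is obtained by dualizing a projective presentation of $\Dr X$ over $\Lambda^{\operatorname{op}}$, and the ``$\supset$'' inclusion of (2) by splitting the sequence from (1). The only cosmetic difference is in the ``$\subset$'' direction, where you use the adjunction isomorphism $\Hom_\Lambda(-,\Dr(\Lambda^{\operatorname{op}}))\cong \Dr(-)$ and its exactness on $\CM\Lambda$, while the paper invokes $\Ext^1_\Lambda(X,Y)\cong\Ext^1_{\Lambda^{\operatorname{op}}}(\Dr Y,\Dr X)$; both are immediate consequences of the same exact duality.
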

\begin{proof}
(1) We have a short exact sequence $0 \to Z \to P \to \Dr X \to 0$ over $\Lambda^{\operatorname{op}}$-modules with $P \in \proj \Lambda^{\operatorname{op}}$ and $Z \in \CM \Lambda^{\operatorname{op}}$. 
Applying the exact duality $\Dr: \CM \Lambda \to \CM\Lambda^{\operatorname{op}}$, we have $ 0 \to X \to \Dr P \to \Dr Z \to 0$ with $\Dr P \in \inj \Lambda$ and $\Dr Z \in \CM\Lambda$.

(2) We get the inclusion \textquotedblleft$\subset$\textquotedblright from the isomorphism $ \Ext^1_\Lambda(X,Y) \cong \Ext^1_{\Lambda^{\operatorname{op}}}(\Dr Y, \Dr X)$ for any $X,Y \in \CM\Lambda$.
The other inclusion \textquotedblleft$\supset$\textquotedblright follows from the exact sequence \eqref{G-eq:injectivehull}.
  \end{proof}

 From now on, we assume that $\Lambda$ is a ring-indecomposable $R$-order. 
 We denote by $\simp \Lambda$ the set of the isomorphism classes of simple $\Lambda$-modules.
For an additive category $\ca$, we denote by $\ind(\ca)$ the set of the isomorphism classes of indecomposable objects in $\ca$. 
 
 Consider the \emph{Nakayama functor} $$\nu:=(\Dr\Lambda)\otimes_\Lambda - \cong \Dr\Hom_\Lambda(-,\Lambda): \CM\Lambda \to \CM\Lambda.$$
 It induces an equivalence $$\nu: \proj\Lambda \xrightarrow{\sim} \inj\Lambda.$$
For any $X\in \CM\Lambda$, let $$ \corad X := \Dr (\rad (\Dr X)) \and  \cotop X := \Dk (\tp (\Dr X)).$$
We have the following proposition to give other bijections. 
\begin{proposition}
\label{G-lem:onetoone}
We have one-to-one correspondences between
\begin{itemize}   
\item[(a)] $\ind(\proj\Lambda)$ 
\item[(b)] $\ind(\inj \Lambda)$ 
\item[(c)] $\simp \Lambda$ 
\item[(a$'$)] $\ind(\proj\Lambda^{\operatorname{op}})$
\item[(b$'$)] $\ind(\inj \Lambda^{\operatorname{op}})$
\item[(c$'$)] $\simp \Lambda^{\operatorname{op}}$.
\end{itemize}
     The bijection $\ind (\proj \Lambda) \to \simp \Lambda$ is given by $P \mapsto \tp P$, the bijection $\ind (\proj \Lambda) \to \ind (\inj \Lambda)$ is given by $P \mapsto \nu P$ and the bijection $\ind (\inj \Lambda) \to \simp \Lambda$ is given by $I \mapsto \cotop I$. Moreover, these bijections commute.
\end{proposition}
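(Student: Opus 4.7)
The plan is to assemble the six bijections from three constructions---the top functor, the Nakayama functor $\nu$, and the $R$-dualities $\Dr$ and $\Dk$---and then to check commutativity by a direct computation. Since $R$ is a complete discrete valuation ring and $\Lambda$ is a finite $R$-algebra, $\Lambda$ is semiperfect, so Nakayama's lemma yields the bijection $\ind(\proj\Lambda)\leftrightarrow\simp\Lambda$ via $P\mapsto\tp P$, and similarly $\ind(\proj\Lambda^{\operatorname{op}})\leftrightarrow\simp\Lambda^{\operatorname{op}}$. The equivalence $\nu\colon\proj\Lambda\xrightarrow{\sim}\inj\Lambda$ recalled just before the statement, and its opposite analogue, restrict to bijections on indecomposables, giving $\ind(\proj\Lambda)\leftrightarrow\ind(\inj\Lambda)$ and $\ind(\proj\Lambda^{\operatorname{op}})\leftrightarrow\ind(\inj\Lambda^{\operatorname{op}})$.

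To link the primed and unprimed versions, I would use the dualities: $\Dr\colon\CM\Lambda\leftrightarrow\CM\Lambda^{\operatorname{op}}$ sends $\proj\Lambda$ to $\inj\Lambda^{\operatorname{op}}$ by the very definition $\inj\Lambda^{\operatorname{op}}=\add(\Dr\Lambda)$, and dually $\inj\Lambda$ to $\proj\Lambda^{\operatorname{op}}$; and $\Dk\colon\fl\Lambda\leftrightarrow\fl\Lambda^{\operatorname{op}}$ preserves simples, since any duality exchanges subobjects and quotients. At this point all six sets are in explicit one-to-one correspondence.

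It then remains to identify the induced bijection $\ind(\inj\Lambda)\leftrightarrow\simp\Lambda$ with the map $I\mapsto\cotop I$ and to check commutativity of the triangle. For $I=\nu P$ with $P\in\ind(\proj\Lambda)$, setting $P^{\ast}=\Hom_{\Lambda}(P,\Lambda)\in\ind(\proj\Lambda^{\operatorname{op}})$ gives $\Dr I=\Dr\Dr P^{\ast}\cong P^{\ast}$, whence
\[
\cotop I=\Dk(\tp(\Dr I))\cong\Dk(\tp P^{\ast}),
\]
so both claims reduce to the single isomorphism $\Dk(\tp P^{\ast})\cong\tp P$.

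The main obstacle, and the subtlest step, is this last identification. Writing $P=\Lambda e$ for a primitive idempotent $e$ yields $\tp P=(\Lambda/\rad\Lambda)e$ and $\tp P^{\ast}=e(\Lambda/\rad\Lambda)$, and the claim becomes that in the semisimple residue algebra $A=\Lambda/\rad\Lambda$ over $k=R/\mathfrak{m}$, the $k$-linear dual $\Hom_{k}(eA,k)$ of the simple right $A$-module $eA$ is isomorphic to the simple left $A$-module $Ae$. Since $\Dk$ restricted to modules killed by $\mathfrak{m}$ agrees with $\Hom_{k}(-,k)$, both $\Dk(eA)$ and $Ae$ are simple left $A$-modules, and Schur's lemma identifies them once a nonzero pairing $Ae\otimes_{A}eA\to k$ is produced, which one obtains by composing the multiplication $Ae\times eA\to eAe$ with any nonzero $k$-linear form on the division ring $eAe$.
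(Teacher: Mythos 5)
Your proof is correct and follows essentially the same route as the paper's: semiperfectness gives (a)$\leftrightarrow$(c) and (a$'$)$\leftrightarrow$(c$'$), and the exact duality $\Dr$ links the primed and unprimed sides. The paper's own proof stops there and leaves the commutativity of the triangle and the identification of the composite with $I\mapsto\cotop I$ implicit, whereas you verify this explicitly by reducing to $\Dk(\tp P^{\ast})\cong\tp P$ in the semisimple residue algebra --- a genuine addition rather than a divergence (the only cosmetic slip being that the multiplication pairing should be written $eA\otimes_A Ae\to eAe$ rather than $Ae\otimes_A eA$).
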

\begin{proof}
Since $\Lambda$ is semiperfect (\emph{i.e.} every finitely generated $\Lambda$-module has a projective cover), the correspondence between (a) (\resp (a$'$)) and (c) (\resp (c$'$)) is classical. Since $\Dr$ is the duality, (a) is in bijection with (b$'$) (\resp (b) is in bijection with (a$'$)).
\end{proof}
 
The following sequence is basic.
\begin{lemma}
For any $X\in \CM\Lambda$, we have a short exact sequence $$ 0 \to X \to \corad X \to \cotop X \to 0.$$
\end{lemma}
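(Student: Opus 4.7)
The plan is to produce the desired exact sequence by dualizing the canonical top-radical sequence for $\Dr X$ over $\Lambda^{\operatorname{op}}$. Starting point: since $\Dr X\in\CM\Lambda^{\operatorname{op}}$, we have the tautological short exact sequence of $\Lambda^{\operatorname{op}}$-modules
\begin{equation*}
0 \to \rad(\Dr X) \to \Dr X \to \tp(\Dr X) \to 0.
\end{equation*}
Observe that $\rad(\Dr X)$ is a submodule of $\Dr X$, and since $R$ is a discrete valuation ring every submodule of a free $R$-module is free, so $\rad(\Dr X)\in \CM\Lambda^{\operatorname{op}}$. Moreover $\tp(\Dr X)$ is semisimple, hence an $R$-torsion module, so $\tp(\Dr X)\in\fl\Lambda^{\operatorname{op}}$.

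Next I apply the contravariant functor $\Hom_R(-,R)$ to this sequence and use the long exact sequence of $\Ext^\bullet_R(-,R)$; since $R$ has global dimension one, this gives
\begin{equation*}
0\to \Hom_R(\tp\Dr X,R)\to \Hom_R(\Dr X,R)\to \Hom_R(\rad\Dr X,R)\to \Ext^1_R(\tp\Dr X,R)\to \Ext^1_R(\Dr X,R).
\end{equation*}
The first term vanishes because $\tp(\Dr X)$ is $R$-torsion, and the last term vanishes because $\Dr X$ is $R$-free. Rewriting using the definitions of $\Dr$ and $\Dk$, together with the biduality $\Dr\Dr X\cong X$ (which holds because $X$ is finitely generated free over $R$), I obtain
\begin{equation*}
0\to X \to \Dr(\rad \Dr X)\to \Dk(\tp \Dr X)\to 0,
\end{equation*}
which is precisely $0\to X\to\corad X\to \cotop X\to 0$ by definition.

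There is no serious obstacle; the only point one must check carefully is that $\rad(\Dr X)\in\CM\Lambda^{\operatorname{op}}$ so that its $R$-dual coincides with its image under $\Dr$, and that $R$-biduality recovers $X$. Both are immediate from $R$ being a (complete) discrete valuation ring together with the fact that $X$ is a lattice.
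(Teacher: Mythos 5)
Your proof is correct and follows essentially the same route as the paper: dualize the sequence $0\to\rad(\Dr X)\to\Dr X\to\tp(\Dr X)\to 0$ via $\Hom_R(-,R)$, and use that the $\Dr$-dual of the torsion module $\tp(\Dr X)$ and the $\Dk$-dual of the free module $\Dr X$ both vanish, together with biduality $\Dr\Dr X\cong X$. The extra checks you record (that $\rad(\Dr X)$ is a lattice and $\tp(\Dr X)$ is of finite length) are sound and only make the argument slightly more explicit than the paper's.
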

\begin{proof}
For $\Dr X \in \CM\Lambda^{\operatorname{op}}$, we consider the exact sequence $0 \to \rad (\Dr X) \to \Dr X \to \tp (\Dr X) \to 0$.
By applying $\Hom_R (-, R)$, we have an exact sequence 
$$0 \to  \Dr(\tp (\Dr X)) \to \Dr(\Dr X) \to \Dr(\rad (\Dr X)) \to \Dk (\tp(\Dr X)) \to \Dk(\Dr X).$$
Since $\Dr(\Dr X)=X$, $\Dr(\tp(\Dr X))=0$ and $\Dk(\Dr X)=0$, we have the desired sequence.
\end{proof}

The following lemma is about the extension group from a simple module to an injective Cohen-Macaulay module.
\begin{lemma} 
\label{G-lem:onedim}
    \begin{itemize}
        \item[(1)] For any $P \in \ind(\proj \Lambda)$ and $S \in \simp\Lambda$, $\Hom_\Lambda(P,S) \neq 0$ if and only if $S=\tp P$. In this case, $\Hom_\Lambda(P,S)$ is a simple $\End_\Lambda(P)$-module and a simple $\End_\Lambda(S)^{\operatorname{op}}$-module.
         \item[(2)] For any $I\in \ind(\inj \Lambda)$ and $S\in \simp\Lambda$, $\Ext^1_\Lambda(S, I)\neq0$ if and only if $S= \cotop I$. In this case, $\Ext^1_\Lambda(S, I)$ is a simple $\End_\Lambda(I)^{\operatorname{op}}$-module and a simple $\End_\Lambda(S)$-module.     
          \item[(3)]  For any $P\in\ind(\proj\Lambda)$ and $I\in \ind(\inj \Lambda)$, $\Ext^1_\Lambda(\tp P,I)\neq 0$ if and only if $\cotop I\cong \tp P$ if and only if $I \cong \nu P$.
     \end{itemize}
\end{lemma}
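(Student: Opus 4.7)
The plan is to handle part~(1) by elementary module-theoretic arguments, to derive part~(2) from a Hom--tensor adjunction involving the Nakayama functor, and to obtain part~(3) formally from part~(2) together with Proposition~\ref{G-lem:onetoone}. For (1), I would first observe that any nonzero map $P \to S$ with $S$ simple has kernel a maximal submodule of $P$, hence contains $\rad P$, so the map factors through $\tp P$. Since $P$ is indecomposable projective, $\tp P$ is simple by Proposition~\ref{G-lem:onetoone}, so Schur's lemma gives $\Hom_\Lambda(P, S) \cong \Hom_\Lambda(\tp P, S)$, which is $\End_\Lambda(S)$ if $S \cong \tp P$ and zero otherwise. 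In the nonzero case one verifies, by a standard projective-cover and Nakayama argument, that $\End_\Lambda(P) \twoheadrightarrow \End_\Lambda(\tp P)$ is surjective with kernel $\rad \End_\Lambda(P)$, so that $\End_\Lambda(P)$ acts on $\Hom_\Lambda(P, S)$ through the residue division ring $\End_\Lambda(S)$ by multiplication and hence simply; the $\End_\Lambda(S)^{\operatorname{op}}$-action is $\End_\Lambda(S)$ acting on itself, likewise simple.

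For (2), I would write $P = \Lambda e$ with $e$ a primitive idempotent so that $\nu P = \Dr(e\Lambda)$, and apply the Hom--tensor adjunction
\[
\Hom_\Lambda\bigl(M, \Hom_R(e\Lambda, R)\bigr) \cong \Hom_R(e\Lambda \otimes_\Lambda M, R) = \Hom_R(eM, R).
\]
Since $e\Lambda$ is $\Lambda^{\operatorname{op}}$-projective and $\Lambda$ is $R$-free, the functor $M \mapsto eM$ is exact and sends a $\Lambda$-projective resolution $P_\bullet \to S$ to an $R$-free resolution $eP_\bullet \to eS$; passing to cohomology in degree one yields
\[
\Ext^1_\Lambda(S, I) \cong \Ext^1_R(eS, R) = \Dk(eS).
\]
Since $\Dk$ is a duality, this is nonzero iff $eS \neq 0$, iff $S \cong \tp P$, and Proposition~\ref{G-lem:onetoone} identifies the latter with $S \cong \cotop I$. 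When $S = \cotop I = \tp P$ one has $eS = S$, so $\Ext^1_\Lambda(S, I) \cong \Dk S$, a simple $\Lambda^{\operatorname{op}}$-module; exactly as in (1), the radicals of the local rings $\End_\Lambda(I)$ and $\End_\Lambda(S)$ act by zero, and their actions factor through the respective residue division rings, giving simplicity on both sides. Part~(3) is then immediate: applying (2) with $S = \tp P$ gives $\Ext^1_\Lambda(\tp P, I) \neq 0 \iff \cotop I \cong \tp P$, and by the commuting bijections of Proposition~\ref{G-lem:onetoone} the latter is equivalent to $I \cong \nu P$.

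The main obstacle I expect is the derived adjunction step in (2): one must verify that $eP_\bullet$ is genuinely an $R$-projective resolution of $eS$ (relying on the flatness of $e\Lambda$ as a right $\Lambda$-module and on every finitely generated projective $\Lambda$-module being $R$-free), and then track the $\End_\Lambda(I)$- and $\End_\Lambda(S)$-actions through the adjunction isomorphism. Once these bookkeeping issues are resolved, both the vanishing criterion and the simplicity of the module structures follow cleanly from the behaviour of $\Dk$ on simple modules.
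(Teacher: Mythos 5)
Your argument is correct in substance, but for part (2) it takes a genuinely different route from the paper. The paper treats (1) as immediate, cites the vanishing criterion in (2) as well known, and proves simplicity by comparing two four-term exact sequences: it applies $\Hom_\Lambda(-,I)$ to $0 \to I \to \corad I \to \cotop I \to 0$ and $\Hom_{\Lambda^{\operatorname{op}}}(\Dr I,-)$ to $0 \to \rad(\Dr I) \to \Dr I \to \tp(\Dr I) \to 0$; since $\Dr$ identifies the first two terms of each sequence, it concludes $\Ext^1_\Lambda(\cotop I, I)\cong \Hom_{\Lambda^{\operatorname{op}}}(\Dr I, \tp(\Dr I))$ and invokes (1) on the opposite side. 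Your adjunction computation $\Ext^1_\Lambda(S,\Dr(e\Lambda))\cong \Dk(eS)$ instead proves the vanishing criterion and the simplicity in one stroke, which makes the proof more self-contained than the paper's (whose iff rests on a citation); the price is exactly the bookkeeping you flag, namely that $e(-)$ carries a $\Lambda$-projective resolution of $S$ to an $R$-projective resolution of $eS$, which does go through because $e\Lambda$ is $\Lambda^{\operatorname{op}}$-projective and finitely generated projective $\Lambda$-modules are $R$-free. One small correction: when $S=\tp P$ you assert $eS=S$, which is false in general ($eS$ is merely an $R$-submodule of $S$); what the adjunction actually gives is $eS\cong\Hom_\Lambda(P,S)\cong\End_\Lambda(S)$, which is precisely what part (1) supplies, and $\Dk$ of this is still one-dimensional over $\End_\Lambda(S)$ and over the residue division ring of $\End_\Lambda(I)\cong\End_\Lambda(P)$, so your simplicity conclusion stands. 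Parts (1) and (3) are handled essentially as in the paper.
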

\begin{proof}
(1) is immediate.

(2) The first statement is well-known, e.g. see \cite[1.2 (3)]{tauthree}.
We will prove that $\Ext^1_\Lambda(S, I)$ is a simple $\End_\Lambda(I)$-module.
Applying $\Hom_\Lambda(-, I)$ to $0 \to I \to \corad I \to \cotop I \to 0,$
we have an exact sequence 
\begin{equation}
\label{G-eq:injone}
0=\Hom_\Lambda(\cotop I, I) \to \Hom_\Lambda(\corad I, I) \to \Hom_\Lambda(I,I)\to \Ext^1_\Lambda(\cotop I, I) \to \Ext^1_\Lambda(\corad I, I)=0,
 \end{equation}
where the last term is $0$ since $\corad I \in \CM\Lambda$ and $I \in \inj \Lambda$.
Since $\Dr I \in \ind(\proj \Lambda^{\operatorname{op}}) $, there is an exact sequence $$0 \to \rad (\Dr I) \to \Dr I \to \tp(\Dr I) \to 0.$$
By applying $\Hom_{\Lambda^{\operatorname{op}}}(\Dr I, -)$, we get an exact sequence 
\begin{equation}
\label{G-eq:injtwo}
0\to \Hom_{\Lambda^{\operatorname{op}}}(\Dr I, \rad(\Dr I)) \to \Hom_{\Lambda^{\operatorname{op}}}(\Dr I, \Dr I) \to \Hom_{\Lambda^{\operatorname{op}}}(\Dr I, \tp(\Dr I)) \to 0.
 \end{equation}
 Since the left and the middle terms of \eqref{G-eq:injone} and \eqref{G-eq:injtwo} are isomorphic, we have $$\Ext^1_\Lambda(\cotop I, I)\cong \Hom_{\Lambda^{\operatorname{op}}}(\Dr I, \tp(\Dr I)).$$ This is a simple $\End_\Lambda(I)^{\operatorname{op}}$-module and a simple $\End_\Lambda(S)$-module by (1).
 
(3) According to (2) above, the statement $\Ext^1_\Lambda(\tp P,I)\neq 0$ is equivalent to $\cotop I\cong \tp P$. Then by Proposition \ref{G-lem:onetoone}, this is equivalent to $I \cong \nu P$.
\end{proof}

The class of Gorenstein orders is very important and the definition of Gorenstein orders is the following. 
 \begin{definition}
 \label{G-def:gorenstein}
An $R$-order $\Lambda$ is \emph{Gorenstein} if $\Hom_{{R}}(\Lambda_\Lambda, R)$ is projective as a left $\Lambda$-module, or equivalently, if $\Hom_R({}_\Lambda \Lambda, R)$ is projective as a right $\Lambda$-module. 
 \end{definition}
 If $\Lambda$ is Gorenstein, then $\CM\Lambda$ is a Frobenius category and therefore $\uCM\Lambda$ has a natural structure of a triangulated category \cite{Happel} such that the suspension functor $[1]$ is given by $X[1]=Y$, where $Y$ is given by the exact sequence \eqref{G-eq:injectivehull}.
The following lemma is about some basic property of Cohen-Macaulay modules over Gorenstein orders that we need to use later.
 \begin{lemma}
\label{G-lem:iso}
Let $\Lambda$ be a Gorenstein $R$-order.
For any $X, Z\in \CM\Lambda$, we have the following isomorphism $$\uHom_\Lambda(X, Z) \cong \Ext^1_\Lambda (Y, Z),$$
where $Y$ is given by \eqref{G-eq:injectivehull}.
 \end{lemma}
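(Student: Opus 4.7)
The plan is to apply $\Hom_\Lambda(-,Z)$ to the short exact sequence
\[ 0\to X \xrightarrow{f} Q \to Y\to 0 \]
from \eqref{G-eq:injectivehull} and extract the desired isomorphism from the resulting long exact sequence. This gives
\[ \Hom_\Lambda(Q,Z) \xrightarrow{f^*} \Hom_\Lambda(X,Z) \to \Ext^1_\Lambda(Y,Z) \to \Ext^1_\Lambda(Q,Z), \]
so once I know that $\Ext^1_\Lambda(Q,Z)=0$ and that $\im f^* = [\Lambda](X,Z)$, I obtain the claimed isomorphism by passing to the quotient.

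The first step is to use the Gorenstein hypothesis to upgrade $Q$ from injective to projective. By Definition \ref{G-def:gorenstein}, $\Dr\Lambda$ is projective as a left $\Lambda$-module, so $\inj\Lambda=\add(\Dr\Lambda)\subset\proj\Lambda$; applying $\Dr$ to the right-hand analogue shows that the reverse inclusion also holds, hence $\inj\Lambda=\proj\Lambda$ for Gorenstein orders. In particular $Q$ is projective, so $\Ext^1_\Lambda(Q,Z)=0$ and the long exact sequence truncates as required.

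The second step is to identify $\im f^*$ with the ideal $[\Lambda](X,Z)$ of maps that factor through a projective module. The inclusion $\im f^*\subset [\Lambda](X,Z)$ is immediate because $Q$ is projective. For the reverse inclusion, take any $h\in[\Lambda](X,Z)$ and factor it as $h=\alpha\beta$ with $\alpha\colon X\to P$, $\beta\colon P\to Z$ and $P$ projective. Since $\Lambda$ is Gorenstein, $P$ is also injective by the previous paragraph, and $f\colon X\to Q$ is a monomorphism with $Q/X=Y\in\CM\Lambda$; hence by Proposition \ref{G-pro:injectivehull}(2) applied to $P$ we have $\Ext^1_\Lambda(Y,P)=0$, so $\alpha$ extends along $f$ to a map $\gamma\colon Q\to P$ with $f\gamma=\alpha$. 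Then $h=f(\gamma\beta)\in\im f^*$, which gives the other inclusion.

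Combining these two steps, the long exact sequence yields
\[ \uHom_\Lambda(X,Z) \;=\; \Hom_\Lambda(X,Z)/[\Lambda](X,Z) \;=\; \Hom_\Lambda(X,Z)/\im f^* \;\cong\; \Ext^1_\Lambda(Y,Z), \]
as required. The only conceptually delicate point is the identification of $\im f^*$ with $[\Lambda](X,Z)$; this is exactly where the Frobenius structure on $\CM\Lambda$ (i.e.\ the coincidence $\proj\Lambda=\inj\Lambda$ afforded by the Gorenstein assumption) is essential, since one uses \emph{both} projectivity of $Q$ (for the easy inclusion) and injectivity of an arbitrary projective $P$ (for the extension of $\alpha$). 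No further calculation is needed.
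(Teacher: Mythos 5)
Your proof is correct and follows essentially the same route as the paper: apply $\Hom_\Lambda(-,Z)$ to \eqref{G-eq:injectivehull}, note that $Q$ is projective (Gorensteinness), and identify $\im f^*$ with $[\Lambda](X,Z)$ by extending any map $X\to P$ along $f$ using $\Ext^1_\Lambda(Y,P)=0$. The paper phrases this last point as $\Ext^1_\Lambda(Y,\Lambda)=0$ rather than via the explicit equality $\proj\Lambda=\inj\Lambda$, but the content is identical.
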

 \begin{proof}
  Applying $\Hom_\Lambda(-, Z)$ with $Z\in \CM\Lambda$ to \eqref{G-eq:injectivehull}, we get the following long exact sequence
\begin{equation*}
\label{G-long}
\begin{tikzcd}
0 \arrow{r} & \Hom_\Lambda(Y, Z) \arrow{r} & \Hom_\Lambda(Q, Z) \arrow{r}{f^*} & \Hom_\Lambda(X, Z) \arrow{r} &\Ext^1_\Lambda(Y, Z)\arrow{r} &0.
\end{tikzcd}
\end{equation*}
Since $\Lambda$ is Gorenstein, we have $\Ext^1_\Lambda(Y,\Lambda)=0.$
Therefore, any morphism from $X$ to a projective $\Lambda$-module factors through $f$.
In particular, we have $\im f^* \cong {[\Lambda]}(X, Z).$ 
By the above exact sequence, we have $\uHom_\Lambda(X, Z) \cong \Ext^1_\Lambda (Y, Z)$.
 \end{proof}

 Notice that when $\Lambda$ is Gorenstein, the Nakayama functor induces an autoequivalence $$\nu: \CM\Lambda \xrightarrow{\sim} \CM\Lambda,$$ which induces a triangle-equivalence $\nu: \uCM\Lambda \xrightarrow{\sim} \uCM\Lambda$.
The composition $\tau:=\nu [-1] : \uCM\Lambda \xrightarrow{\nu} \uCM\Lambda \xrightarrow{[-1]} \uCM\Lambda$ is called the \emph{Auslander-Reiten translation}.
 If $\Lambda \otimes_R K$ is a semisimple $K$-algebra, then Auslander-Reiten theory works in $\CM\Lambda$ as the following classical results show.

\begin{proposition}
[{\cite{FMO, almostsplitseqorder}}]
\label{G-pro:nakayama}
If $\Lambda$ is Gorenstein and $\Lambda \otimes_R K$ is a semisimple $K$-algebra, then 
\begin{itemize}
   \item[(1)] $\uCM\Lambda$ is a Hom-finite triangulated category;
   \item[(2)] we have a functorial isomorphism $\uHom_\Lambda(X,Y)\cong \Dk\uHom_\Lambda(Y,\nu X)$ for any $X,Y \in \CM\Lambda$.
       \end{itemize}
\end{proposition}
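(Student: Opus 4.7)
The plan is to handle (1) and (2) separately, in both cases using Lemma~\ref{G-lem:iso} (the isomorphism $\uHom_\Lambda(X,Z)\cong\Ext^1_\Lambda(Y,Z)$ with $Y$ coming from \eqref{G-eq:injectivehull}) as the bridge between $\uHom$ in $\uCM\Lambda$ and $\Ext^1$ in $\CM\Lambda$. The triangulated structure on $\uCM\Lambda$ is already provided by the Frobenius structure on $\CM\Lambda$ recalled just before the statement, so only Hom-finiteness needs to be proved in (1).

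For (1), I would rewrite $\uHom_\Lambda(X,Y)\cong\Ext^1_\Lambda(X',Y)$ by Lemma~\ref{G-lem:iso}, where $X'\in\CM\Lambda$ is the cokernel in an injective hull sequence \eqref{G-eq:injectivehull} for $X$. Since $\Lambda$ is finitely generated over the Noetherian ring $R$, $\Ext^1_\Lambda(X',Y)$ is a finitely generated $R$-module. The hypothesis that $\Lambda\otimes_R K$ is semisimple forces $\Ext^1_{\Lambda\otimes_R K}(X'\otimes_R K,Y\otimes_R K)=0$; since $K$ is a flat localization of $R$ and $\Ext^1$ commutes with flat base change for finitely generated modules over a Noetherian ring, this gives $\Ext^1_\Lambda(X',Y)\otimes_R K=0$. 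A finitely generated torsion module over the DVR $R$ has finite length, yielding Hom-finiteness.

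For (2), the plan is to invoke the classical Auslander-Reiten duality for Cohen-Macaulay modules over a Gorenstein $R$-order with $\Lambda\otimes_R K$ semisimple,
\begin{equation*}
\Ext^1_\Lambda(A,B)\cong\Dk\uHom_\Lambda(B,\tau A)\quad\text{for all }A,B\in\CM\Lambda,
\end{equation*}
proved in \cite{FMO, almostsplitseqorder}. Setting $A=X[1]$ and combining with Lemma~\ref{G-lem:iso} yields, since $\tau\circ[1]=\nu$,
\begin{equation*}
\uHom_\Lambda(X,Y)\cong\Ext^1_\Lambda(X[1],Y)\cong\Dk\uHom_\Lambda(Y,\tau(X[1]))=\Dk\uHom_\Lambda(Y,\nu X),
\end{equation*}
as required. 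Functoriality in $X$ and $Y$ follows from functoriality at each step.

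The main obstacle is the Auslander-Reiten duality formula used in (2): a self-contained proof requires a Matlis-type pairing between a projective presentation of $X[1]$ and its $\Dr$-dual injective copresentation on the $\Lambda^{\operatorname{op}}$-side, together with a careful analysis of the boundary map modulo projectively factoring morphisms. Since this calculation is carried out in the cited references in exactly the generality needed, I would simply cite those works rather than reproduce it here.
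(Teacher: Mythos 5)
Your proposal is correct and follows essentially the same route as the paper: part (1) combines the vanishing of $\Ext^1$ after tensoring with $K$ (hence torsion, hence finite length over the complete DVR) with Lemma \ref{G-lem:iso}, and part (2) cites the classical Auslander--Reiten duality of \cite{FMO, almostsplitseqorder} and translates $\Ext^1$ into $\uHom$ via Lemma \ref{G-lem:iso} and $\tau\circ[1]=\nu$. The only differences are cosmetic (the order in which Lemma \ref{G-lem:iso} and the torsion argument are applied, and the exact form in which the duality $\uHom_\Lambda(X,Y)\cong\Dk\Ext^1_\Lambda(Y,\tau X)$ is quoted).
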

\begin{proof}
(1) Let us prove the Hom-finiteness. For any $X,Y \in \CM\Lambda$, since $\Lambda \otimes_R K$ is semisimple by our assumption, we have $$\Ext^1_\Lambda (X,Y) \otimes_R K = \Ext^1_{\Lambda \otimes_R K}(X\otimes_R K, Y\otimes_R K)=0.$$
Therefore, $\Ext^1_\Lambda(X,Y) $ is an $R$-module of finite length. We conclude by using Lemma \ref{G-lem:iso}.

 (2) Note that we have $\tau=\Tr  [-1]  \Dr: \uCM\Lambda \to \uCM\Lambda$, where $\Tr$ is the transpose: $\umod\Lambda \to \umod\Lambda^{\operatorname{op}}$ \cite{bluebook, AB69}. 
As $R$ has Krull dimension $1$, according to \cite[Chapter \uppercase\expandafter{\romannumeral1}, Proposition 8.7]{FMO}, we have a functorial isomorphism $\uHom_\Lambda(X,Y) \cong \Dk \Ext^1_\Lambda(Y, \tau X)$.
Therefore, $\uHom_\Lambda(X,Y)\cong \Dk\uHom_\Lambda(Y,\nu X)$ holds for any $X,Y \in \uCM\Lambda$.
\end{proof}

Given an $R$-order $\Lambda$, we denote the radical of the category $\CM\Lambda$ by $\rad_{\CM\Lambda} (-,-)$.
We can define a valued Auslander-Reiten quiver as follows.
\begin{definition}[e.g. {\cite[3.8]{roggenkamp882}}]
Let $X,Y \in \ind(\CM\Lambda)$.
Denote $$\Irr(X,Y):=\frac{\rad_{\CM\Lambda}(X,Y)}{\rad_{\CM\Lambda}^2(X,Y)} \and k_X:=\frac{\End_\Lambda(X)}{\rad_{\CM\Lambda}(X,X)}.$$
We define $$d_{XY}:= \dim_{k_X} (\Irr(X,Y)) \and d'_{XY}:= \dim (\Irr(X,Y))_{k_Y}.$$
The \emph{Auslander-Reiten quiver} $\mathfrak{A}(\Lambda)$ of $\Lambda$ is a valued translation quiver $(\ind(\CM\Lambda), d, d', \tau)$, where the translation $\tau$ is the Auslander-Reiten translation, $\mathfrak{A}(\Lambda)^p= \ind(\proj \Lambda)$ and $\mathfrak{A}(\Lambda)^i = \ind(\inj \Lambda)$.
 The \emph{stable Auslander-Reiten quiver} $\underline{\mathfrak{A}}(\Lambda)$ is the full subquiver of $\mathfrak{A}(\Lambda)$ whose vertices are the objects in $\ind(\CM\Lambda)\smallsetminus \ind(\proj\Lambda)$.
\end{definition}

\begin{remark}
\label{G-rem:almostsplit}
 (1) (e.g. {\cite[3.8]{roggenkamp882}}) For any $X\in \ind(\CM\Lambda)$, the right minimal almost split map is of the form $$\bigoplus_{Y \in \ind(\CM\Lambda)} Y^{d_{YX}} \to X,$$ and the left minimal almost split map is of the form $$X \to \bigoplus_{Y \in \ind(\CM\Lambda)} Y^{d'_{XY}} .$$
 
 (2) For any $P\in \ind(\proj\Lambda)$, $\rad P \to P$ is a minimal right almost split map.
 For any $I \in \ind(\inj\Lambda)$, $I \to \corad I$ is a minimal left almost split map.
\end{remark}

We have the following analogue of Riedtmann's structure theorem of stable Auslander-Reiten quivers \cite{Riedtmannstructure} for orders.
 \begin{theorem} 
 \label{G-th:structure}
 Let $R$ be a complete discrete valuation ring and $\Lambda$ be a representation-finite Gorenstein $R$-order.
 The stable Auslander-Reiten quiver $\underline{\mathfrak{A}}(\Lambda)$ of $\Lambda$ is isomorphic to a disjoint union of $\ZZ\Delta/G,$ where $\Delta$ is a Dynkin diagram and $G \subset \Aut(\ZZ\Delta)$ is a weakly admissible group.
\end{theorem}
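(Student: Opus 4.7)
The plan is to mimic Riedtmann's original argument for representation-finite selfinjective algebras \cite{Riedtmannstructure}, adapted to the valued setting of orders. First, observe that since $\Lambda$ is representation-finite, $\Lambda \otimes_R K$ is a semisimple $K$-algebra (a classical fact for orders of finite representation type). Hence Proposition \ref{G-pro:nakayama} applies: $\uCM\Lambda$ is a Hom-finite Krull--Schmidt triangulated category with Serre functor $\nu$ and Auslander--Reiten translation $\tau=\nu[-1]$. Deleting the finitely many vertices of $\ind(\proj\Lambda)$ and $\ind(\inj\Lambda)$ from $\mathfrak{A}(\Lambda)$ yields the stable Auslander--Reiten quiver $\underline{\mathfrak{A}}(\Lambda)$, which by Remark \ref{G-rem:almostsplit} is a locally finite \emph{stable} valued translation quiver on which $\tau$ acts bijectively.

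Next, I would apply the valued version of Riedtmann's structure theorem for stable translation quivers: every connected component $\ct$ of such a quiver is isomorphic to $\ZZ T/G$, where $T$ is a valued tree (the \emph{tree class} of $\ct$, obtained by unfolding the knitting procedure starting at a chosen base point $x_0 \in \ct$) and $G \subset \Aut(\ZZ T)$ is the group of deck transformations of the resulting covering $\ZZ T \to \ct$. A direct verification, using that the covering is unramified and that $\tau$ has no fixed points in the stable setting, shows that $G$ is weakly admissible in the sense of Section \ref{G-def:translation}.

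It remains to show that the tree class $T$ of each component is a Dynkin diagram. For this I would use the $R$-rank function $X \mapsto \mathrm{rank}_R X$, which is additive on short exact sequences in $\CM\Lambda$ because every object of $\CM\Lambda$ is $R$-free. Applied to the almost split sequence $0 \to \tau X \to \bigoplus_{Y \in \ind(\CM\Lambda)} Y^{d_{YX}} \to X \to 0$ of Remark \ref{G-rem:almostsplit} at each non-projective $X$, and noting that possible projective summands of the middle term contribute non-negatively, the rank descends to a strictly positive subadditive function on $\underline{\mathfrak{A}}(\Lambda)$. Since $\Lambda$ is representation-finite, $\ZZ T/G$ is a finite quiver, so by the Happel--Preiser--Ringel theorem on subadditive functions on valued translation quivers, the tree $T$ must be of finite Dynkin type (the extended Dynkin alternative being ruled out by finiteness of the component).

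The main obstacle will be the valued bookkeeping: over a residue field that is not assumed algebraically closed, the bimodule structure on $\Irr(X,Y)$ produces two valuations $d_{XY}, d'_{XY}$ rather than one, and one must check both that the tree-class construction and the Happel--Preiser--Ringel classification go through for valued quivers, producing precisely the valued diagrams of Figure \ref{G-fig:dynkin} (including $B_n$, $C_n$, $F_4$, $G_2$). Concretely, this reduces to verifying the compatibility $a_x d_{xy} = d'_{xy} a_y$ of Section \ref{G-def:translation} with $a_x$ given by an appropriate multiplicity derived from the $R$-rank of the indecomposable at $x$, which is a careful but routine calculation.
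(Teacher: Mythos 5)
Your overall route is genuinely different from the paper's: the paper disposes of this theorem in three lines by invoking Proposition \ref{G-pro:nakayama} (so that $\uCM\Lambda$ is a Hom-finite triangulated category) and then citing Xiao--Zhu's structure theorem for Auslander--Reiten quivers of locally finite triangulated categories, remarking only that the arguments there work over $R$ rather than over a field. You instead redo the underlying argument by hand: Riedtmann's covering $\ZZ T/G\to\ct$ for each stable component, plus a Happel--Preiser--Ringel subadditive-function argument to pin down the tree class, with the $R$-rank as the test function. That is a legitimate strategy in principle, and your opening reduction (representation-finiteness forces $\Lambda\otimes_R K$ semisimple, hence Proposition \ref{G-pro:nakayama} applies) is the same implicit step the paper uses.

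However, there is a genuine gap at the decisive point: you claim the Euclidean alternative is ``ruled out by finiteness of the component.'' It is not. A finite connected stable valued translation quiver carrying a subadditive function can perfectly well have Euclidean tree class --- for instance $\ZZ\widetilde{A}_n/\langle\tau\rangle$ is finite and admits the constant function, which is even additive. What Happel--Preiser--Ringel actually excludes Euclidean with is a subadditive function that is \emph{not} additive on the component. Your rank function is exactly additive at every vertex whose almost split sequence has no projective direct summand in the middle, so you must additionally show that \emph{every} stable component contains a vertex $X$ such that some $P\in\ind(\proj\Lambda)$ occurs in the middle term of the almost split sequence ending at $X$ (equivalently, that the component is adjacent in $\mathfrak{A}(\Lambda)$ to a projective vertex). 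This needs an Auslander-type connectedness argument for $\mathfrak{A}(\Lambda)$ of a ring-indecomposable representation-finite order, which you neither state nor prove. The paper's route via Xiao--Zhu sidesteps this entirely, because the length function $\sum_Y \len\uHom_\Lambda(Y,-)$ used there is strictly subadditive at every vertex simply because Auslander--Reiten triangles do not split --- no projectives are needed. Either supply the missing adjacency/connectedness argument, or switch to a Hom-length function as in the paper's reference.
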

\begin{proof}
By Proposition \ref{G-pro:nakayama}, $\uCM\Lambda$ is a Hom-finite triangulated category. Thus, the statement follows from a result by Xiao and Zhu \cite[Theorem 2.3.5]{Xiaozhutriangulated}, see also \cite[Theorem 4.1]{Amiottri}.
Note that in \cite{Xiaozhutriangulated}, triangulated categories over a field are discussed, but all arguments work in our settings.
\end{proof}

\section{Categorical configuration}
\label{G-s:categorical}
\begin{assumption}
\label{G-assumption}
In this section, we assume that the following constraints are satisfied.
\begin{itemize}
     \item $\Lambda$ is a ring-indecomposable representation-finite Gorenstein $R$-order; 
     \item $\rad P$ is indecomposable and non-projective for any $P\in \ind(\proj\Lambda)$;
     \item  $\rad P \ncong \rad Q$ holds for any $P, Q \in \ind(\proj\Lambda)$ satisfying $P \ncong Q$.
 \end{itemize}    
 \end{assumption}
Under Assumption \ref{G-assumption}, we will show in two main theorems that the set of isomorphism classes of the radicals of indecomposable projective $\Lambda$-modules satisfy two categorical conditions. The first one is the following.
\begin{theorem}
\label{G-pro:c1}
For any non-projective $X\in\ind(\CM\Lambda)$, $X$ admits an injective hull $P\in\ind(\proj\Lambda)$ which satisfies $$\uHom_\Lambda(X, \rad P) \neq 0.$$
\end{theorem}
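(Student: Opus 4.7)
My plan is a contradiction argument that exploits the left-minimality of the injective hull, made possible by the Gorenstein condition. First I would observe that $\inj\Lambda = \proj\Lambda$ as additive subcategories of $\CM\Lambda$: since $\nu\Lambda = D_1\Lambda$ is projective by Definition \ref{G-def:gorenstein}, we have $\inj\Lambda = \add(\nu\Lambda) \subseteq \proj\Lambda$, and the equivalence $\nu\colon \proj\Lambda \xrightarrow{\sim} \inj\Lambda$ together with the bijections of Proposition \ref{G-lem:onetoone} upgrades this to equality. By Proposition \ref{G-pro:injectivehull}(1) together with the Krull–Schmidt property of $\CM\Lambda$, fix a left-minimal monomorphism $f\colon X \hookrightarrow Q$ with $Q = \bigoplus_{j} P_j$ and each $P_j \in \ind(\proj\Lambda)$. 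For each $j$, the component $f_j\colon X \to P_j$ of $f$ cannot be a split epimorphism, since otherwise $X$ would be isomorphic to $P_j$ and hence projective; so by the right-almost-split property of $\rad P_j \to P_j$ (Remark \ref{G-rem:almostsplit}(2)) it factors as $f_j = g_j \iota_j$ with $g_j\colon X \to \rad P_j$ and $\iota_j\colon \rad P_j \hookrightarrow P_j$.

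The theorem reduces to showing that $g_j \notin [\Lambda](X, \rad P_j)$ for at least one $j$, for then $P := P_j$ is an indecomposable projective summand of the injective hull with the desired property. Suppose for contradiction that every $g_j$ factors through a projective; choose factorizations $g_j = a_j b_j$ with $a_j\colon X \to P_0^{(j)}$, $b_j\colon P_0^{(j)} \to \rad P_j$, and $P_0^{(j)} \in \proj\Lambda$. Set $P_0 := \bigoplus_j P_0^{(j)}$, $a := (a_j)\colon X \to P_0$, $b := \bigoplus_j b_j\colon P_0 \to \rad Q = \bigoplus_j \rad P_j$, so that $ab = (g_j)\colon X \to \rad Q$. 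Composing with the inclusion $\iota\colon \rad Q \hookrightarrow Q$ recovers the injective hull $f = (ab)\iota = a h$, where $h := b\iota\colon P_0 \to Q$ is a morphism whose image is contained in $\rad Q \subsetneq Q$.

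The decisive step uses that $P_0 \in \proj\Lambda = \inj\Lambda$ is injective: since $f$ is a monomorphism, I extend $a\colon X \to P_0$ to a morphism $\tilde a\colon Q \to P_0$ satisfying $f\tilde a = a$. The endomorphism $\phi := \tilde a h \in \End_\Lambda(Q)$ then satisfies $f\phi = f\tilde a h = a h = f$, so by left-minimality of the injective hull $\phi$ must be an automorphism of $Q$. However, $\im\phi \subseteq \im h \subseteq \rad Q \subsetneq Q$, so $\phi$ is not surjective, contradicting that it is an automorphism. The main obstacle is to see that the Gorenstein-specific identification $\inj\Lambda = \proj\Lambda$ is precisely what unlocks the argument, since it allows applying injectivity to $P_0$ and thereby producing an endomorphism of $Q$ whose image lies in $\rad Q$ — a configuration forbidden by left-minimality; the remaining bookkeeping (combining per-$j$ factorizations into one through $P_0 = \bigoplus P_0^{(j)}$) is routine.
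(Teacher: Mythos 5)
Your proof is correct and uses the same key mechanism as the paper's: factor the injective hull $X\hookrightarrow Q$ through the radical, suppose the resulting map is stably zero, use that projectives are injective Cohen--Macaulay modules (the Gorenstein hypothesis) to extend along the monomorphism, and contradict left-minimality of the hull by producing an endomorphism of $Q$ whose image lies in $\rad Q$. The only real difference is that the paper asserts the injective hull of $X$ is itself indecomposable and argues with a single $P$, whereas you decompose $Q=\bigoplus_j P_j$ and show that some summand works; this makes your argument cover the possibly decomposable case and fully establishes the conclusion $\uHom_\Lambda(X,\rad P)\neq 0$ for some $P\in\ind(\proj\Lambda)$ --- which is exactly what is needed for condition (C1) --- though it does not address the clause of the statement asserting that the injective hull itself is indecomposable.
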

\begin{proof}
For any non-projective $X\in\ind(\CM\Lambda)$, thanks to Proposition \ref{G-pro:injectivehull} (1), there exists a non-split exact sequence 
\begin{equation*}
0\to X \xrightarrow{h} P \to Y \to 0
\end{equation*}
 with $P \in \inj\Lambda$ and $Y \in \CM\Lambda$.
 Moreover, since $\Lambda$ is Gorenstein, we can choose the above exact sequence such that $P$ is the injective hull of $X$ in $\CM\Lambda$ which is indecomposable.
 
Since $h$ is a monomorphism and $X$ is non-projective, $h$ factors through $\rad P$. In fact, we have $$h= h' i_P :X \xrightarrow{h'} \rad P \xrightarrow{i_P} P.$$ 
We show that $h'$ does not factor through any projective modules. 
Assume $h'=st: X \xrightarrow{s} Q \xrightarrow{t} \rad P$ with $Q \in \ind(\proj\Lambda)$.
Since $Q$ is injective Cohen-Macaulay, we have the following commutative diagram
\begin{center}
\begin{tikzcd}
0 \arrow{r} &X \arrow{r}{h} \arrow{d}{s} &P \arrow{r}\arrow{dl}{g} &Y \arrow{r} & 0.\\
& Q &&&
\end{tikzcd}
\end{center}
Then $hgti_P=h$ holds and hence, $gt i_P \cong \Id_P$ and $i_P$ is a split epimorphism. This is a contradiction.
Thus, $ \uHom_\Lambda(X, \rad P)\neq 0$.
\end{proof}

The second categorical condition of configurations is described as follows.
\begin{theorem}
\label{G-pro:c2}
For any $P,Q\in\ind(\proj\Lambda)$, we have 
\begin{align*}
\len_{\End_\Lambda(\rad P)} \uHom_\Lambda (\rad P, \rad Q)
=\begin{cases}
2, \text{ if }P \cong Q \cong \nu^{-1} Q ;\\
1, \text{ if }P \cong  Q \ncong \nu^{-1} Q ;\\
1, \text{ if }P \cong \nu^{-1} Q \ncong Q ;\\
0,\text{ if } P \ncong Q, \nu^{-1} Q.
\end{cases}
\end{align*}
 \end{theorem}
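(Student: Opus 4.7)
The plan is to compute $\uHom_\Lambda(\rad P, \rad Q)$ by combining two long exact sequences derived from the canonical short exact sequences $0 \to \rad P \xrightarrow{\iota} P \to \tp P \to 0$ and $0 \to \rad Q \to Q \to \tp Q \to 0$, together with Lemma~\ref{G-lem:onedim}. Applying $\Hom_\Lambda(-, \rad Q)$ to the first sequence, and using $\Hom_\Lambda(\tp P, \rad Q) = 0$ (torsion pair) together with $\Ext^1_\Lambda(P, \rad Q) = 0$ ($P$ projective), one obtains
\[
0 \to \Hom_\Lambda(P, \rad Q) \xrightarrow{\iota^*} \Hom_\Lambda(\rad P, \rad Q) \to \Ext^1_\Lambda(\tp P, \rad Q) \to 0,
\]
with $\im(\iota^*) \subseteq [\Lambda](\rad P, \rad Q)$ since such maps factor through $P$. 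Applying $\Hom_\Lambda(\tp P, -)$ to the second sequence (using $\Hom_\Lambda(\tp P, \rad Q) = \Hom_\Lambda(\tp P, Q) = 0$) yields
\[
0 \to \Hom_\Lambda(\tp P, \tp Q) \to \Ext^1_\Lambda(\tp P, \rad Q) \to \Ext^1_\Lambda(\tp P, Q) \to \Ext^1_\Lambda(\tp P, \tp Q).
\]
By Lemma~\ref{G-lem:onedim}, $\Hom_\Lambda(\tp P, \tp Q)$ is a simple module exactly when $P \cong Q$ (and is zero otherwise), and $\Ext^1_\Lambda(\tp P, Q)$ is simple exactly when $Q \cong \nu P$ (zero otherwise).

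When $P \ncong Q$ and $P \ncong \nu^{-1} Q$, both of these terms vanish, so $\Ext^1_\Lambda(\tp P, \rad Q) = 0$ and the first sequence forces $\Hom_\Lambda(\rad P, \rad Q) = \im(\iota^*) \subseteq [\Lambda]$, whence $\uHom_\Lambda(\rad P, \rad Q) = 0$. For the case $P \cong Q \ncong \nu Q$, I would argue that every non-isomorphism $f \in \End_\Lambda(\rad P)$ factors through $P$: since $\Ext^1_\Lambda(\tp P, P) = 0$ (by $P \ncong \nu P$ and Lemma~\ref{G-lem:onedim}(3)), the composite $\iota f : \rad P \to P$ extends to some $\phi : P \to P$; as $f$ is non-invertible and $\End_\Lambda(P)$ is local, $\phi$ must also be non-invertible, hence $\phi(P) \subseteq \rad P$, which provides a factorization $f = \bar\phi \circ \iota$ through the projective $P$. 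Consequently $[\Lambda](\rad P, \rad P) = \rad \End_\Lambda(\rad P)$ and $\uEnd_\Lambda(\rad P)$ has length one. The case $P \cong \nu^{-1} Q \ncong Q$ then follows via the Nakayama symmetry of Proposition~\ref{G-pro:nakayama}(2), using the compatibility $\nu(\rad P) \cong \rad(\nu P)$, which holds because $\nu$ is an autoequivalence of $\CM\Lambda$ preserving minimal right almost split maps ending at indecomposable projectives.

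The delicate case is $P \cong Q \cong \nu Q$, where $\Ext^1_\Lambda(\tp P, P) = \Ext^1_\Lambda(\tp P, \nu P)$ is simple and nonzero, so the extension argument above fails for certain $f$. My plan here is to show that $[\Lambda](\rad P, \rad P)$ still coincides with $\im(\iota^*)$ so that $\uEnd_\Lambda(\rad P) \cong \Ext^1_\Lambda(\tp P, \rad P)$ via the first sequence, and to verify separately that $\Ext^1_\Lambda(\tp P, \rad P)$ has length two, with composition factors $\Hom_\Lambda(\tp P, \tp P)$ and $\Ext^1_\Lambda(\tp P, \nu P)$. The main obstacle is precisely this self-dual case: one must identify $[\Lambda]$ in the presence of non-extending lifts $\alpha : \rad P \to P$ of a generator of $\Ext^1_\Lambda(\tp P, P)$, showing that any composition $\beta \alpha$ with $\beta : P \to \rad P$ nonetheless lies in $\im(\iota^*)$ (this uses that the simple module $\Ext^1_\Lambda(\tp P, P)$ is annihilated by $\rad \End_\Lambda(P)$, so the extension of $\iota \beta \alpha$ to $P$ has image in $\rad P$); and one must prove that the connecting morphism $\Ext^1_\Lambda(\tp P, \nu P) \to \Ext^1_\Lambda(\tp P, \tp P)$ vanishes, so that $\Ext^1_\Lambda(\tp P, \rad P)$ is genuinely a length-two extension of two simples. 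Both facts should be accessible through a careful analysis exploiting the $\nu$-invariance of $\rad P$ and the AR duality at the self-dual point $P = \nu P$.
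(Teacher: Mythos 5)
Your filtration of $\uHom_\Lambda(\rad P,\rad Q)$ by the two long exact sequences, with composition factors $\Hom_\Lambda(\tp P,\tp Q)$ and $\Ext^1_\Lambda(\tp P,Q)$, is essentially a reorganization of the paper's argument (Lemma \ref{G-lem:seqforlength} packages the same two factors into the single sequence $0\to\Hom_\Lambda(P,Q)/\rad_{\CM\Lambda}(P,Q)\to\uHom_\Lambda(\rad P,\rad Q)\to\Ext^1_\Lambda(\tp P,Q)\to 0$), and your treatment of the vanishing case and of $P\cong Q\ncong\nu^{-1}Q$ is complete and correct. However, the proposal does not prove the theorem: in the case $P\cong Q\cong\nu^{-1}Q$ you explicitly defer the two decisive steps, and these are exactly the technical heart of the paper's proof. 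First, the identification $[\Lambda](\rad P,\rad P)=\im(\iota^*)$: the paper proves this as Lemma \ref{G-lem:commdiag}(3) by factoring $f$ through a projective $I$, reducing to $I\cong\nu P$, and using that the resulting non-isomorphism $I\to Q$ lies in $\rad_{\CM\Lambda}(Q,Q)$, which annihilates the simple $\End_\Lambda(Q)^{\operatorname{op}}$-module $\Ext^1_\Lambda(\tp P,Q)$. You name the right mechanism but do not carry it out; note also that a map in $[\Lambda]$ factors through an arbitrary projective, not through $P$ itself, so one must first show that every map $\rad P\to I_j$ into an indecomposable projective factors through $\rad I_j$ --- this uses Assumption \ref{G-assumption} ($\rad P\ncong I_j$) and that $\rad I_j\to I_j$ is minimal right almost split (the paper's Lemma \ref{G-lem:Hiso}(1)). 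Second, the vanishing of the connecting map $\Ext^1_\Lambda(\tp P,Q)\to\Ext^1_\Lambda(\tp P,\tp Q)$, equivalently the surjectivity of $\Ext^1_\Lambda(\tp P,\rad Q)\to\Ext^1_\Lambda(\tp P,Q)$, again reduces to the isomorphism $\Hom_\Lambda(\rad P,\rad Q)\cong\Hom_\Lambda(\rad P,Q)$ of Lemma \ref{G-lem:Hiso}(1); without it your sequence only bounds the length between $1$ and $2$.

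A second, smaller gap is in the case $P\cong\nu^{-1}Q\ncong Q$. The duality gives $\uHom_\Lambda(\rad P,\rad Q)\cong\Dk\uEnd_\Lambda(\rad Q)$, a one-dimensional $k_{\rad Q}$-vector space; but the theorem asserts length one over $\End_\Lambda(\rad P)$, which acts on the other side of this bimodule. Since the residue division rings $k_{\rad P}$ and $k_{\rad Q}$ need not coincide (the residue field of $R$ is not assumed algebraically closed, which is why the AR quiver carries two-sided valuations), simplicity on one side does not transfer for free. The paper closes this by Lemma \ref{G-lem:residuefield} ($\End_\Lambda(\tp P)\cong k_P\subseteq k_{\rad P}$), identifying $\uHom_\Lambda(\rad P,\rad Q)$ with the simple $\End_\Lambda(\tp P)$-module $\Ext^1_\Lambda(\tp P,Q)$ and deducing $k_P\cong k_{\rad P}$; some such argument is needed in your route as well.
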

 
In the rest, we consider the following diagram with $P,Q\in\ind(\proj\Lambda)$
 \begin{equation}
\label{G-basic}
\begin{tikzcd}
0 \arrow{r} &\rad P \arrow{r}{i_P}\arrow{d}{ f} & P \arrow{r}{\pi_P}\arrow{d}{ g} & \tp P \arrow{r}\arrow{d}{ h} &0\\
0 \arrow{r} &\rad Q \arrow{r}[swap]{i_Q} & Q \arrow{r}[swap]{\pi_Q} & \tp Q \arrow{r} &0.
\end{tikzcd}
\end{equation}

\begin{lemma}
\label{G-lem:commdiag}
Let $P,Q \in \ind(\proj\Lambda)$.
\begin{itemize}
\item[(1)] For any $g \in \Hom_\Lambda ( P, Q)$, there exist $f \in \Hom_\Lambda (\rad P, \rad Q)$ and $h\in \Hom_\Lambda (\tp P, \tp Q)$ such that \eqref{G-basic} is commutative.
\item[(2)] Assume $Q \ncong \nu P$.
For any $f \in \Hom_\Lambda (\rad P, \rad Q)$, there exist $g\in \Hom_\Lambda ( P, Q)$ and $h\in \Hom_\Lambda (\tp P, \tp Q)$ such that \eqref{G-basic} is commutative.
\item[(3)] For any $f \in [\Lambda](\rad P, \rad Q)$, there exist $g \in \rad_{\CM\Lambda}(P,Q)$ such that \eqref{G-basic} is commutative with $h=0$.
\item[(4)] For the commutative diagram \eqref{G-basic}, the following conditions are equivalent:
     \begin{itemize}
           \item[(a)] $h=0$, 
           \item[(b)] $f \in [\Lambda](\rad P, \rad Q)$,
           \item[(c)]  $g \in \rad_{\CM\Lambda}(P,Q)$.
      \end{itemize}
\end{itemize}
\end{lemma}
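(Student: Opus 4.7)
I plan to treat the four assertions as essentially independent diagram manipulations: (1) and (4) are formal, (2) rests on a single $\Ext$-vanishing, and (3) uses the Gorenstein hypothesis through the identification $\proj \Lambda = \inj \Lambda$.

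For (1), I would appeal to functoriality of the radical: for any $\Lambda$-linear $g : P \to Q$ one has $g(\rad P) = g((\rad \Lambda) P) \subseteq (\rad \Lambda) Q = \rad Q$, so $f := g|_{\rad P}$ and the quotient map $h$ induced on tops fit into \eqref{G-basic}. For (2), I would apply $\Hom_\Lambda(-, Q)$ to $0 \to \rad P \to P \to \tp P \to 0$; the obstruction to lifting the composite $f i_Q : \rad P \to Q$ along $i_P$ lies in $\Ext^1_\Lambda(\tp P, Q)$, which vanishes precisely when $Q \ncong \nu P$ by Lemma \ref{G-lem:onedim}(3). The resulting lift $g$ satisfies $i_P g = f i_Q$, and (1) then supplies the corresponding $h$.

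For (3), factor $f = \alpha \beta$ through some $M \in \proj \Lambda$. Since $\Lambda$ is Gorenstein, $\proj \Lambda = \inj \Lambda$, so $M$ is injective in $\CM \Lambda$ and the monomorphism $i_P$ lets $\alpha$ extend to some $\tilde\alpha : P \to M$. Then $g := \tilde\alpha \beta i_Q$ factors through $\rad Q$, making \eqref{G-basic} commute with $h = 0$; the same factorization forces $g \in \rad_{\CM\Lambda}(P, Q)$, since otherwise $i_Q$ would be a split epimorphism and $Q$ a summand of $\rad Q$, contradicting indecomposability.

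For (4) I would close a cycle. The implication (a) $\Rightarrow$ (b), (c) follows by noting that $h = 0$ forces $g \pi_Q = 0$, so $g = g' i_Q$ for some $g' : P \to \rad Q$; then $f = i_P g'$ factors through the projective $P$, and $g$ cannot be an isomorphism. For (b) $\Rightarrow$ (a), apply (3) to produce a pair $(g_0, 0)$ completing the diagram for the same $f$; the difference $g - g_0$ satisfies $i_P (g - g_0) = 0$, so descends to a map $\tp P \to Q$, which vanishes because $\tp P$ is $R$-torsion and $Q$ is $R$-free. Hence $g = g_0$ and $h = 0$. For (c) $\Rightarrow$ (a), split on whether $P \cong Q$: if not, the tops are non-isomorphic simples by Proposition \ref{G-lem:onetoone} so $h = 0$ automatically; if so, $g$ is a non-unit of the local ring $\End_\Lambda(P)$, so Nakayama's lemma yields $g(P) \subseteq \rad P$ and $h = 0$ follows. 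The only real obstacle I foresee is keeping the composition convention $fg = $ first-$f$-then-$g$ straight throughout these diagram chases; the $\Ext$-vanishing in (2) and the Frobenius property in (3) do the conceptual work.
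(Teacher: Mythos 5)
Parts (1) and (2) match the paper's proof: functoriality of the radical for (1), and the vanishing of $\Ext^1_\Lambda(\tp P,Q)$ via Lemma \ref{G-lem:onedim}(3) for (2). Part (4) is organized slightly differently (you close the cycle as $a\Rightarrow b,c$; $b\Rightarrow a$; $c\Rightarrow a$, whereas the paper does $a\Rightarrow b\Rightarrow c\Rightarrow a$), but your arguments there are sound \emph{given} (3).

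The genuine gap is in (3), at the step where you extend $\alpha:\rad P\to M$ along the monomorphism $i_P:\rad P\hookrightarrow P$ using that $M\in\proj\Lambda=\inj\Lambda$. Injectivity of $M$ in the exact category $\CM\Lambda$ only gives lifting against monomorphisms whose cokernel lies in $\CM\Lambda$ (Proposition \ref{G-pro:injectivehull}(2) explicitly restricts the $\Ext$-vanishing to $\CM\Lambda$), but $\coker i_P=\tp P$ is an $R$-torsion module. The actual obstruction group is $\Ext^1_\Lambda(\tp P,M)$, and by Lemma \ref{G-lem:onedim}(3) this is nonzero exactly when $\nu P$ is a direct summand of $M$; moreover the connecting map $\Hom_\Lambda(\rad P,M)\to\Ext^1_\Lambda(\tp P,M)$ is surjective, so in that case there really are maps $\alpha$ that do not extend. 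Thus your construction of $\tilde\alpha$, and hence of $g$, breaks down precisely in the case $Q\cong\nu P$ (when the factoring projective can contain $\nu P$), which is the only nontrivial case. The paper handles it differently: it does not try to extend $\alpha$ itself, but shows that the obstruction class $\rho(f i_Q)=\rho(s\,t\,i_Q)$ vanishes because the induced map $\Ext^1_\Lambda(\tp P,I)\to\Ext^1_\Lambda(\tp P,Q)$ is given by the radical endomorphism $t\,i_Q$ acting on the \emph{simple} $\End_\Lambda(Q)^{\operatorname{op}}$-module $\Ext^1_\Lambda(\tp P,Q)$ (Lemma \ref{G-lem:onedim}(2)), hence is zero. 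Since your (4)($b\Rightarrow a$) invokes (3), the gap propagates there as well; everything else in your write-up is correct.
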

\begin{proof}
(1) Since any morphism $g: P \to Q$ induces a morphism $f: \rad P \to \rad Q$, then there exists $h\in \Hom_\Lambda (\tp P, \tp Q)$ such that  \eqref{G-basic} is commutative.

(2) It is sufficient to find $g\in \Hom_\Lambda(P,Q)$ such that $i_P  g=f  i_Q$.
Since $Q\ncong \nu P$, $\Ext^1_\Lambda(\tp P, Q)=0$ by Lemma \ref{G-lem:onedim} (3).
By applying $\Hom_\Lambda(-, Q)$ to the short exact sequence 
\begin{align*}
0 \to \rad P \xrightarrow{i_{ P}}  P \xrightarrow{\pi_{P}}  \tp{P} \to 0,
\end{align*}
we get the following long exact sequence
\begin{align*}
0 \to \Hom_\Lambda(\tp P, Q) \to \Hom_\Lambda( P, Q) \xrightarrow{i^*_P} \Hom_\Lambda(\rad P, Q) \to \Ext^1_\Lambda(\tp P, Q)=0.
\end{align*}
Therefore, there exists $g\in \Hom_\Lambda(P,Q)$ such that $i_P g=f  i_Q$.

(3) First, let us prove the existence of $g$. 
Assume $Q \ncong \nu P$. There exists $g : P \to Q$ such that $ f  i_{Q}= i_P  g$ by (2). 
Assume $Q \cong \nu P$. 
We have the following long exact sequence $$0 \to\Hom_\Lambda(\tp P, Q) \to  \Hom_\Lambda(P,Q) \xrightarrow{i_P^*} \Hom_\Lambda(\rad P,  Q)\xrightarrow{\rho}\Ext^1_\Lambda (\tp P,  Q) \to 0.$$
As $f \in {[\Lambda]}(\rad P, \rad Q)$, there exists $I\in \proj\Lambda$ and morphisms $s,t$ such that the following diagram commutes:
\begin{center}
\begin{tikzcd}[row sep=small]
0 \arrow{r} &\rad P \arrow{rr}{i_P}\arrow{dd}{\forall f} \drar{\exists s} && P \arrow{r}{\pi_P} & \tp P \arrow{r} &0\\
&& I\dlar{\exists t}\drar{t  i_{Q}}&&&\\
0 \arrow{r} &\rad Q \arrow{rr}[swap]{i_{Q}} && Q \arrow{r}[swap]{\pi_{Q}} & \tp Q \arrow{r} &0.
\end{tikzcd}
\end{center}
Let us prove that the morphism from $\Ext_\Lambda^1(\tp P, I)$ to $\Ext_\Lambda^1(\tp P, Q)$ induced by $t  i_{Q}$ is $0$.
Indeed, thanks to Lemma \ref{G-lem:onedim} (3), either $\Ext_\Lambda^1(\tp P, I)=0$ holds or $I \cong Q \cong \nu P$ holds. 
In the first case, the conclusion is obvious.
In the second case, as the morphism $t  i_{Q}: I \to Q$ is not an isomorphism, it induces a morphism $\chi$ in $\rad_{\CM\Lambda}(Q,Q) \subset \End_\Lambda(Q)$.
Moreover, by Lemma \ref{G-lem:onedim} (2), $\Ext_\Lambda^1(\tp P, Q)$ is a simple $\End_\Lambda(Q)^{\operatorname{op}}$-module. Thus, the morphism from $\Ext_\Lambda^1(\tp P, I)$ to $\Ext_\Lambda^1(\tp P, Q)$ induced by $t  i_{Q}$ has its image included in $ \Ext_\Lambda^1(\tp P, Q) \cdot \chi =0$. 
Then $\rho(  f  i_{Q})=\rho(s  t  i_{Q})=0$, thus, there exists $g : P \to Q$ such that $ f  i_{Q}= i_P  g $. 

Now we show that $g$ is in the radical.
Since $\rad P$ and $\rad Q$ are non-projective, it follows that $f$ is not an isomorphism which means that $g\in \rad_{\CM\Lambda}(P,  Q)$. Since $P$ and $Q$ are indecomposable projective, it follows that $h=0$.

(4) First, we prove ($a \Rightarrow b$). Since $h=0$, $g  \pi_Q=0$ holds. 
Thus, there exists $t : P \to \rad Q$ such that $g=t   i_Q$. It follows $f  i_Q = i_P  t  i_Q$, therefore, $f=i_P   t \in {[\Lambda]}(\rad P, \rad Q)$. 
($b \Rightarrow c$) follows from the proof of (3). ($c \Rightarrow a$) is obvious since $P$ and $Q$ are indecomposable projective.
\end{proof}

By using the above lemma, we can get the following results.
\begin{lemma}
\label{G-lem:Hiso}
Let $P,Q \in \ind(\proj\Lambda)$. We have the following two isomorphisms:
\begin{itemize}
\item[(1)]  $i_{Q*}: \Hom_\Lambda(\rad P, \rad Q) \to \Hom_\Lambda(\rad P, Q);$
\item[(2)]  $F: \rad_{\CM\Lambda}(P,Q) \to [\Lambda](\rad P, \rad Q)$ which sends $g$ to $f$ in \eqref{G-basic}.
\end{itemize}
\end{lemma}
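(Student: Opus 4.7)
My plan treats the two parts separately, with (2) essentially a formal consequence of Lemma \ref{G-lem:commdiag} and (1) requiring a short Nakayama-type argument invoking Assumption \ref{G-assumption}.

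For (2), the map $F$ sends $g : P \to Q$ to the restriction $f$ appearing in \eqref{G-basic}. Well-definedness of $F$ on $\rad_{\CM\Lambda}(P,Q)$ follows from Lemma \ref{G-lem:commdiag}(1) together with the equivalence (b)$\Leftrightarrow$(c) in Lemma \ref{G-lem:commdiag}(4), and surjectivity of $F$ is precisely Lemma \ref{G-lem:commdiag}(3). For injectivity, if $F(g) = f = 0$ then the defining identity $i_P g = f i_Q = 0$ shows that $g$ vanishes on $\rad P$, so $g$ factors as $P \twoheadrightarrow \tp P \to Q$. Since $\tp P \in \fl\Lambda$ and $Q \in \CM\Lambda$, the torsion pair $(\fl\Lambda, \CM\Lambda)$ gives $\Hom_\Lambda(\tp P, Q) = 0$, hence $g = 0$.

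For (1), injectivity of $i_{Q*}$ is clear since $i_Q$ is monic. For surjectivity, given $h \in \Hom_\Lambda(\rad P, Q)$, I would show the composite $h \pi_Q : \rad P \to \tp Q$ vanishes, which immediately factors $h$ through $i_Q$. Suppose for contradiction $h\pi_Q \neq 0$; since $\tp Q$ is simple this composite is surjective, so $h(\rad P) + \rad Q = Q$. Then the finitely generated $\Lambda$-module $N := Q/h(\rad P)$ satisfies $(\rad\Lambda)\, N = N$, and Nakayama's lemma (applicable since $\Lambda$ is semiperfect) forces $N = 0$. Hence $h$ itself surjects onto $Q$; since $Q$ is projective, the surjection splits, exhibiting $Q$ as a direct summand of $\rad P$. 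But Assumption \ref{G-assumption} says $\rad P$ is indecomposable and not projective, a contradiction.

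The only conceptually subtle point is this last step: the non-projectivity hypothesis on $\rad P$ in Assumption \ref{G-assumption} is precisely what is needed to forbid a morphism $\rad P \to Q$ from surjecting onto $\tp Q$. Everything else is formal bookkeeping.
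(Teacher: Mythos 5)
Your proof is correct. Part (2) is essentially identical to the paper's: well-definedness and surjectivity from Lemma \ref{G-lem:commdiag}, and injectivity because $f=0$ forces $g$ to kill $\rad P$, hence to factor through $\tp P$, which maps trivially to the torsion-free module $Q$. For part (1) you diverge slightly on surjectivity: the paper simply cites that $i_Q\colon \rad Q\to Q$ is minimal right almost split (Remark \ref{G-rem:almostsplit}(2)) and that $\rad P\ncong Q$ under Assumption \ref{G-assumption}, so any $h\colon\rad P\to Q$, not being a retraction, factors through $i_Q$. You instead re-derive the relevant special case of that almost-split property by hand: if $h$ did not land in $\rad Q$ it would be onto modulo $\rad Q$, Nakayama's lemma would make it onto, and projectivity of $Q$ would split it off as a summand of the indecomposable non-projective module $\rad P$ --- contradiction. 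Your route is more self-contained (it uses only Nakayama's lemma rather than the almost-split machinery), at the cost of a few extra lines; the paper's is shorter but leans on Remark \ref{G-rem:almostsplit}. Both hinge on the same hypothesis from Assumption \ref{G-assumption}, and both are sound.
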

\begin{proof}
(1) Since $i_Q$ is a monomorphism, so is $i_{Q*}:=\Hom_\Lambda(\rad P, i_Q)$.
We show that $i_{Q*}$ is surjective. Indeed, for any $h \in \Hom_\Lambda(\rad P, Q)$, 
since $i_{Q}$ is minimal right almost split and $\rad P \ncong Q$ by Assumption \ref{G-assumption}, there exists $f \in \Hom_\Lambda(\rad P, \rad Q)$ satisfying $h= f  i_{Q}$.

(2) For any $g \in \rad_{\CM\Lambda}(P,Q)$, by Lemma \ref{G-lem:commdiag} (1), we have $f \in \Hom_\Lambda(\rad P, \rad Q)$ making the diagram \eqref{G-basic} commutative.
By Lemma \ref{G-lem:commdiag} (4), $f$ factors through $\proj\Lambda$.
The correspondence $F: g \mapsto f$ is well-defined since the commutative diagram \eqref{G-basic} shows that $g=0$ implies $f=0$.
Moreover, $F$ is surjective by Lemma \ref{G-lem:commdiag} (3) and injective since $f=0$ implies that $g$ factors through $\tp P$ and has to be zero.
\end{proof}

To prove Theorem \ref{G-pro:c2}, we need the following lemma which contributes to calculate the length of hom-sets of radicals of projective modules.
\begin{lemma}
\label{G-lem:seqforlength}
Let $P,Q \in \ind(\proj \lambda)$ satisfying $P \cong   \nu^{-1}Q$.
We have the following short exact sequence:
\begin{equation}
\label{G-eq:seqforlength}
0\to \frac{\Hom_\Lambda(P,Q)}{\rad_{\CM\Lambda}(P,Q)} \to \uHom_\Lambda(\rad P, \rad Q) \to \Ext^1_\Lambda (\tp P,  Q) \to 0.
\end{equation}
\end{lemma}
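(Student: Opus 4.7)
The plan is to derive the displayed sequence by applying $\Hom_\Lambda(-, Q)$ to the short exact sequence
\begin{equation*}
0 \to \rad P \xrightarrow{i_P} P \xrightarrow{\pi_P} \tp P \to 0
\end{equation*}
and then passing to the stable quotient on both sides. Since $\tp P \in \fl\Lambda$ and $Q \in \CM\Lambda$, the torsion-pair property recalled after Definition \ref{G-def:order} gives $\Hom_\Lambda(\tp P, Q) = 0$, so the resulting long exact sequence collapses to
\begin{equation*}
0 \to \Hom_\Lambda(P, Q) \xrightarrow{i_P^*} \Hom_\Lambda(\rad P, Q) \to \Ext^1_\Lambda(\tp P, Q) \to 0.
\end{equation*}

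Next I would invoke the isomorphism $i_{Q*}: \Hom_\Lambda(\rad P, \rad Q) \xrightarrow{\sim} \Hom_\Lambda(\rad P, Q)$ of Lemma \ref{G-lem:Hiso} (1) to rewrite the middle term. This produces an exact sequence
\begin{equation*}
0 \to \Hom_\Lambda(P, Q) \xrightarrow{\phi} \Hom_\Lambda(\rad P, \rad Q) \to \Ext^1_\Lambda(\tp P, Q) \to 0,
\end{equation*}
in which $\phi := i_{Q*}^{-1} \circ i_P^*$ is precisely the map $g \mapsto f$ coming from diagram \eqref{G-basic}.

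Finally I would quotient out the projective-factoring ideals on the first two terms. By Lemma \ref{G-lem:Hiso} (2), $\phi$ restricts to an isomorphism $\rad_{\CM\Lambda}(P,Q) \xrightarrow{\sim} [\Lambda](\rad P, \rad Q)$, and Lemma \ref{G-lem:commdiag} (4) guarantees the converse inclusion: any $g \in \Hom_\Lambda(P,Q)$ with $\phi(g) \in [\Lambda](\rad P, \rad Q)$ already lies in $\rad_{\CM\Lambda}(P,Q)$. A short diagram chase (equivalently, the snake lemma applied to the evident commutative square whose rows are $0 \to \rad_{\CM\Lambda}(P,Q) \to \Hom_\Lambda(P,Q) \to \Hom_\Lambda(P,Q)/\rad_{\CM\Lambda}(P,Q) \to 0$ and $0 \to [\Lambda](\rad P, \rad Q) \to \Hom_\Lambda(\rad P, \rad Q) \to \uHom_\Lambda(\rad P, \rad Q) \to 0$) then delivers the asserted sequence.

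There is no substantive obstacle, since every ingredient is already in place; the one thing to verify carefully is the matching of the two subobjects under $\phi$, which is exactly what Lemma \ref{G-lem:Hiso} (2) together with Lemma \ref{G-lem:commdiag} (4) provides. The hypothesis $P \cong \nu^{-1} Q$ is used only to ensure via Lemma \ref{G-lem:onedim} (3) that $\Ext^1_\Lambda(\tp P, Q) \neq 0$, so that the sequence records non-trivial length information needed for Theorem \ref{G-pro:c2}.
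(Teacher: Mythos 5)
Your argument is correct and is essentially the paper's own proof: the paper assembles the same two rows (the $\Hom_\Lambda(-,Q)$ long exact sequence with $\Hom_\Lambda(\tp P,Q)=0$, and the defining sequence of $\uHom_\Lambda(\rad P,\rad Q)$), connects them by the isomorphisms $i_{Q*}$ and $F$ of Lemma \ref{G-lem:Hiso}, checks the same commutativity $i_Pg=fi_Q$, and concludes by a diagram chase, which is what your snake-lemma formulation packages. Your closing remark is also accurate: the hypothesis $P\cong\nu^{-1}Q$ is not needed for the sequence itself but only makes the $\Ext^1$ term nonzero, matching how the lemma is deployed in case (ii) of the proof of Theorem \ref{G-pro:c2}.
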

\begin{proof}
First, we prove that there is the following commutative diagram:
\begin{equation}
\label{G-eq:main}
\begin{tikzcd}[ampersand replacement=\&, column sep=small]
0 \arrow{r} \& \Hom_\Lambda(P,Q) \arrow{r}{i_P^*}  \&\Hom_\Lambda(\rad P,  Q)\arrow{r}{\rho}  \&\Ext^1_\Lambda (\tp P,  Q) \arrow{r} \& 0 \\
\&\rad_{\CM\Lambda}(P,Q)\arrow[hookrightarrow]{u}[swap]{\iota'} \arrow{d}[above,rotate=90]{\sim}{F} \& \& \& \\
0\arrow{r} \& {[\Lambda]}(\rad P , \rad Q) \arrow{r}{\iota} \& \Hom_\Lambda(\rad P, \rad Q) \arrow{r}\arrow{uu}[above,rotate=90]{\sim}[swap]{i_{Q*}} \&\uHom_\Lambda(\rad P, \rad Q) \arrow[two heads]{uu}[swap]{\phi} \arrow{r} \&0.
\end{tikzcd}
\end{equation}
We get the first row by applying $\Hom_\Lambda(-, Q)$ to $0 \to \rad P \to P \to \tp P \to 0$. Indeed, since $Q$ is a free $R$-module, it follows that $\soc_\Lambda Q=\soc_R Q=0$ and hence, $\Hom_\Lambda(\tp P, Q)=0$. And the second row comes from the definition of $\uHom_\Lambda(-,-)$. 
Moreover, $i_{Q*}$ and $F$ are isomorphisms by Lemma \ref{G-lem:Hiso}.
By Lemma \ref{G-lem:Hiso} and the commutative diagram \eqref{G-basic}, we have $( \iota'  i^*_P )(g)= i_P g =f i_Q = (F  \iota i_{Q*})(g)$. The existence of $\phi$ is easy to deduce and the diagram \eqref{G-eq:main} is commutative.

By an easy diagram chasing, we get the short exact sequence \eqref{G-eq:seqforlength}
\end{proof}

Considering the field $k_P:=\End_\Lambda(P)/\rad_{\CM\Lambda}(P,P)$ for each indecomposable projective module $P$, the following property holds.
\begin{lemma}
\label{G-lem:residuefield}
For any $P \in \ind(\proj\Lambda)$, we have $\End_\Lambda(\tp P) \cong k_P \subset k_{\rad P}.$
\end{lemma}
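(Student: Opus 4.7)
The plan is to treat the two assertions $\End_\Lambda(\tp P)\cong k_P$ and $k_P\subset k_{\rad P}$ separately. Both rest on the observation that since $\Lambda$ is semilocal, $\rad P=(\rad\Lambda)P$, and hence any $\phi\in\End_\Lambda(P)$ stabilises $\rad P$, inducing both a restriction $\phi|_{\rad P}\in\End_\Lambda(\rad P)$ and a quotient $\bar\phi\in\End_\Lambda(\tp P)$.

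For the first assertion, I would study the ring homomorphism $\pi\colon\End_\Lambda(P)\to\End_\Lambda(\tp P)$, $\phi\mapsto\bar\phi$. Surjectivity of $\pi$ is immediate from projectivity of $P$, since any $\psi\in\End_\Lambda(\tp P)$ lifts along $\pi_P\colon P\twoheadrightarrow\tp P$ to some $\phi\in\End_\Lambda(P)$ with $\pi_P\phi=\psi\pi_P$. Since $P$ is indecomposable its endomorphism ring is local, and the non-isomorphisms form $\rad_{\CM\Lambda}(P,P)$; by Nakayama's lemma together with the fact that $\tp P$ is simple, an endomorphism $\phi$ of $P$ is an isomorphism iff $\bar\phi\neq 0$ iff $\bar\phi$ is an isomorphism. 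Therefore $\ker\pi=\rad_{\CM\Lambda}(P,P)$, giving the claimed isomorphism $k_P\xrightarrow{\sim}\End_\Lambda(\tp P)$.

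For the inclusion, I would consider the composite $\Psi\colon\End_\Lambda(P)\xrightarrow{\,r\,}\End_\Lambda(\rad P)\twoheadrightarrow k_{\rad P}$ of the restriction $r$ with the quotient by the radical, and verify that $\ker\Psi=\rad_{\CM\Lambda}(P,P)$; this will produce the desired injection of division rings $k_P\hookrightarrow k_{\rad P}$. The whole step reduces to the following key claim: $\phi|_{\rad P}$ is an automorphism of $\rad P$ if and only if $\phi$ is an automorphism of $P$. The direction $\Leftarrow$ is immediate, since $\phi^{-1}$ also preserves $\rad P$ and restricts to the inverse of $\phi|_{\rad P}$.

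The main obstacle is the converse. Suppose $\phi|_{\rad P}$ is an isomorphism. Applying the Snake Lemma to the endomorphism $(\phi|_{\rad P},\phi,\bar\phi)$ of the short exact sequence $0\to\rad P\to P\to\tp P\to 0$, the vanishing of $\ker\phi|_{\rad P}$ and $\coker\phi|_{\rad P}$ yields $\ker\phi\cong\ker\bar\phi$ and $\coker\phi\cong\coker\bar\phi$. Since $\tp P$ is simple, $\bar\phi$ is either zero or an isomorphism; if it is an isomorphism then so is $\phi$ and the claim follows. If $\bar\phi=0$, then $\ker\phi$ would be a nonzero submodule of $P$ isomorphic to $\tp P$, but $\tp P$ is annihilated by the maximal ideal of $R$ (which is contained in $\rad\Lambda$) and hence is $R$-torsion, while $P$ is $R$-free, a contradiction. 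This proves the key claim, so $\ker\Psi$ consists exactly of the non-isomorphisms in $\End_\Lambda(P)$, namely $\rad_{\CM\Lambda}(P,P)$, completing the proof.
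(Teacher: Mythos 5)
Your proof is correct. Structurally it follows the same two-step outline as the paper (identify $\End_\Lambda(\tp P)$ with $k_P$ via the surjection $\End_\Lambda(P)\to\End_\Lambda(\tp P)$, then embed $k_P$ into $k_{\rad P}$ via restriction to $\rad P$), but the justifications are genuinely different. The paper leans on its Lemma \ref{G-lem:commdiag}: part (1) for the existence of the induced maps on $\rad P$ and $\tp P$, and part (4) for the equivalence ``$h=0$ iff $f\in[\Lambda](\rad P,\rad Q)$ iff $g\in\rad_{\CM\Lambda}(P,Q)$,'' which (combined with $[\Lambda](\rad P,\rad P)\subset\rad_{\CM\Lambda}(\rad P,\rad P)$, using that $\rad P$ is indecomposable non-projective) identifies both kernels. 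You instead prove everything from scratch: Nakayama plus simplicity of $\tp P$ for the first kernel computation, and your ``key claim'' ($\phi$ is an automorphism of $P$ iff $\phi|_{\rad P}$ is an automorphism of $\rad P$) via the snake lemma and the torsion-freeness of $P$ for the second. Your route avoids any mention of the ideal $[\Lambda](-,-)$ of maps factoring through projectives, and in particular does not need the non-projectivity of $\rad P$ for the second step (only its indecomposability, to know $\rad_{\CM\Lambda}(\rad P,\rad P)$ is exactly the set of non-isomorphisms); what it costs is re-deriving facts the paper has already packaged in Lemma \ref{G-lem:commdiag}. Both arguments establish injectivity of $k_P\to k_{\rad P}$ soundly: the paper by noting a nonzero homomorphism of division rings is injective, you by computing the kernel of the composite $\End_\Lambda(P)\to k_{\rad P}$ exactly.
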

\begin{proof}
By Lemma \ref{G-lem:commdiag} (1), there exists a morphism $\End_\Lambda (P) \to \End_\Lambda (\tp P)$ which is surjective since $\tp P$ is simple. The kernel of this morphism is $\rad_{\CM\Lambda}(P,P)$ according to Lemma \ref{G-lem:commdiag} (4). Therefore, we have $\End_\Lambda(\tp P) \cong k_P$.

We have the following commutative diagram
\begin{center}
\begin{tikzcd}
\End_\Lambda (P) \arrow{r} & \End_\Lambda (\rad P) \\
\rad_{\CM\Lambda}(P,P) \arrow{r} \arrow[hook]{u} & \rad_{\CM\Lambda}(\rad P,\rad P)\arrow[hook]{u},
\end{tikzcd}
\end{center}
which induces a morphism $k_P \to k_{\rad P}$ taking cokernels of vertical inclusions.
As the morphism of these two fields sends $\Id_P$ to $\Id_{\rad P}$, it is injective and we have $k_P \subset k_{\rad P}$.
\end{proof}

Now we are ready to give the proof of Theorem \ref{G-pro:c2}.
\begin{proof}[Proof of Theorem \ref{G-pro:c2}]
(i) Assume $P \ncong \nu^{-1} Q$, or equivalently, $Q\ncong \nu P$. For any $f :\rad P \to \rad Q$, by Lemma \ref{G-lem:commdiag} (2), we have the commutative diagram \eqref{G-basic}.
\begin{enumerate}[({i}-i)]
\item Assume $P \ncong Q$, or equivalently, $\tp P \ncong \tp Q$. Then $h=0$ holds, and $f$ factors through $\proj\Lambda$ according to Lemma \ref{G-lem:commdiag} (4). Thus, $ \uHom_\Lambda (\rad P, \rad Q)=0.$
\item Assume $P \cong Q$.
We show that there is an isomorphism $$\uHom_\Lambda (\rad P, \rad P) \to \End_\Lambda (\rad P)/ \rad_{\CM\Lambda} (\rad P, \rad P).$$
Indeed, as $\rad P$ is indecomposable non-projective by Assumption \ref{G-assumption}, we have $[\Lambda](\rad P, \rad P) \subset  \rad_{\CM\Lambda} (\rad P, \rad P)$.
Conversely, if $f \in \End_\Lambda (\rad P)$ is a non-isomorphism, then so is $g$. Hence, since $P$ and $Q$ are projective, $h$ is not an isomorphism. In particular, $h=0$ and by Lemma \ref{G-lem:commdiag} (4), we have $f \in [\Lambda](\rad P, \rad P)$. 

Therefore, we have $\len_{\End_\Lambda(\rad P)} \uHom_\Lambda (\rad P, \rad Q)=1.$
\end{enumerate}

(ii) Assume $P \cong   \nu^{-1}Q$. By Lemma \ref{G-lem:seqforlength}, we have the short exact sequence \eqref{G-eq:seqforlength}.
  \begin{enumerate}[({ii}-i)]
      \item Assume $P \ncong Q $. Then $\Hom_\Lambda(P,Q)/\rad_{\CM\Lambda}(P,Q)=0$.
Hence, we have an isomorphism $$  \Ext^1_\Lambda (\tp P,  Q) \cong \uHom_\Lambda(\rad P, \rad Q).$$
This is a one-dimensional $\End_\Lambda (\tp P)$-vector space by Lemma \ref{G-lem:onedim} (2). So by Lemma \ref{G-lem:residuefield}, it is a one-dimensional $k_{\rad P}$-vector space and we have $k_P \cong k_{\rad P}$. 

Therefore, $\len_{\End_\Lambda(\rad P)}\uHom_\Lambda(\rad P, \rad Q)=1.$
      \item Assume $P \cong  Q$.           
      By Lemma \ref{G-lem:seqforlength}, we have
              \begin{align*}
&\len_{\End_\Lambda(\rad P)}\uHom_\Lambda(\rad P, \rad Q)\\
=\, & \len_{\End_\Lambda(\rad P)}\Ext^1_\Lambda(\tp P,  P) + \len_{\End_\Lambda(\rad P)}  \End_\Lambda(P)/ \rad_{\CM\Lambda}(P,P)= 2. \qedhere
              \end{align*}
    \end{enumerate}
\end{proof}

\section{Reading hom-sets from Auslander-Reiten quivers}
\label{G-s:homspaces}
In this section, we give a simple method to read the hom-set between two objects in $\CM\Lambda$ by looking the related positions of these two objects in the Auslander-Reiten quiver. 
First, we introduce some combinatorial notions which will be used to calculate the length of the hom-sets.

For $x,y\in (\ZZ \Delta)_0$, we define 
$$\delta(y,x)
=\begin{cases}
1, \text{ if } y=x;\\
0, \text{ otherwise.}
\end{cases}$$
\begin{definition}
\label{G-def:hmor}
Let $Q$ be a stable valued translation quiver and $x$ be a fixed vertex in $Q_0$. 

(1) For each $n \in \ZZ$, we define a map $$h_{Q,n}(-, x):Q_0 \to \NN_{\ge 0}$$ as follows.
\begin{itemize}
\item  For $n <0$, $h_{Q,n}(-,x):=0$;
\item $h_{Q,0}(-,x):=\delta(-,x)$;          
\item  For $n>0$, we let $$h'_{Q,n}(y,x):= \sum_{v\in Q_0} d_{yv} h_{Q,n-1}(v,x) -  h_{Q,n-2}(\tau^{-1}y, x)$$ 
$$\and  h_{Q,n}(y,x):= \max\{h'_{Q,n}(y,x), 0\}.$$                          
\end{itemize}

(2) By using $h_{Q,n}(-,x)$, we define 
$$h_Q(-,x): = \sum_{n\ge 0}h_{Q,n}(-,x)$$
$$\and H_{Q}(x)=\{y\in Q_0 \mid h_{Q} (y,x) >0\}.$$

If there is no danger of confusion, we denote $h_{Q,n}$, $h_Q$ and $H_Q$ by $h_n$, $h$ and $H$ respectively.
\end{definition}

Let us see an example of type $A$.
\begin{example}
Let $x$ be the marked vertex in $(\ZZ E_6)_0$. The set $H(x)$ is the set consisting of all the vertices in the following diagram with positive value and the values on the vertices are the results of $h(-,x)$ acting on the vertices.
\begin{center}
 \textbf{\dots}
   \begin{tikzcd}[column sep=small, row sep=small]
 0\drar & &0\drar&& 1\drar & &0\drar && 1\drar & &1\drar && 0\drar&& 0 \\
 &0\urar\drar&&1\urar\drar& &1\urar\drar& &1\urar\drar& &2\urar\drar& &1\urar\drar& &0\urar\drar\\
0\urar\drar\rar&0\rar&0\urar\drar\rar&0\rar&1\urar\drar\rar&1\rar&2\urar\drar\rar&1\rar&2\urar\drar\rar&1\rar&2\urar\drar\rar& 1\rar&1\urar\drar\rar& 0\rar&0\\ 
 &0\urar\drar &&0\urar\drar& &1\urar\drar& &2\urar\drar& &1\urar\drar& &1\urar\drar& &\node[draw,circle,inner sep=1pt]{1}; \urar\drar\\
0 \urar&&0\urar &&0 \urar&&1 \urar&&1 \urar&&0\urar&&1 \urar&&0
 \end{tikzcd}  \textbf{\dots}
 \end{center}
\end{example}

The following theorem is about how to read hom-sets in $\uCM\Lambda$ from the stable Auslander-Reiten quiver $\underline{\mathfrak{A}}(\Lambda)$ by using the map $h$ defined in Definition \ref{G-def:hmor}.
\begin{theorem}
\label{G-thm:handhom}
Let $\Lambda$ be a representation-finite $R$-order (not necessarily Gorenstein) and $\underline{\mathfrak{A}}(\Lambda)$ the stable Auslander-Reiten quiver of $\Lambda$. For any $X,Y \in \ind(\uCM \Lambda)$, we have 
$$\len_{\End_\Lambda(Y)} \uHom_\Lambda(Y,X)=h_{\underline{\mathfrak{A}}(\Lambda)}(Y,X).$$
In particular, for any $X \in \ind(\uCM \Lambda)$, we have 
$$H_{\underline{\mathfrak{A}}(\Lambda)}(X)=\{ Y  \in \ind(\uCM\Lambda) \mid  \uHom_\Lambda(Y,X)\neq 0\}.$$ 
\end{theorem}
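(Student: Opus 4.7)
The plan is to identify $h_n(Y,X)$ with the $k_Y$-dimension of the $n$-th graded piece of $\uHom_\Lambda(Y,X)$ with respect to the radical filtration of the category $\uCM\Lambda$. Since $\Lambda$ is representation-finite, $\uCM\Lambda$ is Krull--Schmidt with locally nilpotent categorical radical $\rad_{\uCM\Lambda}$, so every $\uHom_\Lambda(Y,X)$ carries the exhaustive filtration $\uHom_\Lambda(Y,X)=\rad^0_{\uCM\Lambda}(Y,X)\supseteq \rad^1_{\uCM\Lambda}(Y,X)\supseteq\cdots$. Write $F^n(Y,X)$ for the $n$-th graded piece and set $\ell_n(Y,X):=\dim_{k_Y}F^n(Y,X)$. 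It suffices to prove $\ell_n=h_n$ for every $n$, since summing then gives $\sum_n\ell_n=\len_{\End_\Lambda(Y)}\uHom_\Lambda(Y,X)$ on one side and $h(Y,X)$ on the other.

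I would proceed by induction on $n$. The base case $n=0$ is immediate from Krull--Schmidt: $F^0(Y,X)$ equals $k_Y$ when $Y\cong X$ and vanishes otherwise, giving $\ell_0(Y,X)=\delta(Y,X)=h_0(Y,X)$. For $n=1$, the irreducible morphisms between indecomposables in $\uCM\Lambda$ are by definition encoded by the valued arrows of $\underline{\mathfrak{A}}(\Lambda)$, so $\ell_1(Y,X)=\dim_{k_Y}\Irr(Y,X)=d_{YX}=h_1(Y,X)$. For the inductive step, every element of $\rad^n_{\uCM\Lambda}(Y,X)$ factors as an irreducible morphism $Y\to v$ composed with an element of $\rad^{n-1}_{\uCM\Lambda}(v,X)$, yielding a natural surjection
\begin{equation*}
\bigoplus_{v\in \ind(\uCM\Lambda)} \Irr(Y,v)\otimes_{k_v} F^{n-1}(v,X)\twoheadrightarrow F^n(Y,X).
\end{equation*}
The kernel is generated by the mesh relation at $\tau^{-1}Y$: the almost split sequence $0\to Y\to\bigoplus_v v^{d'_{Yv}}\to \tau^{-1}Y\to 0$ (which matches Remark \ref{G-rem:almostsplit} after using the translation-quiver identity $d_{v,\tau^{-1}Y}=d'_{Yv}$) has composition zero in $\uCM\Lambda$, and lifting this relation through the filtration contributes exactly a copy of $F^{n-2}(\tau^{-1}Y,X)$ to the kernel. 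The valuation identity $a_Y d_{Yv}=d'_{Yv}a_v$ built into Section \ref{G-def:translation} converts $k_v$-lengths into $k_Y$-lengths with multiplicity $d_{Yv}$, producing the recursion
\begin{equation*}
\ell_n(Y,X)=\max\!\left\{\sum_v d_{Yv}\,\ell_{n-1}(v,X)-\ell_{n-2}(\tau^{-1}Y,X),\;0\right\},
\end{equation*}
which matches the defining recursion of $h_n$ and closes the induction.

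The principal technical obstacle is the truncation by $\max$ with $0$. \emph{A priori}, the formal alternating sum on the right could become negative, which would correspond to the graded piece $F^n(Y,X)$ having already collapsed. One must therefore show, in tandem with the induction, that once the formal expression attempts to go negative the piece $F^n(Y,X)$ is genuinely zero (and remains zero thereafter). Representation-finiteness guarantees that $\uHom_\Lambda(Y,X)$ has finite length so the filtration stabilises, but the precise vanishing requires a careful analysis of how the mesh relations saturate. This is where I would invoke the general machinery of \cite{tauone}, or alternatively lift to the universal cover $\ZZ\Delta$, in which $h_n(Y,X)$ admits a transparent interpretation as counting paths in the mesh category modulo the standard mesh relations. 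A secondary care-point is the compatibility of the various residue fields $k_v$ along neighbouring vertices; this is handled uniformly by the valuation axiom.
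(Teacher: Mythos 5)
Your proposal is correct and follows essentially the same route as the paper: both reduce the computation to the radical filtration of $\uHom_\Lambda(Y,X)$ and both rely on \cite{tauone} for the crucial fact that the graded pieces obey the truncated mesh recursion. The paper simply packages your induction into the single cited statement (Theorem \ref{G-th:tauonerad}) that $\rad^n_{\uCM\Lambda}(-,X)/\rad^{n+1}_{\uCM\Lambda}(-,X)$ is the top of $\uHom_\Lambda(-,\theta_n(X))$ with $\theta_n(X)=\sum_y h_n(y,X)\,y$, which absorbs exactly the truncation-by-zero issue you flag as the principal obstacle.
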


The remaining part of this section is devoted to prove Theorem \ref{G-thm:handhom}.

Let $\ZZ Q_0$ be the free Abelian group generated by $Q_0$. For any $P=\sum_{x\in Q_0} p_x x \in \ZZ Q_0$, we denote $P_+:=\sum_{x\in Q_0} \max\{p_x,0\}x$ and denote by $\supp P$ the subset $\{x\in Q_0 \mid p_x \neq 0\}$ of $Q_0$.

\begin{definition}
\label{G-def:HQbox}
Let $Q$ be a stable valued translation quiver and $\NN_{\ge 0} Q_0$ the free Abelian monoid generated by $Q_0$.
\begin{itemize}
      \item[(1)]  We define a map $\theta: \NN_{\ge 0} Q_0 \to \NN_{\ge 0} Q_0$ by $\theta(x) := \sum_{y \to x \in Q_1} d_{yx} y$.
       \item[(2)]  For $n \in \NN_{\ge 0}$ and $x\in Q_0$, we define $\theta_n(x) \in \NN_{\ge 0} Q_0$ by $$\theta_n(x):=
\begin{cases}
x, &\text{ if } n=0;\\
\theta(x), &\text{ if } n=1;\\
(\theta(\theta_{n-1}(x))-\tau(\theta_{n-2}(x)))_+, & \text{ if } n\ge 2.
\end{cases}$$ 
\end{itemize}
\end{definition}

\begin{lemma}
\label{G-rem:handhq}
Let $Q$ be a stable valued translation quiver. For any $x\in Q_0$ and $n \in \NN_{\ge 0}$, we have $$ \theta_n(x)=\sum_{y\in Q_0} h_{n}(y,x)y \and H(x) = \bigcup_{i\ge 0} \supp \theta_i (x).$$
\end{lemma}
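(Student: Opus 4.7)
The plan is to prove the two identities in turn, with the first one done by a straightforward induction on $n$, and the second one following formally from the definitions once the first is in hand.

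For the first identity $\theta_n(x) = \sum_{y \in Q_0} h_n(y,x)\, y$, I would induct on $n$. The base cases $n=0$ and $n=1$ are direct: $\theta_0(x) = x = \sum_y \delta(y,x)\, y$ matches $h_0(\cdot,x) = \delta(\cdot,x)$, while $\theta_1(x) = \theta(x) = \sum_y d_{yx}\, y$ matches $h_1(y,x) = \max\{d_{yx} - 0,\, 0\} = d_{yx}$ (using $h_{-1} = 0$). For the inductive step, assume $\theta_k(x) = \sum_y h_k(y,x)\, y$ for all $k < n$. Extending $\theta$ linearly to $\ZZ Q_0$, I compute
\begin{align*}
\theta(\theta_{n-1}(x)) &= \sum_{v} h_{n-1}(v,x)\, \theta(v) = \sum_{y} \Bigl(\sum_{v} d_{yv}\, h_{n-1}(v,x)\Bigr) y, \\
\tau(\theta_{n-2}(x)) &= \sum_{v} h_{n-2}(v,x)\, \tau(v) = \sum_{y} h_{n-2}(\tau^{-1}y,\, x)\, y,
\end{align*}
where the second equality uses the change of variable $y = \tau(v)$, which is valid since $Q$ is stable and $\tau$ is a bijection on $Q_0$. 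Subtracting and matching coefficients of $y$ yields exactly $h'_n(y,x)$, so applying $(\,\cdot\,)_+$ coefficientwise gives $\theta_n(x) = \sum_y \max\{h'_n(y,x),\,0\}\, y = \sum_y h_n(y,x)\, y$, completing the induction.

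The second identity is immediate: by the first identity, $y \in \supp \theta_i(x)$ if and only if $h_i(y,x) > 0$, and since all $h_i(y,x)$ are non-negative,
\begin{equation*}
h(y,x) = \sum_{i \ge 0} h_i(y,x) > 0 \iff h_i(y,x) > 0 \text{ for some } i \ge 0,
\end{equation*}
so $H(x) = \{y \mid h(y,x) > 0\} = \bigcup_{i \ge 0} \supp \theta_i(x)$.

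No step here looks genuinely difficult; the only mild subtlety is bookkeeping around the fact that $\theta(\theta_{n-1}(x)) - \tau(\theta_{n-2}(x))$ is an element of $\ZZ Q_0$ rather than $\NN_{\ge 0} Q_0$, and one must be careful that the coefficientwise positive part commutes with the matching of the recursion for $h_n$. The change of variable for $\tau$ also relies essentially on stability of $Q$ (so that $\tau$ is a bijection on all of $Q_0$), which is part of the standing hypothesis of the lemma.
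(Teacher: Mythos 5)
Your proof is correct and follows exactly the route the paper intends: the paper's own proof consists of the single sentence that the claim ``can be shown inductively by using the definitions,'' and your argument is precisely that induction carried out in detail (base cases $n=0,1$, the linear extension of $\theta$ and $\tau$, the change of variable via bijectivity of $\tau$, and the coefficientwise positive part matching $\max\{h'_n,0\}$). Nothing further is needed.
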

\begin{proof}
This can be shown inductively by using the definitions.
\end{proof}

Consider a Gorenstein $R$-order $\Lambda$. The stable Auslander-Reiten quiver $\underline{\mathfrak{A}}(\Lambda)$ of $\Lambda$ is a stable translation quiver. 
We consider the bijection that sends the set of isomorphism classes of objects in $\uCM\Lambda$ to $\NN_{\ge 0} \underline{\mathfrak{A}}(\Lambda)_0=\NN_{\ge 0} \ind(\uCM\Lambda)$ by
$$X\cong \bigoplus_{i=1}^l M_i^{t_i} \mapsto X=\sum_{i=1}^l t_i M_i,$$
where $M_i\in\underline{\mathfrak{A}}(\Lambda)_0$.
Hence, for any $X \in \uCM\Lambda$, we have $\theta_n(X) \in  \uCM\Lambda$ naturally.

\begin{theorem}[{\cite[Theorems 4.1, 7.1]{tauone}}]
\label{G-th:tauonerad}
Let $\Lambda$ be an $R$-order and $X\in \uCM\Lambda$. We have a surjective morphism 
$$\uHom_\Lambda(-, \theta_n(X)) \to \rad^n_{\uCM\Lambda}(-,X)$$
 of functors which induces an isomorphism of functors
$$\rad^n_{\uCM\Lambda}(-,X)/\rad^{n+1}_{\uCM\Lambda}(-,X) \cong \uHom_\Lambda(-,\theta_n(X))/ \rad_{\uCM\Lambda}(-,\theta_n(X)).$$
\end{theorem}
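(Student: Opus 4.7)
The plan is to construct, by induction on $n$, a canonical morphism $\pi_n : \theta_n(X) \to X$ in $\uCM\Lambda$ such that post-composition with $\pi_n$ realizes the asserted surjection, and then to identify its kernel modulo radicals. The only required input is the existence of Auslander-Reiten triangles in $\uCM\Lambda$, which follows from representation-finiteness.

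For $n=0$, set $\pi_0=\mathrm{id}_X$. For $n=1$, reduce by additivity to $X$ indecomposable and let $\pi_1$ be the middle-to-right map in the AR triangle $\tau X \xrightarrow{\iota} \theta_1(X) \xrightarrow{\pi_1} X \to \tau X[1]$. The defining property of AR triangles is that every non-section morphism $Y\to X$ factors through $\pi_1$; for indecomposable $X$ this is precisely the condition of being in the radical, giving surjectivity of $\pi_{1*}$. The kernel of $\pi_{1*}$ is the image of $\iota_* : \uHom_\Lambda(-,\tau X)\to \uHom_\Lambda(-,\theta_1(X))$, and since every component of $\iota$ is an irreducible map, this image sits inside $\rad_{\uCM\Lambda}(-,\theta_1(X))$, yielding the claimed isomorphism on the radical quotient.

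For the inductive step $n \ge 2$, apply the $n=1$ construction to $\theta_{n-1}(X)$: this yields an AR triangle $\tau\theta_{n-1}(X) \to \theta(\theta_{n-1}(X)) \xrightarrow{\rho} \theta_{n-1}(X) \to$, and the composite $\rho\pi_{n-1} : \theta(\theta_{n-1}(X)) \to X$ lies in $\rad^n_{\uCM\Lambda}$. The recursion $\theta_n(X)=(\theta(\theta_{n-1}(X))-\tau\theta_{n-2}(X))_+$ truncates precisely those summands that contribute nothing modulo $\rad^{n+1}$: a summand of $\theta(\theta_{n-1}(X))$ isomorphic to one of $\tau\theta_{n-2}(X)$ factors $\rho\pi_{n-1}$ through the mesh at $\theta_{n-2}(X)$ into a strictly shorter composition, hence may be removed. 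This defines the required $\pi_n : \theta_n(X) \to X$. Surjectivity of $\pi_{n*}$ is then verified by decomposing a general $f\in\rad^n_{\uCM\Lambda}(Y,X)$ as $f=g\,\pi_1$ with $g\in\rad^{n-1}_{\uCM\Lambda}(Y,\theta_1(X))$ (using the $n=1$ case together with the fact that $\rad^{n-1}$ is a two-sided ideal, so composing any map into $\theta_1(X)$ with $\rho\pi_{n-1}$-factored components stays in $\rad^{n-1}$), applying the induction hypothesis to $\theta_1(X)$ in place of $X$ to factor $g$ through $\theta_{n-1}(\theta_1(X))$, and then identifying the effective part of $\theta_{n-1}(\theta_1(X))$ with $\theta_n(X)$ via the mesh recursion.

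For the induced map on radical quotients, the inclusion $\rad_{\uCM\Lambda}(-,\theta_n(X))\,\pi_n \subseteq \rad^{n+1}_{\uCM\Lambda}(-,X)$ is clear. For the converse, an $\alpha : Y \to \theta_n(X)$ with $\alpha\pi_n\in\rad^{n+1}$ but $\alpha$ not in the radical would have a section-like component onto an indecomposable summand $Z\subseteq\theta_n(X)$; the map $Z\hookrightarrow\theta_n(X)\xrightarrow{\pi_n}X$ would then exhibit $\pi_n|_Z$ as a morphism of genuine depth exactly $n$ rather than $\geq n+1$, contradicting the assumption on $\alpha$. The main obstacle is making the mesh-cancellation argument precise: one must verify that the subtraction $-\tau\theta_{n-2}(X)$ together with the truncation $(-)_+$ leaves exactly those summands whose maps to $X$ have genuine depth $n$ modulo $\rad^{n+1}$. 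This requires tracking the valuations $d_{yv}$ through the mesh and a case analysis depending on whether $\theta(\theta_{n-1}(X))$ dominates $\tau\theta_{n-2}(X)$ summand-wise; once this is done, both statements of the theorem follow together.
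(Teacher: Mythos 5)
The paper does not prove this statement at all: it is imported wholesale from \cite[Theorems 4.1, 7.1]{tauone} (Iyama's theory of $\tau$-categories and ladders), so there is no internal proof to compare against. What you have written is a sketch of exactly that ladder construction -- iterate right almost split maps and cancel the $\tau$-translate of the step two back via the mesh relation -- so the \emph{route} is the correct one. The problem is that the sketch stops precisely where the theorem begins. The entire content of the result is that the combinatorial recursion $\theta_n(X)=(\theta(\theta_{n-1}(X))-\tau(\theta_{n-2}(X)))_+$ computes the layers of the radical filtration, and you defer this ("the main obstacle is making the mesh-cancellation argument precise\dots once this is done, both statements of the theorem follow together"). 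Concretely, two things are asserted but not established: (i) that the canonical comparison map $\tau\theta_{n-2}(X)\to\theta(\theta_{n-1}(X))$ built from the left almost split maps splits off its image as a direct summand, so that the formal difference and the truncation $(-)_+$ correspond to an honest complement carrying a map $\pi_n$ to $X$ of radical depth exactly $n$; and (ii) that the discarded summands contribute only to $\rad^{n+1}_{\uCM\Lambda}(-,X)$ while \emph{no} summand of genuine depth $n$ is discarded. Likewise, your reduction "factor $g$ through $\theta_{n-1}(\theta_1(X))$ and identify the effective part with $\theta_n(X)$" hides the same difficulty: $\theta_{n-1}(\theta_1(X))$ and $\theta_n(X)$ differ by summands (already $\theta_1(\theta_1(X))=\theta(\theta(X))\neq\theta_2(X)$), and controlling that discrepancy modulo $\rad^{n+1}$ is again the ladder theorem. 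In \cite{tauone} this is handled by an inductive axiomatization (the ladder of a morphism, with exactness of $(-,C_{n+1})\to(-,C_n)\oplus(-,C_n)\to(-,C_{n-1})$ at each rung), not by a one-line mesh observation, and in the valued setting one must also track the pairs $(d_{yv},d'_{yv})$ since the residue fields $k_Y$ vary.

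Two smaller points. First, the statement is for an arbitrary $R$-order, not a Gorenstein one, so $\uCM\Lambda$ need not be triangulated and "AR triangles in $\uCM\Lambda$" should be replaced by almost split sequences in $\CM\Lambda$ and their induced exact sequences of stable Hom functors; the base case $\uHom(-,\tau X)\to\uHom(-,\theta_1(X))\to\rad_{\uCM\Lambda}(-,X)\to 0$ then needs the (standard but not free) verification that exactness in the middle survives passage to the stable category. Second, you add representation-finiteness as a hypothesis; it is needed for your argument (and holds in the paper's applications) but is not part of the cited statement. Neither of these is fatal, but the missing mesh-cancellation argument is: as it stands the proposal reproduces the strategy of the cited reference without supplying its proof.
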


\begin{proof}[Proof of Theorem \ref{G-thm:handhom}]
As $\Lambda$ is a representation-finite $R$-order, there exists an integer $l$ such that $$\rad^l_{\uCM\Lambda}(-,-)=0.$$
For any $X,Y \in \underline{\mathfrak{A}}(\Lambda)_0$, we have 
\begin{align*}
\len_{\End_\Lambda(Y)} \uHom_\Lambda(Y,X) &= \sum_{n\ge 0} \len_{\End_\Lambda(Y)} (\rad^n_{\uCM\Lambda}(Y,X)/\rad^{n+1}_{\uCM\Lambda}(Y,X))\\
&\overset{\ref{G-th:tauonerad}}{=} \sum_{n\ge 0} \len_{\End_\Lambda(Y)} (\uHom_\Lambda(Y,\theta_n(X))/ \rad_{\uCM\Lambda}(Y,\theta_n(X)))\\
&=\sum_{n\ge 0}(\text{multiplicity of $Y$ in }\theta_n(X))\\
&\overset{\ref{G-rem:handhq}}{=}\sum_{n\ge 0} h_{\underline{\mathfrak{A}}(\Lambda),n}(Y,X)=h_{\underline{\mathfrak{A}}(\Lambda)}(Y,X).\qedhere
\end{align*}
\end{proof}

\section{Combinatorial configuration}
\label{G-s:combinatorial}
In this section, we will give a combinatorial description of configurations by using the map $h$ defined in Definition \ref{G-def:hmor} for valued stable translation quivers.
First, we have the following proposition for Dynkin quivers.
\begin{proposition}
\label{G-lem:h0}
Let $ \Delta$ be a Dynkin diagram and $x \in (\ZZ  \Delta)_0$. There exist a positive integer $m$ and a vertex $y\in (\ZZ \Delta)_0$ such that $$h_{m-1}(-,x)=\delta(-,y) \and h_{n}(-,x )=\begin{cases} h'_{n}(-,x), &\text{ if }n<m;\\ 0,&\text{ if }n\ge m. \end{cases}$$
\end{proposition}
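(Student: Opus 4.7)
My plan is to interpret the recursively defined $h_n(-,x)$ as multiplicities appearing in the knitting procedure within the mesh category $\mathcal{M}_\Delta := k(\ZZ\Delta)$ over a field $k$, and then to deduce termination, non-truncation, and single-vertex shape from the representation theory of Dynkin type.

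Concretely, the argument of Section \ref{G-s:homspaces} carries over to $\mathcal{M}_\Delta$, since Theorem \ref{G-th:tauonerad} (cf.\ \cite{tauone}) only requires a Krull-Schmidt category with almost split sequences. Combined with Lemma \ref{G-rem:handhq}, this yields
$$ \theta_n(x) = \sum_{y \in (\ZZ\Delta)_0} c_n(y,x)\, y, $$
where $c_n(y,x)$ is the multiplicity of $y$ in $\operatorname{rad}^n \Hom_{\mathcal{M}_\Delta}(-,x)/\operatorname{rad}^{n+1}\Hom_{\mathcal{M}_\Delta}(-,x)$. In particular $\theta_n(x) \in \NN_{\ge 0}(\ZZ\Delta)_0$ automatically, so the max-truncation in Definition \ref{G-def:hmor} is never active as long as $\theta_{n-1}(x) \neq 0$, and $h_n(-,x) = h'_n(-,x)$ for all $n < m$. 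This settles the second half of the proposition.

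For termination, i.e.\ the existence of $m$, I would invoke that $\mathcal{M}_\Delta$ is equivalent to the AR-category of the bounded derived category $D^b(kQ)$ for any orientation $Q$ of $\Delta$; since $\Delta$ is Dynkin, all Hom-spaces between indecomposables are finite-dimensional by Gabriel's theorem and the hereditary structure, so each radical filtration of $\Hom(-,x)$ has finite length. For the single-vertex assertion, the Serre functor on $D^b(kQ)$ is $S = \tau[1]$, and Serre duality $\Hom(z,x) \cong D\Hom(x, Sz)$ identifies the deepest nonzero radical layer of $\Hom(-,x)$ with the one-dimensional space $\Hom(S^{-1}x, x) \cong D\End(x)$; hence $\theta_{m-1}(x)$ is supported at the single vertex $y := S^{-1}x \in (\ZZ\Delta)_0$, giving $h_{m-1}(-,x) = \delta(-,y)$.

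The main obstacle I expect is making the transfer of Theorem \ref{G-th:tauonerad} to $\mathcal{M}_\Delta$ (equivalently to $D^b(kQ)$) fully rigorous, as the statement is quoted in Section \ref{G-s:homspaces} in the context of representation-finite $R$-orders. If this transfer becomes awkward, a backup is a direct combinatorial induction: for the infinite families $A_n$, $B_n$, $C_n$, $D_n$ one tracks the support of $\theta_n(x)$ as a rectangular or diamond-shaped region propagating in the grid $\ZZ\Delta$, verifying that it contracts to a single vertex and that no coordinate ever becomes strictly negative before contraction; the exceptional types $E_6$, $E_7$, $E_8$, $F_4$, $G_2$ can then be dispatched by finite computation.
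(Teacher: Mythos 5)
Your primary route has a genuine gap at its central step, and it is precisely the step that carries all the content of the proposition. Even granting the transfer of Theorem \ref{G-th:tauonerad} to the mesh category $k(\ZZ\Delta)$ (or to $D^b(kQ)$), what you obtain is that the coefficients of $\theta_n(x)$ --- which is \emph{defined} with the truncation $(-)_+$ built in --- compute the radical layers of $\Hom(-,x)$. From this you cannot conclude that ``the max-truncation is never active'': the identity $h_n=\max\{h'_n,0\}$ holds by definition whether or not $h'_n$ ever becomes negative, and the radical-layer interpretation of the truncated quantity is silent on the sign of the untruncated one. What actually has to be proved is that $h'_n(y,x)=\sum_v d_{yv}h_{n-1}(v,x)-h_{n-2}(\tau^{-1}y,x)\ge 0$ for all $y$ and all $n<m$; in functorial terms, that the map $\rad^{n-2}(\tau^{-1}y,x)/\rad^{n-1}(\tau^{-1}y,x)\to\bigoplus_v\bigl(\rad^{n-1}(v,x)/\rad^{n}(v,x)\bigr)$ (with the appropriate multiplicities) induced by the mesh ending at $\tau^{-1}y$ is injective. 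This can fail in a general $\tau$-category --- that is exactly why $\theta_n$ carries the truncation in \cite{tauone} --- so it must be verified for $\ZZ\Delta$, and your proposal gives no argument for it. Note that the two sign statements ($h'_n\ge 0$ for $n<m$ and $h'_n\le 0$ for $n\ge m$, the latter being equivalent to $h_n=0$ for $n\ge m$) are precisely what the proof of Lemma \ref{G-lem:hequation} later extracts from this proposition in order to commute the truncation with the sum over a $G$-orbit, so the uniformity cannot be waved away.

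Your Serre-duality observation for the single-vertex claim (the last nonzero radical layer $\rad^{m-1}(-,x)$ is semisimple, hence contained in the socle of $\Hom(-,x)$, which by $\Hom(-,x)\cong D\Hom(x,\nu(-))$ is simple and concentrated at $\nu^{-1}x$) is sound modulo the transfer, and it is consistent with the paper's later identification $\omega=\nu^{-1}$; the finiteness argument for the existence of $m$ is likewise fine. But the paper does not take this route: its proof of the proposition is exactly your backup plan, namely a direct case-by-case computation of $\supp\theta_n(x)$ (the rectangle of Figure \ref{G-serrebox} for type $A$, Figure \ref{G-HboxB} for types $B$ and $C$, the two cases of Section \ref{G-s:typeD} for type $D$, and finite computation for $E$, $F$, $G$), which simultaneously establishes the nonnegativity, the sign change at $n=m$, and the concentration at a single vertex. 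If you want the conceptual route to stand on its own, the missing injectivity/nonnegativity has to be supplied, e.g.\ by an explicit description of the radical series of $\Hom(-,x)$ in $D^b(kQ)$ --- at which point you are doing essentially the same case analysis.
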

\begin{proof}
For each Dynkin diagram, the assertion is easy to check by calculation. 
(Computations of Sections \ref{G-s:typeA}, \ref{G-s:typeBC} and \ref{G-s:typeD} will make it clear.
See Figure \ref{G-serrebox} for type $A$ in Section \ref{G-s:typeA}, Figure \ref{G-HboxB} for type $B$, $C$ in Sections \ref{G-s:typeBC}, Case 1 and Case 2 in Section \ref{G-s:typeD} for type $D$.)
\end{proof}
By using this proposition, we define some morphisms as follows.
\begin{definition}
Let $\Delta$ be a Dynkin diagram and $G\subset \Aut(\ZZ\Delta)$ a weakly admissible group.

(1) For each $x \in (\ZZ  \Delta)_0$, we define $\omega(x):=y$, where $y$ is the vertex satisfying $h_{m-1}(-,x)=\delta(-,y)$ in Proposition \ref{G-lem:h0}. 
Then $\omega$ gives an automorphism of $\ZZ\Delta$.

(2) Since $\omega (g x)= g(\omega x)$ holds for each $g \in G$ and $x \in (\ZZ\Delta)_0$,
then $\omega$ induces an automorphism of the quotient quiver $\ZZ\Delta/G$ defined in Subsection \ref{G-def:translation} given by $\omega(Gx)=G(\omega(x))$ for any $x\in(\ZZ\Delta)_0$.
\end{definition}
For any $x \in (\ZZ\Delta)_0$, we have
\begin{equation}
\label{G-eq:hdynkin}
h_{\ZZ\Delta}(\omega(x), x)=1.
\end{equation}
By using $\omega$ and the map $h$, we define combinatorial configurations as follows.
\begin{definition}
\label{G-def:configuration}
Let $Q$ be a stable valued translation quiver of the form $\ZZ\Delta/G$, where $\Delta$ is a Dynkin diagram and $G\subset \Aut(\ZZ\Delta)$ is a weakly admissible group.
A \emph{configuration} $C$ of $Q$ is a set of vertices satisfying the following two conditions: 

    (C1) For any vertex $x \in Q_0$, there exists a vertex $c \in C$ such that $h(x,c)>0$.  In other words, we have $$Q_0 =\bigcup_{c\in C}H(c).$$
    
     (C2) $\omega(C)=C$ holds. Moreover, for any vertices $c,d \in C$, we have $$h(d,  c)=
\begin{cases}
2, &\text{ if }   d =c = \omega( c);\\
1, &\text{ if } d=c \neq\omega( c);\\
1, &\text{ if }  d=\omega( c) \neq c;\\
0,  &\text{ otherwise}.
\end{cases}$$
\end{definition}

\begin{remark}
\label{G-rem:cyclic}
When $Q=\ZZ\Delta$, then (C2) is equivalent to $C \cap H(c)=\{c, \omega(c)\}$ by \eqref{G-eq:hdynkin}.
\end{remark}

Now we compare the maps $h_{\ZZ\Delta}$ and $h_{\ZZ\Delta/G}$ in the following lemma. 
\begin{lemma}
\label{G-lem:hequation}
Let $ \Delta$ be a Dynkin diagram and $G\subset \Aut(\ZZ  \Delta)$ a weakly admissible group.
Consider the canonical map $\pi: (\ZZ\Delta)_0 \to (\ZZ\Delta)_0/G$ associated to the translation quiver $\ZZ  \Delta$ and $G$. 
For any $x,y \in (\ZZ  \Delta)_0$, we have $$h_{\ZZ  \Delta/G}(\pi y, \pi x)=\sum_{ y' \in Gy} h_{\ZZ  \Delta}(y', x).$$
\end{lemma}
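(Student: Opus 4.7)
The plan is to establish the graded refinement
\begin{equation*}
h_{\ZZ\Delta/G,n}(\pi y,\pi x)=\sum_{y'\in Gy}h_{\ZZ\Delta,n}(y',x)
\end{equation*}
for every $n\ge 0$ by strong induction on $n$, and then sum over $n$ to obtain the lemma. Write $\tilde h_n:=h_{\ZZ\Delta,n}$ and $\bar h_n:=h_{\ZZ\Delta/G,n}$, with analogous notation $\tilde h'_n$, $\bar h'_n$ for the unclipped variants from Definition \ref{G-def:hmor}. The base cases $n=0,1$ follow directly from the definitions, using two observations: the $G$-action on $(\ZZ\Delta)_0$ is free (a consequence of weak admissibility), and one has $d_{\pi y,\pi v}=\sum_{v'\in Gv}d_{y,v'}$, where weak admissibility in fact forces at most one summand to be nonzero.

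For the inductive step, I would substitute the inductive hypothesis into the recursion for $\bar h'_n$ and use the relations $\tau\pi=\pi\tau$ together with the $G$-equivariance $\tilde h_k(gy,gx)=\tilde h_k(y,x)$ (itself an immediate consequence of $G$ acting by automorphisms of the valued translation quiver $\ZZ\Delta$). A direct rearrangement then yields the raw identity
\begin{equation*}
\bar h'_n(\pi y,\pi x)=\sum_{y'\in Gy}\tilde h'_n(y',x).
\end{equation*}

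The main obstacle is that the clipping $h_n=\max(h'_n,0)$ in Definition \ref{G-def:hmor} does not commute with summation in general, so we need both sides of the graded identity to have their clippings activated in tandem. To handle this, invoke Proposition \ref{G-lem:h0}: there exists $m=m(x)\ge 1$ such that $\tilde h'_k(-,x)=\tilde h_k(-,x)\ge 0$ for all $k<m$, while $\tilde h_k(-,x)=0$ for all $k\ge m$. The key point is that $m$ depends only on $x$, so it governs every term $\tilde h_n(y',x)$ uniformly as $y'$ ranges over $Gy$. Split into two cases. For $n<m$, every summand $\tilde h'_n(y',x)=\tilde h_n(y',x)\ge 0$, so the sum is nonnegative and the raw identity upgrades to $\bar h_n=\bar h'_n=\sum_{y'\in Gy}\tilde h_n(y',x)$. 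For $n\ge m$, every summand satisfies $\tilde h'_n(y',x)\le 0$ (equal to $0$ when $n>m$ because both $\tilde h_{n-1}(-,x)$ and $\tilde h_{n-2}(-,x)$ vanish, and $\le 0$ when $n=m$ because $\tilde h_n=0$ is exactly $\max(\tilde h'_n,0)=0$). Hence the sum is $\le 0$, giving $\bar h_n=0=\sum_{y'\in Gy}\tilde h_n(y',x)$. Summing the graded identity over $n$ completes the proof.
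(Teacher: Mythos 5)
Your proposal is correct and follows essentially the same route as the paper's proof: both establish the graded identity $h_{\ZZ\Delta/G,n}(\pi y,\pi x)=\sum_{y'\in Gy}h_{\ZZ\Delta,n}(y',x)$ by induction on $n$, pass through the unclipped quantities $h'_n$ using the identity for $d$ on the quotient, and then invoke Proposition \ref{G-lem:h0} to note that all summands $h'_{\ZZ\Delta,n}(y',x)$ share the same sign (governed by the threshold $m$ depending only on $x$), so that taking the positive part commutes with the sum. The only cosmetic difference is that you sum $d$ over the $G$-orbit in the second argument rather than the first, which is equivalent since $G$ acts by automorphisms.
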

\begin{proof}
According to Definition \ref{G-def:hmor}, it is sufficient to prove that $$h_{\ZZ  \Delta/G,n}(\pi y, \pi x)=\sum_{y' \in Gy} h_{\ZZ  \Delta,n}(y', x)$$ holds for any $x,y\in (\ZZ  \Delta)_0$ and $n\ge 0$. We use induction on $n$.

This is clear for $n=0$.

By the definition of quotients of valued translation quivers by weakly admissible automorphism groups in Subsection \ref{G-def:translation}, we have 
$$d_{\pi y \pi v}=\sum_{  y' \in Gy} d_{y' v}.$$ 
Assume that $n\ge 1$. Suppose that $h_{\ZZ  \Delta/G,i}(\pi y, \pi x)=\sum_{y' \in Gy} h_{\ZZ  \Delta,i}(y', x)$ holds for any $i\le n-1$.
We have 
\begin{align*}
\sum_{\pi v\in (\ZZ\Delta/G)_0} d_{\pi y \pi v} h_{\ZZ\Delta/G,n-1}(\pi v, \pi x)
&= \sum_{\pi v\in (\ZZ\Delta/G)_0} d_{\pi y \pi v}  \sum_{v' \in Gv} h_{\ZZ  \Delta,n-1}(v', x)\\
&= \sum_{v\in (\ZZ\Delta)_0} d_{\pi y \pi v} h_{\ZZ  \Delta,n-1}(v, x)\\
&= \sum_{v \in (\ZZ\Delta)_0} \left(\sum_{y' \in Gy} d_{y' v}\right) h_{\ZZ  \Delta,n-1}(v, x)\\
 &=\sum_{y' \in Gy}\sum_{v \in (\ZZ\Delta)_0}d_{y' v} h_{\ZZ  \Delta,n-1}(v, x),
\end{align*}
\begin{align*}
\and h_{\ZZ  \Delta/G,n-2}(\pi (\tau^{-1}y), \pi x) &= \sum_{y' \in G(\tau^{-1}y)} h_{\ZZ  \Delta,n-2}(y', x)\\
&=\sum_{y' \in Gy} h_{\ZZ  \Delta,n-2}(\tau^{-1}y', x).
\end{align*}

By definition, we get 
\begin{align*}
h'_{\ZZ  \Delta/G,n}(\pi y, \pi x)&=\sum_{\pi v\in (\ZZ\Delta/G)_0} d_{\pi y \pi v} h_{\ZZ\Delta/G,n-1}(\pi v, \pi x) - h_{\ZZ  \Delta/G,n-2}(\pi (\tau^{-1}y), \pi x)\\
&= \sum_{y' \in Gy}\left(\sum_{v \in (\ZZ\Delta)_0}d_{y' v} h_{\ZZ  \Delta,n-1}(v, x)- h_{\ZZ  \Delta,n-2}(\tau^{-1}y', x)\right)\\
&=\sum_{y' \in Gy}h'_{\ZZ  \Delta,n}(y', x).
\end{align*}
By Proposition \ref{G-lem:h0}, there exists an integer $m$ such that $$\begin{cases}
h'_{\ZZ  \Delta,n}(-, x)\ge 0, &\text{ if } n<m;\\
h'_{\ZZ  \Delta,n}(-, x)\le 0,&\text{ if } n\ge m.
\end{cases}
$$
Therefore, we have
\begin{align*}
h_{\ZZ  \Delta/G,n}(\pi y, \pi x) &=\max\{h'_{\ZZ  \Delta/G,n}(\pi y, \pi x), 0\}\\
&=\max\left\{\sum_{y' \in Gy}h'_{\ZZ  \Delta,n}(y', x),0\right\}\\
&=\sum_{y' \in Gy}\max\{h'_{\ZZ  \Delta,n}(y', x), 0\}\\
&=\sum_{y' \in Gy} h_{\ZZ  \Delta,n}(y', x).\qedhere
\end{align*}
\end{proof}

By Lemmas \ref{G-rem:handhq}, \ref{G-lem:hequation} and Proposition \ref{G-lem:h0}, we get the following lemma by direct calculation.
\begin{lemma}
\label{G-lem:omegaforquotient}
Let $Q$ be a stable translation quiver of the form $\ZZ\Delta /G$, where $\Delta$ is a Dynkin diagram and $G \subset \Aut(\ZZ\Delta)$ is a weakly admissible group.
For any $X \in Q_0$, we have $$h_{\ZZ\Delta/G, m-1}(-,X)= \delta(-, \omega X) \and \omega X = \theta_{m-1} (X),$$ where $m$ is the same integer as in Proposition \ref{G-lem:h0} for $\ZZ\Delta$ and $x \in \pi^{-1}(X)$. Moreover, we get $$h_{\ZZ\Delta/G, k}(-,X)=0 \and \theta_k (X) =0 $$ for any $k \ge m$.
\end{lemma}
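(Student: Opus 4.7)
The proof will be a direct combination of Lemmas \ref{G-lem:hequation}, \ref{G-rem:handhq} and Proposition \ref{G-lem:h0}, so the plan is essentially to unwind the definitions and verify that no collisions occur in the orbit sums.

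First I would fix $x\in\pi^{-1}(X)$ and apply Lemma \ref{G-lem:hequation} to rewrite, for any $Y=\pi y\in Q_0$ and any $n\ge 0$,
\begin{equation*}
h_{\ZZ\Delta/G,n}(Y,X)\;=\;\sum_{y'\in Gy} h_{\ZZ\Delta,n}(y',x).
\end{equation*}
For $k\ge m$, Proposition \ref{G-lem:h0} gives $h_{\ZZ\Delta,k}(-,x)=0$, so the sum vanishes termwise and we immediately obtain $h_{\ZZ\Delta/G,k}(-,X)=0$. For $n=m-1$, Proposition \ref{G-lem:h0} gives $h_{\ZZ\Delta,m-1}(-,x)=\delta(-,\omega x)$, so the above sum becomes $\sum_{y'\in Gy}\delta(y',\omega x)$.

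Next I would argue that this sum equals $\delta(Y,\omega X)$. Clearly it is nonzero iff $\omega x\in Gy$, i.e.\ iff $\pi y=\pi(\omega x)=\omega(\pi x)=\omega X$. The key observation is that when it is nonzero it equals exactly $1$: if both $g_1 y=\omega x$ and $g_2 y=\omega x$, then $g_2^{-1}g_1$ fixes $y$, and the condition $z\ne gz$ for all $g\ne 1$ (built into weak admissibility, as recalled in Subsection \ref{G-def:translation}) forces $g_1=g_2$. Hence $h_{\ZZ\Delta/G,m-1}(-,X)=\delta(-,\omega X)$.

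Finally, the statements about $\theta_k$ follow by Lemma \ref{G-rem:handhq}, which identifies $\theta_k(X)=\sum_{Y\in Q_0} h_{\ZZ\Delta/G,k}(Y,X)\,Y$. Plugging in what we just proved yields $\theta_{m-1}(X)=\omega X$ and $\theta_k(X)=0$ for $k\ge m$. The only step that requires any thought is the \emph{collision-free} argument in the middle paragraph; everything else is a mechanical chain of substitutions, so I do not expect serious obstacles.
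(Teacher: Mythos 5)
Your proof is correct and follows exactly the route the paper indicates: the paper's ``proof'' is just the one-line remark that the lemma follows from Lemmas \ref{G-rem:handhq}, \ref{G-lem:hequation} and Proposition \ref{G-lem:h0} by direct calculation, and your write-up is precisely that calculation (note only that the level-wise identity $h_{\ZZ\Delta/G,n}(\pi y,\pi x)=\sum_{y'\in Gy}h_{\ZZ\Delta,n}(y',x)$ is established inside the proof of Lemma \ref{G-lem:hequation} rather than in its statement, and that your collision argument is automatic since the sum runs over the orbit as a set).
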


The definitions about configurations and $\omega$ are completely combinatorial, we show in the following proposition that the automorphism $\omega$ also has some categorical meaning.
Recall that in Theorem \ref{G-th:structure}, for any representation-finite Gorenstein $R$-order $\Lambda$, the stable Auslander-Reiten quiver $\underline{\mathfrak{A}}(\Lambda)$ is isomorphic to $ \ZZ \Delta/G$ for some Dynkin diagram $\Delta$ and some weakly admissible group $G \subset \Aut(\ZZ\Delta)$. 
\begin{proposition}
Let $\Lambda$ be a representation-finite Gorenstein $R$-order with $\underline{\mathfrak{A}}(\Lambda) \cong \ZZ \Delta/G$. For any $X \in \ind(\uCM \Lambda)$, we have $\omega X = \nu^{-1} X$.
\end{proposition}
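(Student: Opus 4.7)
The plan is to give a categorical characterization of $\omega X$ using Theorem \ref{G-th:tauonerad}, and then match it with $\nu^{-1} X$ through AR duality. First, I would extract the combinatorial characterization. By Lemma \ref{G-lem:omegaforquotient} applied to $\underline{\mathfrak{A}}(\Lambda) \cong \ZZ\Delta/G$, there exists a positive integer $m$ with $\theta_{m-1}(X) = \omega X$ (as an indecomposable object of $\uCM\Lambda$) and $\theta_m(X) = 0$. Combining with Theorem \ref{G-th:tauonerad} yields $\rad^m_{\uCM\Lambda}(-,X) = 0$ together with the isomorphism
\[
\rad^{m-1}_{\uCM\Lambda}(Y,X) \cong \uHom_\Lambda(Y,\omega X)/\rad_{\uCM\Lambda}(Y,\omega X),
\]
which is nonzero precisely when $Y \cong \omega X$. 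Hence $\omega X$ is characterized as the unique indecomposable $Y \in \ind(\uCM\Lambda)$ satisfying $\rad^{m-1}_{\uCM\Lambda}(Y, X) \ne 0$.

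Next, AR duality (Proposition \ref{G-pro:nakayama}(2)) gives $\uHom_\Lambda(\nu^{-1}X, X) \cong \Dk\End_\Lambda(X) \ne 0$, so $\nu^{-1}X \in H(X)$. To finish, it is enough to strengthen this nonvanishing to $\rad^{m-1}_{\uCM\Lambda}(\nu^{-1}X, X) \ne 0$, since then $\nu^{-1}X \cong \omega X$ by the uniqueness above.

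The main obstacle is this last step, and my strategy is to show that AR duality reverses the radical filtration on Hom-spaces. The AR duality isomorphism is induced by the pairing
\[
\uHom_\Lambda(Y,X) \times \uHom_\Lambda(X,\nu Y) \xrightarrow{\circ} \uHom_\Lambda(Y,\nu Y) \xrightarrow{\operatorname{tr}_Y} \Dk(R),\qquad (f,g) \mapsto \operatorname{tr}_Y(g\circ f),
\]
where $\operatorname{tr}_Y$ is the functional determined by the iso $\uHom_\Lambda(Y,\nu Y) \cong \Dk\End_\Lambda(Y)$ via evaluation at $\Id_Y$. Since $\rad^a \cdot \rad^b \subseteq \rad^{a+b}$ and $\rad^m_{\uCM\Lambda}(Y, \nu Y) = 0$ (by the first paragraph applied to $\nu Y$ in place of $X$), the pairing satisfies the orthogonality $\rad^n_{\uCM\Lambda}(Y, X) \perp \rad^{m-n}_{\uCM\Lambda}(X, \nu Y)$. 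A length count using Theorem \ref{G-thm:handhom} together with the $R$-length preservation of $\Dk$ shows that the pairing restricts to a perfect pairing on the graded pieces of the two filtrations, so the orthogonality is in fact an equality of subspaces. Applying this with $n = m-1$ and $Y = \nu^{-1}X$ (so $\nu Y = X$) yields
\[
\rad^{m-1}_{\uCM\Lambda}(\nu^{-1}X, X) \cong \Dk\bigl(\End_\Lambda(X)/\rad \End_\Lambda(X)\bigr) = \Dk(k_X) \ne 0,
\]
completing the identification $\nu^{-1}X \cong \omega X$.
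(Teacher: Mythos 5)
Your first paragraph is correct and coincides with the paper's starting point: by Lemma \ref{G-lem:omegaforquotient} and Theorem \ref{G-th:tauonerad}, $\omega X$ is the unique indecomposable $Y$ with $\rad^{m-1}_{\uCM\Lambda}(Y,X)\neq 0$, and $\rad^m_{\uCM\Lambda}(-,X)=0$. The gap is in the last step. The orthogonality you derive, $\rad^{m-1}_{\uCM\Lambda}(\nu^{-1}X,X)\perp\rad_{\uCM\Lambda}(X,X)$, only yields the inclusion $\rad^{m-1}_{\uCM\Lambda}(\nu^{-1}X,X)\subseteq(\rad\uEnd_\Lambda(X))^{\perp}$, i.e.\ an \emph{upper} bound on the space you need to prove nonzero; it is perfectly consistent with $\rad^{m-1}_{\uCM\Lambda}(\nu^{-1}X,X)=0$. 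To upgrade this to equality you invoke a ``perfect pairing on the graded pieces,'' but that requires the layerwise length identity $\len\bigl(\rad^n/\rad^{n+1}(\nu^{-1}X,X)\bigr)=\len\bigl(\rad^{m-1-n}/\rad^{m-n}(X,X)\bigr)$ for each $n$. Theorem \ref{G-thm:handhom} and the exactness of $\Dk$ only give the equality of \emph{total} lengths $\len_R\uHom_\Lambda(\nu^{-1}X,X)=\len_R\uEnd_\Lambda(X)$, and a sum of the one-sided inequalities coming from orthogonality does not force termwise equality. Worse, the case $n=m-1$ of the layerwise identity reads $h_{m-1}(\nu^{-1}X,X)=h_0(X,X)=1$, i.e.\ $\nu^{-1}X=\omega X$ by Lemma \ref{G-lem:omegaforquotient} --- exactly the statement to be proved. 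So the decisive step is circular as written.

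What is missing is a genuine ``nondegeneracy in the top radical layer'' input, and this is what the paper supplies by working over $\Gamma:=\uEnd_\Lambda(M)$ for $M=\bigoplus_{Y\in\ind(\uCM\Lambda)}Y$: since $\uHom_\Lambda(\nu^{-1}X,M)$ is an indecomposable projective $\Gamma^{\operatorname{op}}$-module, AR duality gives that $\soc_{\Gamma}\uHom_\Lambda(M,X)\cong\Dk\tp_{\Gamma^{\operatorname{op}}}\uHom_\Lambda(\nu^{-1}X,M)\cong\tp_{\Gamma}\uHom_\Lambda(M,\nu^{-1}X)$ is \emph{simple}; on the other hand $\rad^{m-1}_{\uCM\Lambda}(M,X)$ is a nonzero submodule killed by $\rad\Gamma$, hence sits inside that simple socle and must equal it, identifying $\omega X$ with $\nu^{-1}X$. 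If you want to keep your pairing formulation, you would need to prove the analogous fact --- that the socle of $\uHom_\Lambda(M,X)$ as a $\Gamma$-module is simple and concentrated at $\nu^{-1}X$ --- before any length count can do the rest.
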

\begin{proof}
Fix any $X \in \ind(\uCM\Lambda)$. 
We denote $\bigoplus_{Y\in \ind(\uCM\Lambda)}Y$ by $M$.
Then $\uHom_\Lambda (\nu^{-1} X,M)$ is an indecomposable projective $\uEnd_\Lambda(M)^{\operatorname{op}}$-module. We denote by $ \tp_{\uEnd_\Lambda(M)^{\operatorname{op}}} \uHom_\Lambda(\nu^{-1}X,M)$ the top of $ \uHom_\Lambda(\nu^{-1} X,M)$.
By Proposition \ref{G-pro:nakayama} (2), we have $\uHom_\Lambda(\nu^{-1}X,M)\cong \Dk\uHom_\Lambda(M, X)$ as $\uEnd_\Lambda(M)^{\operatorname{op}}$-modules by functoriallity.
So we have \begin{align*}
\soc_{\uEnd_\Lambda(M)} \uHom_\Lambda(M, X) & \cong \soc_{\uEnd_\Lambda(M)} \Dk \uHom_\Lambda(\nu^{-1}X,M) \\
& \cong \Dk \tp_{\uEnd_\Lambda(M)^{\operatorname{op}}} \uHom_\Lambda(\nu^{-1}X,M),
\end{align*} 
where $\soc_{\uEnd_\Lambda(M)} \uHom_\Lambda(M, X)$ is the socle of $ \uHom_\Lambda(M, X)$.
Moreover, we have $$\Dk \tp_{\uEnd_\Lambda(M)^{\operatorname{op}}} \uHom_\Lambda(\nu^{-1}X,M) \cong  \tp_{\uEnd_\Lambda(M)} \uHom_\Lambda(M,\nu^{-1}X).$$

Thanks to Theorem \ref{G-th:tauonerad}, we have the following surjective morphism
$$\uHom_\Lambda(-,\theta_n(X)) \to \rad^n_{\uCM\Lambda}(-,X)$$
which induces an isomorphism of functors 
\begin{equation*}
\rad^n_{\uCM\Lambda}(-,X)/\rad^{n+1}_{\uCM\Lambda}(-,X) \cong \uHom_\Lambda(-,\theta_n(X))/ \rad_{\uCM\Lambda}(-,\theta_n(X)).
\end{equation*}
By Lemma \ref{G-lem:omegaforquotient}, $$\theta_{m-1}(X) \neq 0 \and \theta_m (X)=0$$ hold.
Hence, we get $$\rad^{m-1}_{\uCM\Lambda}(M,X) \neq 0  \and \rad^m_{\uCM\Lambda}(M,X)=0.$$

Thus, we have 
\begin{align*}
 & \tp_{\uEnd_\Lambda(M)} \uHom_\Lambda(M,\omega X)  \cong \rad_{\uCM\Lambda}^{m-1} (M, X)\\
\subset  & \soc_{\uEnd_\Lambda(M)} \uHom_\Lambda(M, X)  \cong \tp_{\uEnd_\Lambda(M)} \uHom_\Lambda(M,\nu^{-1}X).
\end{align*}
Therefore, $\uHom_\Lambda(M,\omega X) \cong \uHom_\Lambda(M,\nu^{-1}X)$ as an $\uEnd_\Lambda(M)$-module. As $\add M$ is dense in $\uCM\Lambda$, we have $\uHom_\Lambda(-,\omega X) \cong \uHom_\Lambda(-,\nu^{-1}X)$. By Yoneda Lemma, we get $\omega X \cong \nu^{-1} X$.
\end{proof}

By using the above proposition and the categorial properties of the radicals we showed in Section \ref{G-s:categorical}, we can prove the following main theorem now.
\begin{theorem}
\label{G-thm:mainthm1}
Let $R$ be a complete discrete valuation ring and $\Lambda$ a ring-indecomposable represen\-tation-finite Gorenstein $R$-order. 
Assume the following conditions satisfied:
             \begin{itemize}
                    \item $\rad P$ is indecomposable and non-projective for any $P\in \ind(\proj \Lambda)$;
                    \item $\rad P\ncong \rad Q$ when $P \ncong Q \in \ind(\proj\Lambda)$. 
             \end{itemize}
Then the Auslander-Reiten quiver $\mathfrak{A}(\Lambda)$ of $\CM\Lambda$ is isomorphic to $(\ZZ\Delta/G)_C,$ where $\Delta$ is a Dynkin diagram, $G\subset \Aut(\ZZ\Delta)$ is a weakly admissible group and $ C$ is a configuration of $\ZZ\Delta/G$.
\end{theorem}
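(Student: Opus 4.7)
The plan is to carry out the two-part strategy (A)+(B) outlined in the introduction: combine the categorical analysis of the radicals of indecomposable projectives from Section \ref{G-s:categorical} with the combinatorial hom-formula of Theorem \ref{G-thm:handhom} in order to verify the axioms (C1) and (C2) of Definition \ref{G-def:configuration} for the distinguished set
\[
C_\Lambda := \{\rad P \mid P \in \ind(\proj\Lambda)\}.
\]

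First I would invoke Theorem \ref{G-th:structure} to write $\underline{\mathfrak{A}}(\Lambda)$ as a disjoint union of components of the form $\ZZ\Delta/G$. Hypothesis (a) ensures that each element of $C_\Lambda$ is an indecomposable non-projective Cohen--Macaulay module, and hypothesis (b) ensures the assignment $P \mapsto \rad P$ is injective, so $C_\Lambda$ is an honestly indexed subset of the vertices of $\underline{\mathfrak{A}}(\Lambda)$. Next I would deduce (C1) by applying Theorem \ref{G-pro:c1}: for every non-projective $X \in \ind(\CM\Lambda)$ there is an indecomposable projective injective hull $P$ with $\uHom_\Lambda(X,\rad P)\neq 0$, and Theorem \ref{G-thm:handhom} then forces $h_{\underline{\mathfrak{A}}(\Lambda)}(X,\rad P) > 0$. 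As a consequence every component of the stable quiver meets $C_\Lambda$; together with the ring-indecomposability of $\Lambda$ this collapses $\underline{\mathfrak{A}}(\Lambda)$ to a single $\ZZ\Delta/G$.

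For (C2) there are two things to check. The invariance $\omega(C_\Lambda) = C_\Lambda$ follows from the identification $\omega X \cong \nu^{-1} X$ (the proposition just preceding the theorem), combined with the isomorphism $\nu^{-1}(\rad P) \cong \rad(\nu^{-1} P)$ in $\uCM\Lambda$, which in turn is obtained by applying the exact Nakayama functor $\nu$ to the radical sequence of $\nu^{-1}P$ and using that $\nu$ carries $(\rad\Lambda)\cdot(\nu^{-1}P)$ to $(\rad\Lambda)\cdot P$. For the numerical conditions, I would match Theorem \ref{G-pro:c2} with Definition \ref{G-def:configuration} via Theorem \ref{G-thm:handhom}: the former computes $\len_{\End_\Lambda(\rad P)}\uHom_\Lambda(\rad P,\rad Q)$ in four cases distinguished by the relations $P \cong Q$ and $P \cong \nu^{-1}Q$, the latter identifies this length with $h(\rad P, \rad Q)$, and hypothesis (b) converts the conditions on projectives into the corresponding conditions on their radicals, so the four cases correspond exactly to $d=c=\omega(c)$, $d=c\neq\omega(c)$, $d=\omega(c)\neq c$, and the remaining case.

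Finally, I would reconstruct the full Auslander--Reiten quiver $\mathfrak{A}(\Lambda)$ as $(\ZZ\Delta/G)_{C_\Lambda}$ according to Definition \ref{G-def:recover}: for each $P \in \ind(\proj\Lambda)$ we graft the vertex $P=p_{\rad P}$ onto $\rad P$ via the minimal right almost split map $\rad P \to P$ of Remark \ref{G-rem:almostsplit}(2), and the outgoing arrow $P \to \tau^{-1}(\rad P)$ comes from the minimal left almost split map out of $P$, whose target is identified with $\tau^{-1}(\rad P)$ through the correspondence $P \leftrightarrow \nu P$ from Proposition \ref{G-lem:onetoone}. The main obstacle I anticipate is the invariance step $\omega(C_\Lambda) = C_\Lambda$, since one must carefully ensure that the combinatorial automorphism $\omega$ (defined via the numerics of Definition \ref{G-def:hmor}) is compatible with the radical construction in $\CM\Lambda$; once this is done, the numerical verification of (C2) is an almost mechanical comparison, and the reconstruction of $\mathfrak{A}(\Lambda)$ from $(\underline{\mathfrak{A}}(\Lambda),C_\Lambda)$ follows from Remark \ref{G-rem:almostsplit}.
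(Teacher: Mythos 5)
Your proposal is correct and follows essentially the same route as the paper's own proof: decompose $\underline{\mathfrak{A}}(\Lambda)$ via Theorem \ref{G-th:structure}, verify (C1) for $C_\Lambda$ by combining Theorem \ref{G-pro:c1} with Theorem \ref{G-thm:handhom}, verify (C2) by matching Theorem \ref{G-pro:c2} against Definition \ref{G-def:configuration} using $\omega=\nu^{-1}$, and rebuild $\mathfrak{A}(\Lambda)$ from Remark \ref{G-rem:almostsplit}. You in fact spell out the step $\nu^{-1}(C_\Lambda)=C_\Lambda$ (via $\nu^{-1}(\rad P)\cong\rad(\nu^{-1}P)$, which also follows from $\nu$ being an equivalence preserving minimal right almost split maps) that the paper leaves implicit.
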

\begin{proof}
By Theorem \ref{G-th:structure}, the Auslander-Reiten quiver $\underline{\mathfrak{A}}(\Lambda)$ of $\uCM\Lambda$ is isomorphic to the stable translation quiver $\ZZ \Delta/G$ for some Dynkin diagram $\Delta$ and some weakly admissible group $G \subset \Aut(\ZZ\Delta)$. 

Consider the set $$ C_\Lambda:=\{ \rad P \mid P \text{ is indecomposable projective}\}.$$
We will show that $C_\Lambda$ is a configuration.
According to Theorem \ref{G-pro:c1}, for any $X\in \underline{\mathfrak{A}}(\Lambda)$, there exists an indecomposable projective $\Lambda$-module $P$ such that $\uHom_\Lambda(X, \rad P)\neq 0.$ It follows Theorem \ref{G-thm:handhom} that $X\in H_{\underline{\mathfrak{A}}(\Lambda)}(\rad P).$ Hence, $ C_\Lambda$ satisfies the condition (C1) in Definition \ref{G-def:configuration}.
By Theorem \ref{G-pro:c2}, we know that 
$$\len_{\End_\Lambda(\rad P)} \uHom_\Lambda(\rad P, \rad Q)=\begin{cases}
2, &\text{ if } P \cong Q \cong \nu^{-1} Q;\\
1, &\text{ if } P \cong Q \ncong \nu^{-1} Q;\\
1, &\text{ if } P \cong \nu^{-1} Q \ncong Q;\\
0, &\text{ otherwise}.
\end{cases}
$$
Since $\rad P$ and $\rad Q$ are non-projective and indecomposable, according to Theorem \ref{G-thm:handhom}, it means that 
$$h_{\underline{\mathfrak{A}}(\Lambda)}(\rad P,  \rad Q)=
\begin{cases}
2, &\text{ if }  \rad P \cong \rad Q \cong \omega(\rad Q);\\
1, &\text{ if } \rad P \cong \rad Q \ncong \omega(\rad Q);\\
1, &\text{ if }  \rad P \cong \omega(\rad Q) \ncong \rad Q;\\
0,  &\text{ otherwise}.
\end{cases}
$$
Since $ \omega( C_\Lambda) = \nu^{-1}(C_\Lambda)=C_\Lambda$, it follows that (C2) holds. Therefore, $ C_\Lambda$ is a configuration of $\ZZ \Delta/G$ by Definiton \ref{G-def:configuration}.

By Remark \ref{G-rem:almostsplit} (2), $\rad P \to P$ is the only valued arrow ending at $P$ and $P \to \corad P$ is the only valued arrow starting from $P$. Since $\rad P$ is indecomposable non-projective, we have $\tau(\corad P) = \rad P$.
Finally, because $\rad P \ncong \rad Q$ when $P \ncong Q \in \ind(\proj \Lambda)$, the Auslander-Reiten quiver $\mathfrak{A}(\Lambda)$ is isomorphic to $(\ZZ\Delta/G)_{C_\Lambda}$.
\end{proof}

The remaining part of the section is devoted to study connections between combinatorial configurations of $\ZZ\Delta$ and combinatorial configurations of $\ZZ\Delta/G$. 
By using Lemma \ref{G-lem:hequation}, we show in the following proposition that configurations of $\ZZ\Delta/G$ are essentially the same as configurations of $ \ZZ\Delta$.
\begin{proposition}
\label{G-pro:covering}
Let $\Delta$ be a Dynkin diagram and $G$ a weakly admissible automorphism group of $\ZZ\Delta$.
The canonical map $\pi: (\ZZ\Delta)_0 \to (\ZZ\Delta)_0/G$ gives the following one-to-one correspondence 
$$\{\text{configurations $C$ in }\ZZ\Delta \text{ satisfying } GC=C\}  \xlongleftrightarrow{1-1} \{\text{configurations in }\ZZ\Delta/G \}.$$
\end{proposition}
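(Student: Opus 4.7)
The plan is to verify that the two mutually inverse operations $C \mapsto \pi(C)$ and $C' \mapsto \pi^{-1}(C')$ between $G$-invariant subsets of $(\ZZ\Delta)_0$ and arbitrary subsets of $(\ZZ\Delta/G)_0$ restrict to a bijection between the respective sets of configurations. Bijectivity at the level of subsets is immediate ($\pi^{-1}\pi(C) = C$ for $G$-invariant $C$, and $\pi\pi^{-1}(C') = C'$), so the task reduces to showing that (C1) and (C2) transfer in both directions along $\pi$.

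Two general facts will be used throughout. First, since $G$ acts by automorphisms of the valued translation quiver, $h_{\ZZ\Delta}$ is $G$-equivariant in both arguments. Second, $\omega$ on $\ZZ\Delta/G$ is defined so that $\pi \circ \omega = \omega \circ \pi$, whence $\omega C = C$ is equivalent to $\omega(\pi C) = \pi C$ for $G$-invariant $C$. Condition (C1) then transfers directly via Lemma~\ref{G-lem:hequation}: the sum $h_{\ZZ\Delta/G}(\pi y, \pi c) = \sum_{y' \in Gy} h_{\ZZ\Delta}(y', c)$ of non-negative integers is positive iff some $y' = gy$ has $h_{\ZZ\Delta}(gy, c) > 0$, iff (by $G$-equivariance) $h_{\ZZ\Delta}(y, g^{-1}c) > 0$, with $g^{-1}c \in C$ when $C$ is $G$-invariant.

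For (C2), the forward direction uses Remark~\ref{G-rem:cyclic}: assuming (C2) for $C$, the only $d' \in C$ with $h_{\ZZ\Delta}(d', c) > 0$ are $c$ and $\omega c$, necessarily distinct (since $h_{\ZZ\Delta}(c,c) = 1$ by definition of $h_0$, while (C2) with $d = c = \omega c$ would demand the value $2$), each contributing exactly $1$. As $Gd \subseteq C$ by $G$-invariance, Lemma~\ref{G-lem:hequation} yields $h_{\ZZ\Delta/G}(\pi d, \pi c) = |Gd \cap \{c, \omega c\}|$, which takes values $0, 1, 1, 2$ in precise correspondence with the four cases of (C2) for $\pi(C)$, via the translations $\pi d = \pi c \Leftrightarrow d \in Gc$ and $\pi d = \omega(\pi c) \Leftrightarrow d \in G\omega c$. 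For the converse, assume (C2) for $C' = \pi(C)$ and suppose for contradiction that some $d \in C \smallsetminus \{c, \omega c\}$ has $h_{\ZZ\Delta}(d, c) > 0$: the sum $\sum_{d' \in Gd} h_{\ZZ\Delta}(d', c)$ already contains unit contributions from each of $c$ and $\omega c$ lying in $Gd$, whose total exactly saturates the value $|Gd \cap \{c, \omega c\}|$ prescribed by (C2) for $C'$, so the additional positive term from $d$ itself pushes the sum strictly above the allowed $0$, $1$, or $2$, a contradiction. This forces $C \cap H_{\ZZ\Delta}(c) = \{c, \omega c\}$, and Remark~\ref{G-rem:cyclic} recovers (C2) for $C$.

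The main subtlety is the bookkeeping in the converse direction when $\omega c \in Gc$ (so $\pi c = \omega(\pi c)$ in the quotient): one must confirm that both $c$ and $\omega c$ then lie in $Gd = Gc$ and that their two unit contributions account for the full value $2$ of $h_{\ZZ\Delta/G}(\pi d, \pi c)$, leaving no room for a further positive term. The remaining verifications are direct case-checks on the orbit intersections.
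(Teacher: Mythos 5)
Your proposal is correct and follows essentially the same route as the paper: reduce to showing that $C'$ is a configuration of $\ZZ\Delta/G$ if and only if $\pi^{-1}(C')$ is one of $\ZZ\Delta$, transfer (C1) and the $\omega$-invariance directly through Lemma~\ref{G-lem:hequation} and $\pi\circ\omega=\omega\circ\pi$, and handle (C2) by the same saturation count, namely that the unit contributions of $c$ and $\omega c$ (which are distinct vertices of $\ZZ\Delta$) already exhaust the value prescribed on the quotient, leaving no room for a further positive term. The only cosmetic difference is that you justify $c\neq\omega c$ via the value $h_{\ZZ\Delta}(c,c)=1$ rather than citing it as a general fact about $\ZZ\Delta$ as the paper does; both are fine.
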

\begin{proof}
Let us show that a set $ C$ of vertices of $ \ZZ\Delta/G$ is a configuration of $\ZZ\Delta/G$ if and only if $\pi^{-1}( C)$ is a configuration of $ \ZZ\Delta$.
According to Lemma \ref{G-lem:hequation}, for any $x,y \in (\ZZ\Delta)_0$, we have $$h_{\ZZ\Delta/G}(\pi x, \pi y)=\sum_{x' \in Gx}h_{\ZZ\Delta}(x',y).$$
Hence, we have $$ \bigcup_{x' \in Gx} H_{\ZZ\Delta}(x')= \pi^{-1}(H_{\ZZ\Delta/G}(\pi x))$$ for any $x \in (\ZZ\Delta)_0$.
Thus, $C$ satisfies (C1) if and only $\pi^{-1}(C)$ satisfies (C1).

Let $c, d \in \pi^{-1}( C)$. 
If 
\begin{equation}
\label{G-eq:quotienteq}
h_{\ZZ\Delta/G}(\pi d, \pi c)=
\begin{cases}
2, &\text{ if } \pi d =\pi c=\omega(\pi c);\\
1, &\text{ if } \pi d =\pi c\neq\omega(\pi c);\\
1, &\text{ if } \pi d =\omega(\pi c)\neq \pi c;\\
0, &\text{ otherwise},
\end{cases}
\end{equation}
holds, then since $h_{\ZZ\Delta}(\omega c,c)\ge h_{\ZZ\Delta, m-1}(\omega c,c)=1$ by Proposition \ref{G-lem:h0}, $h_{\ZZ\Delta}(c,c)\ge h_{\ZZ\Delta,0}(c,c)=1$ and $\nu(c) \neq c$ hold for any $c\in (\ZZ\Delta)_0$, it follows that 
\begin{equation}
\label{G-eq:originaleq}
h_{\ZZ\Delta}(d, c)=
\begin{cases}
1, &\text{ if }  d = c\neq\omega(c);\\
1, &\text{ if }  d =\omega( c)\neq c;\\
0, &\text{ otherwise},
\end{cases}
\end{equation}
holds.
On the other hand, if \eqref{G-eq:originaleq} holds, then \eqref{G-eq:quotienteq} holds by Lemma \ref{G-lem:hequation} and the definition of weakly admissible automorphism groups in Subsection \ref{G-def:translation}.
We also have that $\omega (C) =C$ if and only if $\omega(\pi^{-1}(C))= \pi^{-1}(\omega(C))=\pi^{-1}(C)$.
Hence, $C$ satisfies (C2) if and only $\pi^{-1}( C)$ satisfies (C2).
Therefore, $ C$ is a configuration of $\ZZ\Delta/G$ if and only if $\pi^{-1}( C)$ is a configuration of $\ZZ\Delta$.
\end{proof}

\section{Type $A$}
\label{G-s:typeA}
In this section, we will describe all the configurations of Dynkin type $A$. 
In his paper \cite{Wiedemannorder}, Wiedemann described configurations in terms of Brauer relations with \emph{Stra{\ss}eneigenschaft} \cite[Satz, p. 47]{Wiedemannorder}.
In this section, we simplify his description by introducing a much simpler notion of \emph{$2$-Brauer relations} and give nice bijections with Wiedemann's configurations. 
We start by introducing $2$-Brauer relations.
\begin{definition}
\label{G-def:Brauer}
We consider a disk with $n$ marked points $1,2, \ldots, n$ at the boundary in the clockwise order.

(1) \cite[2.6 Definition]{riedtmannA} A \emph{Brauer relation $B$ of rank $n$} is an equivalence relation on the set $ \{1,2, \ldots, n \}$ such that the convex hulls of distinct equivalence classes are disjoint.

(2) A \emph{$2$-Brauer relation $B$ of rank $n$} is a Brauer relation of rank $n$ which only allows at most two elements in any equivalence class.
\end{definition}
For a $2$-Brauer relation $B$, we define a permutation $\sigma_B$ as follows 
\begin{align*}
     \sigma_B(i):= \begin{cases}
        i, \text{ if $i$ itself is an equivalence class;}\\
        j, \text{ if $\{i, j \}$ is an equivalence class.}
                     \end{cases}
\end{align*}
We denote by $\mathbf{B}_n^2$ the set of $2$-Brauer relations of rank $n$ and by $i\sim j$ if $i$ is equivalent to $j$.

Here are some examples of $2$-Brauer relations.
\begin{example}
\label{G-ex:brauerA5}
All the $2$-Brauer relations of rank $4$ are shown as follows.

\begin{center}   
 \begin{tabular}{cccccc}
   \begin{xy}
    		0;<1cm,0cm>:<0cm,-1cm>::,="D",{\xypolygon50{~>.}},
    		{\xypolygon4"A"{~*{\xypolynode}~>{}}}, 
    \end{xy} &\hspace{0.5cm} &
    \begin{xy}
    		0;<1cm,0cm>:<0cm,-1cm>::,="D",{\xypolygon50{~>.}},
    		{\xypolygon4"A"{~*{\xypolynode}~>{}}}, 
    		"A1";"A2"**\dir{-},
    \end{xy}  &
     
         \begin{xy}
    		0;<1cm,0cm>:<0cm,-1cm>::,="D",{\xypolygon50{~>.}},
    		{\xypolygon4"A"{~*{\xypolynode}~>{}}}, 
    		"A3";"A2"**\dir{-},
    \end{xy}  &
         
            \begin{xy}
    		0;<1cm,0cm>:<0cm,-1cm>::,="D",{\xypolygon50{~>.}},
    		{\xypolygon4"A"{~*{\xypolynode}~>{}}}, 
    		"A4";"A3"**\dir{-},
    \end{xy} &
      
            \begin{xy}
    		0;<1cm,0cm>:<0cm,-1cm>::,="D",{\xypolygon50{~>.}},
    		{\xypolygon4"A"{~*{\xypolynode}~>{}}}, 
    		"A4";"A1"**\dir{-},
    \end{xy} 
           \\$B_1$
           & &$ B_2$
            &$B_3$
            & $B_4$
             &$B_5$
                 \end{tabular}
      \end{center}
      
      \begin{center}
      \begin{tabular}{ccccc} 
      \begin{xy}
    		0;<1cm,0cm>:<0cm,-1cm>::,="D",{\xypolygon50{~>.}},
    		{\xypolygon4"A"{~*{\xypolynode}~>{}}}, 
    		"A3";"A1"**\dir{-},
    \end{xy}  &  
    
      \begin{xy}
    		0;<1cm,0cm>:<0cm,-1cm>::,="D",{\xypolygon50{~>.}},
    		{\xypolygon4"A"{~*{\xypolynode}~>{}}}, 
    		"A2";"A4"**\dir{-},
    \end{xy}  & \hspace{0.5cm} &
    
            \begin{xy}
    		0;<1cm,0cm>:<0cm,-1cm>::,="D",{\xypolygon50{~>.}},
    		{\xypolygon4"A"{~*{\xypolynode}~>{}}}, 
    		"A1";"A2"**\dir{-},
                "A3";"A4"**\dir{-},
    \end{xy} & 
    
      \begin{xy}
    		0;<1cm,0cm>:<0cm,-1cm>::,="D",{\xypolygon50{~>.}},
    		{\xypolygon4"A"{~*{\xypolynode}~>{}}}, 
    		"A2";"A3"**\dir{-},
                "A1";"A4"**\dir{-},
    \end{xy} 
         \\$ B_6$
          &$B_7$
          &&$B_8$
          &$B_9$
       \end{tabular}
       \end{center}
%
%
     
In this example, we have $\sigma_{B_6}(1)=3$, $\sigma_{B_6}(2)=2$, $\sigma_{B_6}(3)=1$ and $\sigma_{B_6}(4)=4$. 
\end{example}

\begin{proposition}
\label{pro:cardinalA}
The cardinal number $M(n)$ of $\mathbf{B}_n^2$ is given by
$$M(n)=M(n-1)+ \sum_{i=0}^{n-2}M(i) M(n-2-i)=\sum_{k=0}^{\lfloor n/2 \rfloor} \frac{n!}{(n-2k)!(k+1)! k!}.$$
\end{proposition}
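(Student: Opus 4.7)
The plan is to establish both equalities by independent direct combinatorial arguments, so that neither needs to be deduced from the other.

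For the recurrence, I would partition $\mathbf{B}_n^2$ according to the equivalence class containing the vertex labelled $n$. Either $\{n\}$ is a singleton class, in which case deleting $n$ gives a bijection with $\mathbf{B}_{n-1}^2$ and contributes $M(n-1)$; or $n$ is paired with some $j \in \{1,\ldots,n-1\}$. In the latter case, the chord $\{j,n\}$ separates the remaining marked points into the two arcs $\{1,\ldots,j-1\}$ and $\{j+1,\ldots,n-1\}$, of sizes $j-1$ and $n-j-1$ respectively. By the non-crossing hypothesis in Definition~\ref{G-def:Brauer}(1), no further equivalence class can straddle this chord, so the restrictions to the two arcs are arbitrary $2$-Brauer relations on their respective vertex sets, and they vary independently. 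Summing over $j$ and reindexing by $i=j-1$ yields
\[
M(n) \;=\; M(n-1) + \sum_{j=1}^{n-1} M(j-1)\,M(n-j-1) \;=\; M(n-1) + \sum_{i=0}^{n-2} M(i)\,M(n-2-i).
\]

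For the closed form, I would stratify $\mathbf{B}_n^2$ by the number $k$ of two-element classes (so $0 \le k \le \lfloor n/2\rfloor$). First, choose which $2k$ of the $n$ marked points are to be paired; this contributes $\binom{n}{2k}$. The relation restricted to these $2k$ points is a \emph{perfect} non-crossing matching on $2k$ points in convex position, and classically the number of such matchings is the Catalan number $C_k=\frac{(2k)!}{(k+1)!\,k!}$. One way to see the Catalan count is to split at the chord through the smallest-indexed point and use the recurrence $C_k = \sum_{i=0}^{k-1} C_i C_{k-1-i}$, with base case $C_0=1$. Multiplying and summing then gives
\[
M(n) \;=\; \sum_{k=0}^{\lfloor n/2\rfloor} \binom{n}{2k} C_k \;=\; \sum_{k=0}^{\lfloor n/2\rfloor} \frac{n!}{(n-2k)!\,(k+1)!\,k!}.
\]

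The only routine verifications are the base cases $M(0)=M(1)=1$ (which agree with both formulas and with the recurrence under the usual conventions that an empty sum is $0$) and the Catalan count for non-crossing perfect matchings, which is standard. I do not expect a genuine obstacle: the essential point is simply that the non-crossing condition forces any chord $\{j,n\}$ to cleanly separate the remaining points into two independent arcs, and likewise forces restrictions to subsets in convex position to remain non-crossing, so that both counts factor as honest products.
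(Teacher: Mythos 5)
Your proof is correct. The recurrence argument is essentially the paper's own: the paper conditions on the equivalence class of the vertex $1$ (singleton, or $1\sim i+2$ splitting the disk into arcs of sizes $i$ and $n-2-i$), while you condition on the vertex $n$; this is the same decomposition up to relabelling, and your observation that the non-crossing condition makes the two arcs independent is exactly the point the paper uses. Where you genuinely add something is the closed form: the paper simply asserts that $M(n)$ is the Motzkin number and cites the literature for the identity $M(n)=\sum_k \frac{n!}{(n-2k)!(k+1)!k!}$, whereas you derive it directly by stratifying $\mathbf{B}_n^2$ by the number $k$ of two-element classes and counting $\binom{n}{2k}C_k$ with $C_k=\frac{(2k)!}{(k+1)!\,k!}$ the Catalan number of non-crossing perfect matchings. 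This makes the proposition self-contained at the cost of one standard Catalan count; the only point worth noting explicitly is that restricting a $2$-Brauer relation to the $2k$ paired points, which remain in convex position on the circle, preserves the non-crossing property and that singleton classes impose no constraint (a boundary point never lies on a chord between two other boundary points), so the stratification is indeed a bijection.
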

\begin{proof}
For any $2$-Brauer relation $B$ of rank $n$, if $\sigma_B(1)=1$, then the number of this kind of $2$-Brauer relations is $M(n-1)$.
If $1 \sim i+2$ ($0\le i \le n-2$), we divide the corresponding disk with $n$ marked points into two parts by drawing a diagonal connecting $1$ and $i+2$, then one part has $i$ marked points and the other part has $n-2-i$ marked points. The number of such kind of $2$-Brauer relations is $M(i)M(n-2-i)$. We get the recursive formula.

Moreover, $M(n)$ are known as Motzkin numbers and given by the following formula (see \cite{Motzkin1, Motzkin}):
\begin{align*}M(n)=\sum_{k=0}^{\lfloor n/2 \rfloor} \frac{n!}{(n-2k)!(k+1)! k!}. &\qedhere
\end{align*}
\end{proof}

In the rest, we consider the translation quiver $\ZZ A_{n+1}$ and associate to each vertex the label in $\ZZ/n\ZZ \times \ZZ/n\ZZ$ in Figure \ref{G-fig:labelA}. 
\begin{figure}
\begin{center}
\tiny{
\begin{tikzcd}[column sep=tiny, row sep=small]
&&n\!-\!1\ n\!-\!1 \drar & &n\ n\drar&&1\ 1 \drar & &2\ 2\drar[dotted,-] &&** \drar[dotted,-] & &n\!-\!1\ n\!-\!1\arrow[dotted,-]{rr}&&\phantom{X}\\
\arrow[dotted,-]{r}&n\!-\!1\ *\urar[dotted,-]\drar[dotted,-] & &n\ n\!-\!1\urar\drar&&1\ n\urar\drar& &2\ 1\urar\drar[dotted,-]& &**\urar[dotted,-]\drar[dotted,-]& &n\!-\!1\ n\!-\!2\urar\drar[dotted,-]\\
&&n\ 3\urar[dotted,-]\drar&&1\ n\!-\!1\urar\drar&&2\ n\urar\drar[dotted,-]&&**\urar[dotted,-]\drar[dotted,-]&&n\!-\!1\ *\urar[dotted,-]\drar[dotted,-]&&n\ *\arrow[dotted,-]{rr}&&\phantom{X}\\ 
\arrow[dotted,-]{r}&n\ 2\urar\drar & &1\ 3\urar[dotted,-]\drar &&2\ n\!-\!1\urar\drar[dotted,-]& &**\urar[dotted,-]\drar[dotted,-]& &n\!-\!1\ 1\urar[dotted,-]\drar& &n\ *\urar[dotted,-]\drar[dotted,-]\\
&&1\ 2\urar\drar&&2\ 3\urar[dotted,-]\drar[dotted,-]&&**\urar[dotted,-]\drar[dotted,-]&&n\!-\!1\ n\urar\drar&&n\ 1\urar[dotted,-]\drar&&1\ *\arrow[dotted,-]{rr}&&\phantom{X}\\ 
\arrow[dotted,-]{r}&1\ 1\urar & &2\ 2\urar &&**\urar[dotted,-]& &n\!-\!1\ n\!-\!1\urar& &n\ n\urar& &1\ 1\urar[dotted,-]\\
\end{tikzcd}
}
\end{center}
\caption{Labels of vertices of $\ZZ A_{n+1}$}
\label{G-fig:labelA}
\end{figure}
Notice that we have $[i\ j] = \omega([j\ i])$ as shown in Figure \ref{G-serrebox}. 
By Definition \ref{G-def:configuration}, a vertex $c$ is in a configuration $C$ if and only if $\omega(c)$ is in $C$. Therefore, the configuration contains all the vertices with the same label as $c$.
As a consequence, we can describe a configuration by these labels.
By calculation, the set $H([j\ i])$ defined in \ref{G-def:hmor} consists of all the vertices covered by the rectangle with the four vertices $[j\ i],[j\ j],[i\ j]$ and $[i\ i]$ in Figure \ref{G-serrebox}.
\begin{figure}[!h]
\scalebox{1} 
{
\begin{pspicture}(0,-2.408047)(10.82,2.408047)
\psline[linewidth=0.02cm](0.0,1.7096485)(10.8,1.7096485)
\psline[linewidth=0.02cm](0.0,-1.8903515)(10.8,-1.8903515)
\psline[linewidth=0.02cm](5.2,1.7096485)(2.8,-0.69035155)
\psline[linewidth=0.02cm](2.8,-0.69035155)(4.0,-1.8903515)
\psline[linewidth=0.02cm](4.0,-1.8903515)(6.4,0.50964844)
\psline[linewidth=0.02cm](5.2,1.7096485)(6.4,0.50964844)
\psline[linewidth=0.02cm](1.6,-1.8903515)(2.8,-0.69035155)
\psline[linewidth=0.02cm](6.4,0.50964844)(7.6,1.7096485)
\usefont{T1}{ptm}{m}{n}
\rput(5.1614552,2.1146485){$i\ i$}
\usefont{T1}{ptm}{m}{n}
\rput(7.761455,2.1146485){$j\ j$}
\usefont{T1}{ptm}{m}{n}
\rput(1.4614551,-2.1853516){$i\ i$}
\usefont{T1}{ptm}{m}{n}
\rput(3.861455,-2.1853516){$j\ j$}
\usefont{T1}{ptm}{m}{n}
\rput(1.761455,-0.5853516){$i\ j=\mu(j\ i)$}
\usefont{T1}{ptm}{m}{n}
\rput(6.711455,0.41464844){$j\ i$}
\usefont{T1}{ptm}{m}{n}
\rput(4.85214551,0.1253516){$H([j\ i]) $}
\end{pspicture} 
}
   \caption{}
   \label{G-serrebox}
 \end{figure}
 
 By using the observations above, we prove the following theorem.
  \begin{theorem}
 \label{G-thm:correspondenceA}
We denote by $\mathbf{C}(A_{n+1})$ the set of configurations of $\ZZ A_{n+1}$. There is a bijection 
\begin{align*}
\Phi: \mathbf{C}(A_{n+1})  \to \mathbf{B}_n^2, 
 \end{align*}
  where $\Phi(C)$ for $C\in  \mathbf{C}(A_{n+1})$ is the equivalence relation on the set $\{1,2, \ldots,n\}$ generated by $i \sim j$ for each $[i\ j] \in C$, and $\Phi^{-1}(B)$ for $B \in \mathbf{B}_n^2$ is $\{ [1 \ \sigma_B(1)], [2 \ \sigma_B(2)] ,\ldots, [n \ \sigma_B(n)]\}$.
 \end{theorem}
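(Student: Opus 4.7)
The plan is to unpack the geometric content of the labeling in Figure \ref{G-fig:labelA} and the region $H([j\ i])$ in Figure \ref{G-serrebox}, and then read off the bijection directly from the combinatorial conditions (C1) and (C2).

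First, I would establish a clean description of $H([j\ i])$: by iterating the recursion of Definition \ref{G-def:HQbox} through $\ZZ A_{n+1}$ and invoking Lemma \ref{G-rem:handhq}, one confirms the parallelogram shape of Figure \ref{G-serrebox} and, more importantly, that $[k\ l] \in H([j\ i])$ if and only if both $k$ and $l$ lie on the clockwise arc from $i$ to $j$ on the boundary of the disk with $n$ marked points. In particular, for distinct $i,j$ we have $H([j\ i]) \cap H([i\ j]) = \{[i\ j], [j\ i]\}$, and $H([j\ i]) \cup H([i\ j])$ exhausts the vertex set.

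Second, I would verify that $\Phi$ is well-defined. For $C \in \mathbf{C}(A_{n+1})$, condition (C2) of Definition \ref{G-def:configuration} forces $\omega(C)=C$, so $[i\ j]\in C \Leftrightarrow [j\ i]\in C$; this makes $\Phi(C)$ an equivalence relation with classes of size at most two, with $\{i\}$ a singleton iff $[i\ i]\in C$ and $\{i,j\}$ a pair iff $[i\ j]\in C$ (for $i\ne j$). If distinct $[i\ j],[k\ l]\in C$ had chords that cross or share an endpoint, then geometrically $\{k,l\}$ would lie inside an arc cut by $\{i,j\}$, i.e., $[k\ l]\in H([j\ i])$ or $[k\ l]\in H([i\ j])$, contradicting Remark \ref{G-rem:cyclic}. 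Finally, (C1) applied to the vertex $[i\ i]$ produces $c=[a\ b]\in C$ with $[i\ i]\in H(c)$; by the geometric description $i\in\{a,b\}$, so every $i$ lies in some class and $\Phi(C)\in\mathbf{B}_n^2$.

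Third, I would check that $\Psi(B):=\{[i\ \sigma_B(i)]:1\le i\le n\}$ is a configuration. It is $\omega$-invariant because $\sigma_B$ is an involution. For (C2), via Remark \ref{G-rem:cyclic}, if some $[j\ \sigma_B(j)]$ different from $[i\ \sigma_B(i)]$ and $[\sigma_B(i)\ i]$ were in $H([i\ \sigma_B(i)])$, then $\{j,\sigma_B(j)\}$ would lie inside an arc of $\{i,\sigma_B(i)\}$, violating the non-crossing and class-disjointness properties of $B$. For (C1), given any vertex $[k\ l]$, descending through the nested structure of $B$ produces the innermost chord (or singleton) enclosing both $k$ and $l$, giving an element $[i\ \sigma_B(i)]\in\Psi(B)$ with $[k\ l]\in H([i\ \sigma_B(i)])$.

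The identities $\Phi\circ\Psi=\mathrm{id}$ and $\Psi\circ\Phi=\mathrm{id}$ then follow by direct inspection: $\Phi$ reads off unordered pairs from the labels, $\Psi$ restores the labels using $\sigma_B$, and $\omega$-invariance of configurations makes these mutually inverse. The main technical obstacle, in my view, is the first step: rigorously pinning down $H([j\ i])$ as an arc-shaped region requires carefully running the recursion for $\theta_n$ across $\ZZ A_{n+1}$ and promoting Figure \ref{G-serrebox} to a proposition by induction. Once that description is in hand, the rest of the proof is a straightforward dictionary between labels in the quiver and pairs in the Brauer relation.
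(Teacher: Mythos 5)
Your overall strategy mirrors the paper's (pin down $H([j\ i])$ explicitly, then translate (C1) and (C2) into statements about chords in the disk), but the key geometric description you rely on --- and which you yourself flag as the main technical point --- is stated incorrectly. From Figure \ref{G-serrebox} and the labelling of Figure \ref{G-fig:labelA}, the parallelogram $H([j\ i])$ with corners $[j\ i]$, $[j\ j]$, $[i\ j]$, $[i\ i]$ consists of the labels $[k\ l]$ with $k$ on the clockwise arc from $i$ to $j$ and $l$ on the \emph{complementary} closed arc from $j$ to $i$; it is not symmetric in $k$ and $l$. Concretely, for $n=4$ one has $[2\ 4]\in H([3\ 1])$ but $[4\ 2]\notin H([3\ 1])$, and $[2\ 2]\notin H([3\ 1])\cup H([1\ 3])$ --- which is exactly why the configuration corresponding to $B_6$ in Example \ref{G-ex:brauerA5} (containing $[1\ 3]$, $[3\ 1]$, $[2\ 2]$, $[4\ 4]$) satisfies (C2). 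Your characterization (``both $k$ and $l$ lie on the clockwise arc from $i$ to $j$'') would put $[2\ 2]$ in $H([3\ 1])$ and rule out that configuration, and your claim that $H([j\ i])\cup H([i\ j])$ exhausts the vertex set is false: it misses every $[k\ l]$ with $k,l$ strictly inside the same arc cut off by the chord $\{i,j\}$.

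This inversion propagates into the later steps. The correct statement is that $[k\ l]\in H([j\ i])\cup H([i\ j])$ if and only if the convex hulls of $\{k,l\}$ and $\{i,j\}$ meet (cross or share an endpoint). So in your well-definedness argument for $\Phi$, meeting chords do land in $H$, but not because ``$\{k,l\}$ lies inside an arc cut by $\{i,j\}$'' --- lying inside an arc is precisely the condition for \emph{not} being in $H([j\ i])\cup H([i\ j])$. Likewise, in the (C2) check for $\Psi(B)$, a chord $\{j,\sigma_B(j)\}$ nested inside an arc of $\{i,\sigma_B(i)\}$ causes no violation; only meeting chords would. Most seriously, your (C1) argument for $\Psi(B)$ via ``the innermost chord enclosing both $k$ and $l$'' produces a chord with $k$ and $l$ on the same arc, hence a vertex that is \emph{not} in the corresponding $H$; the argument that works (and is the one the paper uses) is simply $[r\ s]\in H([r\ \sigma_B(r)])\cup H([\sigma_B(r)\ r])$, since $r$ is an endpoint of its own chord and therefore lies on both closed arcs, while $s$ lies on one of them. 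With the corrected description of $H$ all of your steps can be repaired along these lines, but as written the proof does not go through.
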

 
The following is an example.
 \begin{example}
The correspondence $\mathbf{B}_4^2 \xlongleftrightarrow{1-1} \mathbf{C}(A_5)  $ is the following, where the $2$-Brauer relations $B_i$ for $1\le i \le 9$ are listed in Example \ref{G-ex:brauerA5}.
\begin{center}
$B_1$ \hspace{0.5cm} \begin{tikzcd}[column sep=tiny, row sep=tiny]
&&\node[draw,circle,inner sep=1pt]{33}; \drar & &\node[draw,circle,inner sep=1pt]{44};\drar&&\node[draw,circle,inner sep=1pt]{11}; \drar & &\node[draw,circle,inner sep=1pt]{22};\drar &&\node[draw,circle,inner sep=1pt]{33}; \drar & &\node[draw,circle,inner sep=1pt]{44};\arrow[dotted,-]{rr}&&\phantom{X}\\
\arrow[dotted,-]{r}&32\urar\drar & &43\urar\drar&&14\urar\drar& &21\urar\drar& &32\urar\drar& &43\urar\drar\\
&&42\urar\drar&&13\urar\drar&&24\urar\drar&&31\urar\drar&&42\urar\drar&&13\arrow[dotted,-]{rr}&& \phantom{X}\\ 
\arrow[dotted,-]{r}&41\urar\drar & &12\urar\drar &&23\urar\drar& &34\urar\drar& &41\urar\drar& &12\urar\drar\\
&&\node[draw,circle,inner sep=1pt]{11}; \urar&&\node[draw,circle,inner sep=1pt]{22};\urar &&\node[draw,circle,inner sep=1pt]{33}; \urar&&\node[draw,circle,inner sep=1pt]{44}; \urar&&\node[draw,circle,inner sep=1pt]{11}; \urar&&\node[draw,circle,inner sep=1pt]{22}; \arrow[dotted,-]{rr}&&\phantom{X}\\
\end{tikzcd}
\end{center}

\begin{center}
$B_2$  \hspace{0.5cm} \begin{tikzcd}[column sep=tiny, row sep=tiny]
&&\node[draw,circle,inner sep=1pt]{33}; \drar & &\node[draw,circle,inner sep=1pt]{44};\drar&&11 \drar & &22\drar &&\node[draw,circle,inner sep=1pt]{33}; \drar & &\node[draw,circle,inner sep=1pt]{44};\arrow[dotted,-]{rr}&&\phantom{X}\\
\arrow[dotted,-]{r}&32\urar\drar & &43\urar\drar&&14\urar\drar& &\node[draw,circle,inner sep=1pt]{21};\urar\drar& &32\urar\drar& &43\urar\drar\\
&&42\urar\drar&&13\urar\drar&&24\urar\drar&&31\urar\drar&&42\urar\drar&&13\arrow[dotted,-]{rr}&& \phantom{X}\\ 
\arrow[dotted,-]{r}&41\urar\drar & &\node[draw,circle,inner sep=1pt]{12};\urar\drar &&23\urar\drar& &34\urar\drar& &41\urar\drar& &\node[draw,circle,inner sep=1pt]{12};\urar\drar\\
&&11 \urar&&22\urar &&\node[draw,circle,inner sep=1pt]{33}; \urar&&\node[draw,circle,inner sep=1pt]{44}; \urar&&11 \urar&&22\arrow[dotted,-]{rr}&&\phantom{X}\\
\end{tikzcd}
\end{center}

\begin{center}
$B_3$  \hspace{0.5cm} \begin{tikzcd}[column sep=tiny, row sep=tiny]
&&33 \drar & &\node[draw,circle,inner sep=1pt]{44};\drar&&\node[draw,circle,inner sep=1pt]{11}; \drar & &22\drar &&33 \drar & &\node[draw,circle,inner sep=1pt]{44};\arrow[dotted,-]{rr}&&\phantom{X}\\
\arrow[dotted,-]{r}&\node[draw,circle,inner sep=1pt]{32};\urar\drar & &43\urar\drar&&14\urar\drar& &21\urar\drar& &\node[draw,circle,inner sep=1pt]{32};\urar\drar& &43\urar\drar\\
&&42\urar\drar&&13\urar\drar&&24\urar\drar&&31\urar\drar&&42\urar\drar&&13\arrow[dotted,-]{rr}&& \phantom{X}\\ 
\arrow[dotted,-]{r}&41\urar\drar & &12\urar\drar &&\node[draw,circle,inner sep=1pt]{23};\urar\drar& &34\urar\drar& &41\urar\drar& &12\urar\drar\\
&&\node[draw,circle,inner sep=1pt]{11}; \urar&&22\urar &&33 \urar&&\node[draw,circle,inner sep=1pt]{44}; \urar&&\node[draw,circle,inner sep=1pt]{11}; \urar&&22\arrow[dotted,-]{rr}&&\phantom{X}\\
\end{tikzcd}
\end{center}

\begin{center}
$B_4$  \hspace{0.5cm} \begin{tikzcd}[column sep=tiny, row sep=tiny]
&&33 \drar & &44\drar&&\node[draw,circle,inner sep=1pt]{11}; \drar & &\node[draw,circle,inner sep=1pt]{22};\drar &&33 \drar & &44\arrow[dotted,-]{rr}&&\phantom{X}\\
\arrow[dotted,-]{r}&32\urar\drar & &\node[draw,circle,inner sep=1pt]{43};\urar\drar&&14\urar\drar& &21\urar\drar& &32\urar\drar& &\node[draw,circle,inner sep=1pt]{43};\urar\drar\\
&&42\urar\drar&&13\urar\drar&&24\urar\drar&&31\urar\drar&&42\urar\drar&&13\arrow[dotted,-]{rr}&& \phantom{X}\\ 
\arrow[dotted,-]{r}&41\urar\drar & &12\urar\drar &&23\urar\drar& &\node[draw,circle,inner sep=1pt]{34};\urar\drar& &41\urar\drar& &12\urar\drar\\
&&\node[draw,circle,inner sep=1pt]{11}; \urar&&\node[draw,circle,inner sep=1pt]{22};\urar &&33 \urar&&44 \urar&&\node[draw,circle,inner sep=1pt]{11}; \urar&&\node[draw,circle,inner sep=1pt]{22};\arrow[dotted,-]{rr}&&\phantom{X}\\
\end{tikzcd}
\end{center}

\begin{center}
$B_5$ \hspace{0.5cm} \begin{tikzcd}[column sep=tiny, row sep=tiny]
&&\node[draw,circle,inner sep=1pt]{33}; \drar & &44\drar&&11 \drar & &\node[draw,circle,inner sep=1pt]{22};\drar &&\node[draw,circle,inner sep=1pt]{33}; \drar & &44\arrow[dotted,-]{rr}&&\phantom{X}\\
\arrow[dotted,-]{r}&32\urar\drar & &43\urar\drar&&\node[draw,circle,inner sep=1pt]{14};\urar\drar& &21\urar\drar& &32\urar\drar& &43\urar\drar\\
&&42\urar\drar&&13\urar\drar&&24\urar\drar&&31\urar\drar&&42\urar\drar&&13\arrow[dotted,-]{rr}&& \phantom{X}\\ 
\arrow[dotted,-]{r}&\node[draw,circle,inner sep=1pt]{41};\urar\drar & &12\urar\drar &&23\urar\drar& &34\urar\drar& &\node[draw,circle,inner sep=1pt]{41};\urar\drar& &12\urar\drar\\
&&11 \urar&&\node[draw,circle,inner sep=1pt]{22};\urar &&\node[draw,circle,inner sep=1pt]{33}; \urar&&44 \urar&&11 \urar&&\node[draw,circle,inner sep=1pt]{22};\arrow[dotted,-]{rr}&&\phantom{X}\\
\end{tikzcd}
\end{center}

\begin{center}
$B_6$ \hspace{0.5cm} \begin{tikzcd}[column sep=tiny, row sep=tiny]
&&33 \drar & &\node[draw,circle,inner sep=1pt]{44};\drar&&11 \drar & &\node[draw,circle,inner sep=1pt]{22};\drar &&33 \drar & &\node[draw,circle,inner sep=1pt]{44};\arrow[dotted,-]{rr}&&\phantom{X}\\
\arrow[dotted,-]{r}&32\urar\drar & &43\urar\drar&&14\urar\drar& &21\urar\drar& &32\urar\drar& &43\urar\drar\\
&&42\urar\drar&&\node[draw,circle,inner sep=1pt]{13};\urar\drar&&24\urar\drar&&\node[draw,circle,inner sep=1pt]{31};\urar\drar&&42\urar\drar&&\node[draw,circle,inner sep=1pt]{13};\arrow[dotted,-]{rr}&& \phantom{X}\\ 
\arrow[dotted,-]{r}&41\urar\drar & &12\urar\drar &&23\urar\drar& &34\urar\drar& &41\urar\drar& &12\urar\drar\\
&&11 \urar&&\node[draw,circle,inner sep=1pt]{22};\urar &&33 \urar&&\node[draw,circle,inner sep=1pt]{44}; \urar&&11 \urar&&\node[draw,circle,inner sep=1pt]{22};\arrow[dotted,-]{rr}&&\phantom{X}\\
\end{tikzcd}
\end{center}

\begin{center}
$B_7$  \hspace{0.5cm} \begin{tikzcd}[column sep=tiny, row sep=tiny]
&&\node[draw,circle,inner sep=1pt]{33}; \drar & &44\drar&&\node[draw,circle,inner sep=1pt]{11}; \drar & &22\drar &&\node[draw,circle,inner sep=1pt]{33}; \drar & &44\arrow[dotted,-]{rr}&&\phantom{X}\\
\arrow[dotted,-]{r}&32\urar\drar & &43\urar\drar&&14\urar\drar& &21\urar\drar& &32\urar\drar& &43\urar\drar\\
&&\node[draw,circle,inner sep=1pt]{42};\urar\drar&&13\urar\drar&&\node[draw,circle,inner sep=1pt]{24};\urar\drar&&31\urar\drar&&\node[draw,circle,inner sep=1pt]{42};\urar\drar&&13\arrow[dotted,-]{rr}&& \phantom{X}\\ 
\arrow[dotted,-]{r}&41\urar\drar & &12\urar\drar &&23\urar\drar& &34\urar\drar& &41\urar\drar& &12\urar\drar\\
&&\node[draw,circle,inner sep=1pt]{11}; \urar&&22\urar &&\node[draw,circle,inner sep=1pt]{33}; \urar&&44 \urar&&\node[draw,circle,inner sep=1pt]{11}; \urar&&22\arrow[dotted,-]{rr}&&\phantom{X}\\
\end{tikzcd}
\end{center}

\begin{center}
$B_8$ \hspace{0.5cm} \begin{tikzcd}[column sep=tiny, row sep=tiny]
&&33 \drar & &44\drar&&11 \drar & &22\drar &&33 \drar & &44\arrow[dotted,-]{rr}&&\phantom{X}\\
\arrow[dotted,-]{r}&32\urar\drar & &\node[draw,circle,inner sep=1pt]{43};\urar\drar&&14\urar\drar& &\node[draw,circle,inner sep=1pt]{21};\urar\drar& &32\urar\drar& &\node[draw,circle,inner sep=1pt]{43};\urar\drar\\
&&42\urar\drar&&13\urar\drar&&24\urar\drar&&31\urar\drar&&42\urar\drar&&13\arrow[dotted,-]{rr}&& \phantom{X}\\ 
\arrow[dotted,-]{r}&41\urar\drar & &\node[draw,circle,inner sep=1pt]{12};\urar\drar &&23\urar\drar& &\node[draw,circle,inner sep=1pt]{34};\urar\drar& &41\urar\drar& &\node[draw,circle,inner sep=1pt]{12};\urar\drar\\
&&11 \urar&&22\urar &&33 \urar&&44 \urar&&11 \urar&&22\arrow[dotted,-]{rr}&&\phantom{X}\\
\end{tikzcd}
\end{center}

\begin{center}
$B_9$  \hspace{0.5cm} \begin{tikzcd}[column sep=tiny, row sep=tiny]
&&33 \drar & &44\drar&&11 \drar & &22\drar &&33 \drar & &44\arrow[dotted,-]{rr}&&\phantom{X}\\
\arrow[dotted,-]{r}&\node[draw,circle,inner sep=1pt]{32};\urar\drar & &43\urar\drar&&\node[draw,circle,inner sep=1pt]{14};\urar\drar& &21\urar\drar& &\node[draw,circle,inner sep=1pt]{32};\urar\drar& &43\urar\drar\\
&&42\urar\drar&&13\urar\drar&&24\urar\drar&&31\urar\drar&&42\urar\drar&&13\arrow[dotted,-]{rr}&& \phantom{X}\\ 
\arrow[dotted,-]{r}&\node[draw,circle,inner sep=1pt]{41};\urar\drar & &12\urar\drar &&\node[draw,circle,inner sep=1pt]{23};\urar\drar& &34\urar\drar& &\node[draw,circle,inner sep=1pt]{41};\urar\drar& &12\urar\drar\\
&&11 \urar&&22\urar &&33 \urar&&44 \urar&&11 \urar&&22\arrow[dotted,-]{rr}&&\phantom{X}\\
\end{tikzcd}
\end{center}
\end{example}
 
 \begin{proof}[Proof of Theorem \ref{G-thm:correspondenceA}]
 First, we show that the map $\Phi: \mathbf{C}(A_{n+1})  \to \mathbf{B}_n^2$ is well-defined, i.e. $\Phi(C) \in \mathbf{B}_n^2$ holds for any $C \in \mathbf{C}(A_{n+1})$.
  
   Assume that there is an equivalence class of $\Phi( C)$ which contains at least three elements.
Then there exist $[i \ j] \in C$ and $[i \ k] \in C$ with pairwise distinct $i,j,k$. 
Thanks to Definition \ref{G-def:configuration} (C2), $$[i\ k] \in (H([i\ j]) \cup H([j\ i])) \cap C = \{[i \ j], [j \ i]\}$$ which is a contradiction. Hence, each equivalence class of $\Phi(C)$ only contains at most two elements.

Now, we prove that the convex hulls of distinct equivalence classes of $\Phi(C)$ are disjoint. 
Assume that the convex hull of $i \sim j$ is joint with the convex hull of $r \sim s$.
Without losing generality, we assume that $1\le i <r<j<s \le n$.
This means $[r\ s]\in H([j\ i])$ (see Figure \ref{G-serrebox}), which contradicts with (C2) again.
Hence, the convex hull of distinct equivalence classes of $\Phi(C)$ are disjoint.
Therefore, the equivalence relation $\Phi(C)$ is a $2$-Brauer relation of rank $n$.

We construct a map $\Psi: \mathbf{B}_n^2 \to  \mathbf{C}(A_{n+1}) $ by $B \mapsto \Psi(B) :=\{ [1 \ \sigma_B(1)], [2 \ \sigma_B(2)] ,\ldots, [n \ \sigma_B(n)]\}$.
Let us prove that the map is well-defined, i.e. the set $\Psi(B)$ is a configuration of $\ZZ A_{n+1}$ for each $B\in\mathbf{B}_n^2$. 
For any $[i\ j] \in \Psi(B),$ we have $[j\ i] \in \Psi(B)$ since $\sigma_B^2=\Id$. 
Assume that $[r \ s] \in \Psi(B)$ belongs to $H([j\ i])$. 
Without losing generality, assume that $1=i \le r \le j \le s \le n$.
This means the convex hull of $i \sim j$ intersects with the convex hull of $r \sim s$, we have $\{r,s\}=\{i,j\}$. Hence, we have $ H([j\ i]) \cap \Psi(B) =\{[i\ j], [j\ i]\}$, so (C2) holds.
For any $[r\ s]\in (\ZZ A_{n+1})_0$, we have $[r\ s]\in H([r\ \sigma_B(r)]) \cup H([\sigma_B(r)\ r]),$ so (C1) holds. We get $\Psi(B) \in \mathbf{C}(A_{n+1}) $.

 For any $C \in \mathbf{C}(A_{n+1})$, it is obvious that $\Phi  \Psi (C) \supset C$. Let us prove that $\Phi  \Psi (C) \subset C$.
Assume that $[i \ j] \in \Phi  \Psi (C)$. Thanks to (C1), there exists $[i \ k] \in C$ such that $[i\ i] \in H([i\ k])$. So we have $[i \ j], [i \ k] \in \Phi  \Psi (C)$, where $ \Phi  \Psi (C)$ is also a configuration. As before it implies that $k=j$. Hence, we have $\Phi  \Psi =\Id$. Clearly, $\Psi \Phi =\Id$ holds.
\end{proof}
 
 \section{Types $B$ and $C$}
 \label{G-s:typeBC}
 In this section, we will describe all the configurations of type $B$.  In order to realize the description easily, we need to introduce symmetric $2$-Brauer relations which is a special class of $2$-Brauer relations given in Definition \ref{G-def:Brauer} for type $A$.
 
 \begin{definition}
\label{G-def:symmetricBrauer}
A \emph{symmetric $2$-Brauer relation $B$ of rank $2n$} is a $2$-Brauer relation of rank $2n$ which is symmetric with respect to rotation by $\pi$.
We denote by $\mathbf{B}_{2n}^{2,s}$ the set of all symmetric $2$-Brauer relations of rank $2n$.
\end{definition}

Let us see the example of symmetric $2$-Brauer relations of rank $4$ as follows.
\begin{example}
\label{G-ex:brauerD4}
All the symmetric $2$-Brauer relations of rank $4$ are shown as follows, where $$\mathbf{B}_4^{2,s}=\{ B_1^s, B_2^s,B_3^s,B_4^s,B_5^s\}.$$
      
      \begin{center}
      \begin{tabular}{ccccccc} 
      \begin{xy}
    		0;<1cm,0cm>:<0cm,-1cm>::,="D",{\xypolygon50{~>.}},
    		{\xypolygon4"A"{~*{\xypolynode}~>{}}}, 
    \end{xy} &\hspace{0.1cm} &
    
      \begin{xy}
    		0;<1cm,0cm>:<0cm,-1cm>::,="D",{\xypolygon50{~>.}},
    		{\xypolygon4"A"{~*{\xypolynode}~>{}}}, 
    		"A3";"A1"**\dir{-},
    \end{xy}  &   
    
      \begin{xy}
    		0;<1cm,0cm>:<0cm,-1cm>::,="D",{\xypolygon50{~>.}},
    		{\xypolygon4"A"{~*{\xypolynode}~>{}}}, 
    		"A2";"A4"**\dir{-},
    \end{xy}  & \hspace{0.1cm}&
    
     \begin{xy}
    		0;<1cm,0cm>:<0cm,-1cm>::,="D",{\xypolygon50{~>.}},
    		{\xypolygon4"A"{~*{\xypolynode}~>{}}}, 
    		"A1";"A2"**\dir{-},
                "A3";"A4"**\dir{-},
    \end{xy} & 
    
      \begin{xy}
    		0;<1cm,0cm>:<0cm,-1cm>::,="D",{\xypolygon50{~>.}},
    		{\xypolygon4"A"{~*{\xypolynode}~>{}}}, 
    		"A2";"A3"**\dir{-},
                "A1";"A4"**\dir{-},
    \end{xy} 
         \\$B_1^s$
         & &$ B_2^s$
          &$B_3^s$
          & &$B_4^s$
          &$B_5^s$
       \end{tabular}
       \end{center}
\end{example}

\begin{proposition}
\label{pro:cardinalBC}
The cardinal number $M^s(n)$ of $\mathbf{B}_{2n}^{2,s}$ is given by the following recursive formula 
$$M^s(n)=M^s(n-1)+M(n-1)+ 2\sum_{i=0}^{n-2}M(i) M^s(n-2-i)=\sum_{k=0}^{\lfloor(n+1)/2\rfloor} \frac{n!(n+1-k)!}{k!(n-k)!k!(n+1-2k)!}.$$ 
\end{proposition}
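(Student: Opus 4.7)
The plan is to mirror the proof of Proposition \ref{pro:cardinalA} via a case analysis on the value of $\sigma_B(1)$ for $B \in \mathbf{B}_{2n}^{2,s}$. Since the $\pi$-rotation acts on the labels as $i \mapsto i + n \pmod{2n}$, the symmetry of $B$ forces $\sigma_B(i+n) \equiv \sigma_B(i) + n \pmod{2n}$; in particular $\sigma_B(1)$ determines $\sigma_B(n+1)$, so I need only track $\sigma_B(1) \in \{1, 2, \dots, 2n\}$.

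I will split the possibilities for $\sigma_B(1)$ into four exhaustive cases. First, if $\sigma_B(1) = 1$ then $n+1$ is also a singleton, and deleting these two points leaves a symmetric $2$-Brauer relation on $2(n-1)$ marked points, contributing $M^s(n-1)$. Second, if $\sigma_B(1) = n+1$, the diagonal cuts the disk into two half-disks on $n-1$ marked points each: the upper half carries an arbitrary $2$-Brauer relation and the lower half is forced by symmetry, contributing $M(n-1)$. Third, if $\sigma_B(1) = k+2$ with $k \in \{0, \dots, n-2\}$, the two symmetric chords $1\text{-}(k+2)$ and $(n+1)\text{-}(n+k+2)$ are non-crossing (immediate from $k+2 \le n < n+1 \le n+k+2$) and partition the disk into three regions: two small regions $\{2, \dots, k+1\}$ and $\{n+2, \dots, n+k+1\}$ interchanged by the rotation, and a middle region on $\{k+3, \dots, n\} \cup \{n+k+3, \dots, 2n\}$ fixed by it. This contributes $M(k) \cdot M^s(n-k-2)$, and summing over $k$ yields $\sum_{k=0}^{n-2} M(k)\, M^s(n-k-2)$. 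Fourth, if $\sigma_B(1) = 2n-k$ with $k \in \{0, \dots, n-2\}$, an entirely analogous analysis using the chords $1\text{-}(2n-k)$ and $(n+1)\text{-}(n-k)$ produces the same count. Adding the four contributions gives the desired recursion.

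The main obstacle will be Case~3 (and, identically, Case~4): I must justify that the marked points of the middle region, relabeled clockwise as $1', 2', \dots, (2(n-k-2))'$, inherit the $\pi$-rotation $j' \mapsto j' + (n-k-2) \pmod{2(n-k-2)}$, so that symmetric $2$-Brauer relations on this region are genuinely enumerated by $M^s(n-k-2)$. This is a direct check from the formula $i \mapsto i + n$ applied to the boundary labels $\{k+3,\dots,n\}\cup\{n+k+3,\dots,2n\}$, but it is what makes the recursion self-referential in the symmetric count. One also needs to verify non-crossing of the two symmetric chords in each case, which reduces to elementary order inequalities on $\{1, \dots, 2n\}$.

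Finally, to pass from the recursion to the closed-form binomial sum, I will proceed by induction on $n$, using the closed form for $M(n)$ established in Proposition \ref{pro:cardinalA}. Equivalently, the two recursions translate into the generating-function identities $M(z) = 1 + z M(z) + z^2 M(z)^2$ and $M^s(z)\bigl(1 - z - 2z^2 M(z)\bigr) = 1 + z M(z)$, from which the binomial closed form is extracted by a routine coefficient-extraction argument.
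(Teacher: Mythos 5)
Your proposal is correct and follows essentially the same route as the paper: a case analysis on $\sigma_B(1)$ (singleton, diameter $1\sim n+1$, or a chord on one side or the other), cutting the disk along the pair of symmetric chords into two rotation-interchanged regions counted by $M$ and a rotation-invariant middle region counted by $M^s$, then obtaining the closed form by induction. The only difference is that you spell out the relabeling check for the middle region and offer a generating-function alternative for the closed form, both of which the paper leaves implicit.
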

\begin{proof}
For any symmetric $2$-Brauer relation $B$ of rank $2n$, if $\sigma_B(1)=1$, then $\sigma_B(n+1)=n+1$. The number of such kind of symmetric $2$-Brauer relations is $M^s(n-1)$.
If $1 \sim n+1$, then the number of these symmetric $2$-Brauer relations is $M(n-1)$.
If $1 \sim i+2$ ($0 \le i \le n-2$), then we divide the corresponding disk with $2n$ marked points into three parts by drawing diagonals $1 \sim i+2$ and $n+1 \sim n+i+2$. The two symmetric parts have $i$ marked points and the third part has $2(n-2-i)$ marked points. Hence, the number of this kind of symmetric $2$-Brauer relations is $M(i)M^s(n-2-i)$. It is similar when $1 \sim n+2+i$ ($0 \le i \le n-2$) holds. We get the recursive formula.

By induction, we can get the other formula. Note that $M^s(n)$ is known as number of directed animals. 
\end{proof}

 In the rest, we consider the labels of the vertices of $\ZZ B_{n+1}$ in $\ZZ/2n\ZZ \times \ZZ/2n\ZZ$ in Figure \ref{G-fig:labelB}.   
 \begin{figure}
 \begin{center}
\tiny{
\begin{tikzcd}[column sep=small]
&&2n\!-\!1\ n\!-\!1 \arrow{dr}[sloped, above]{(1,2)} & &2n\ n\arrow{dr}[sloped, above]{(1,2)}&&1\  n\!+\!1\arrow{dr}[sloped, above]{(1,2)} & &2\ n\!+\!2\arrow[dotted,-]{rr}&&2n\!-\!1\ n\!-\!1\arrow[dotted,-]{rr}&&\phantom{X}\\
\arrow[dotted,-]{r}&2n\!-\!1\ *\urar[dotted,-]\drar[dotted,-] & &2n\ n\!-\!1\arrow{ur}[sloped, above]{(2,1)}\drar&&1\ n\arrow{ur}[sloped, above]{(2,1)}\drar& &2\ n\!+\!1\arrow{ur}[sloped, above]{(2,1)}\arrow[dotted,-]{rr}&&2n\!-\!1\ 2n\!-\!2\arrow{ur}[sloped, above]{(2,1)}\drar[dotted,-]\\
&&2n\ 3\urar[dotted,-]\drar&&1\ n\!-\!1\urar\drar&&2\ n\urar\arrow[dotted,-]{rr}&&2n\!-\!1\ *\urar[dotted,-]\drar[dotted,-]&&2n\ *\arrow[dotted,-]{rr}&&\phantom{X}\\ 
\arrow[dotted,-]{r}&2n\ 2\urar\drar & &1\ 3\urar[dotted,-]\drar &&2\ n\!-\!1\urar\arrow[dotted,-]{rr}&&2n\!-\!1\ 1\urar[dotted,-]\drar& &2n\ *\urar[dotted,-]\drar[dotted,-]\\
&&1\ 2\urar\drar&&2\ 3\urar[dotted,-]\arrow[dotted,-]{rr}&&2n\!-\!1\ 2n\urar\drar&&2n\ 1\urar[dotted,-]\drar&&1\ *\arrow[dotted,-]{rr}&&\phantom{X}\\ 
\arrow[dotted,-]{r}&1\ 1\urar & &2\ 2\urar\arrow[dotted,-]{rr}&&2n\!-\!1\ 2n\!-\!1\urar& &2n\ 2n\urar& &1\ 1\urar[dotted,-]\\
\end{tikzcd}
}
\end{center}
\caption{Labels of vertices of $\ZZ B_{n+1}$}
\label{G-fig:labelB}
\end{figure}
Remark that we only use labels $[r \ s]$ such that $0 \le s-r (\mod 2n) \le n $.
By calculation, we have 
\begin{equation}
\label{G-eq:nuB}
\omega([j+n\ \ i+n])=[j\ i].
\end{equation}
Moreover, 
 \begin{align}
 \label{G-eq:typeB}
 H([j+n\ \ i+n])=\, &  \{[s\ t] \mid \forall s \in \{j, j+1, \ldots, i \} , \forall t \in \{i, i+1, \ldots, i+n\ \}, t-s(\mod 2n) \le n \}  \cup \notag \\
   &  \{[s\ t] \mid \forall s \in \{i+1, i+2,\ldots, j+n \} , \forall t \in \{j+n, j+n+1, \ldots, i+n\} \} 
 \end{align}
 is the set consisting of all the vertices in Figure \ref{G-HboxB}.
   \begin{figure}[!h]
\scalebox{1} 
{
\begin{pspicture}(0,-2.0080469)(13.042911,2.0080469)
\psline[linewidth=0.024cm](5.821016,0.14964844)(7.5010157,-1.5303515)
\psline[linewidth=0.024cm](1.2610157,-1.5303515)(11.581016,-1.5303515)
\psline[linewidth=0.024cm](7.2610154,1.5896485)(8.941015,-0.09035156)
\psline[linewidth=0.024cm](1.2610157,1.5896485)(11.581016,1.5896485)
\psline[linewidth=0.024cm](2.7010157,-0.09035156)(4.381016,1.5896485)
\psline[linewidth=0.024cm](4.1410155,-1.5303515)(5.821016,0.14964844)
\psline[linewidth=0.024cm](7.5010157,-1.5303515)(8.941015,-0.09035156)
\psline[linewidth=0.024cm](4.1410155,-1.5303515)(2.7010157,-0.09035156)
\usefont{T1}{ptm}{m}{n}
\rput(4.0824708,-1.8553515){$i\  i$}
\usefont{T1}{ptm}{m}{n}
\rput(7.5424706,-1.8553515){$j\!+\!n\ \  j\!+\!n$}
\usefont{T1}{ptm}{m}{n}
\rput(2.3824708,0.10535156){$j\ i$}
\usefont{T1}{ptm}{m}{n}
\rput(10.002471,0.10535156){$j\!+\!n\ \  i\!+\!n$}
\usefont{T1}{ptm}{m}{n}
\rput(7.342471,1.8546484){$i\ \ i\!+\!n$}
\usefont{T1}{ptm}{m}{n}
\rput(4.5824705,1.8546484){$j\ \  j\!+\!n$}
\usefont{T1}{ptm}{m}{n}
\rput(5.8824705,0.38146484){$i\ \  j\!+\!n$}
\psline[linewidth=0.034cm,linestyle=dotted,dotsep=0.16cm](4.381016,1.5896485)(5.821016,0.14964844)
\psline[linewidth=0.034cm,linestyle=dotted,dotsep=0.16cm](7.2610154,1.5896485)(5.821016,0.14964844)
\end{pspicture} 
}
   \caption{}
   \label{G-HboxB}
 \end{figure}
 \newline

By using the above analysis, we can get the following theorem.
\begin{theorem}
 \label{G-thm:correspondenceB}
We denote by $\mathbf{C}(B_{n+1})$ the set of all the configurations of $\ZZ B_{n+1}$.
There is a bijection  
 \begin{align*} 
 \Phi: \mathbf{C}(B_{n+1})  \to  \mathbf{B}_{2n}^{2,s},  
  \end{align*}
    where $\Phi(C)$ for $C \in \mathbf{C}(B_{n+1}) $ is the equivalence relation on the set $\{1,2, \ldots, 2n\}$ generated by $i \sim j$ for each $[j\ i] \in C$, and $\Phi^{-1}(B)$ for $B\in \mathbf{B}_{2n}^{2,s}$ is $\{[1 \ \sigma_{B}(1)],\ldots, [2n \ \sigma_{B}(2n)]\}\cap (\ZZ B_{n+1})_0$.
\end{theorem}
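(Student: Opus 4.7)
The plan is to mimic closely the proof of Theorem \ref{G-thm:correspondenceA} for type $A$, exploiting the fact that the vertex-labels of $\ZZ B_{n+1}$ are exactly those pairs $[r\ s]$ with $0\le s-r\ (\mod 2n)\le n$, i.e.\ ``half'' of the labels used for $\ZZ A_{2n+1}$. The relation $\omega([j+n\ \ i+n])=[j\ i]$ from \eqref{G-eq:nuB} is the combinatorial avatar of rotation by $\pi$ on a disk with $2n$ marked points, so the invariance $\omega(C)=C$ in (C2) will translate directly into the symmetry required of a symmetric $2$-Brauer relation.

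First, I would verify that $\Phi$ is well defined. Given $C\in\mathbf{C}(B_{n+1})$, the at-most-two-elements-per-class and disjoint-convex-hull properties are proved exactly as in type $A$: if $[i\ j],[i\ k]\in C$ (or respectively two chords whose convex hulls meet), then the second label lies in $H([j\ i])$ as described in \eqref{G-eq:typeB} and depicted in Figure \ref{G-HboxB}, contradicting (C2). The symmetry $i\sim j\Leftrightarrow i+n\sim j+n$ follows at once from $\omega(C)=C$ together with \eqref{G-eq:nuB}: the label $[j\ i]\in C$ iff $[j+n\ \ i+n]\in C$ (read modulo $2n$, and adjusted so that the second coordinate is within distance $n$ of the first).

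Next, I would construct the inverse. Given $B\in\mathbf{B}_{2n}^{2,s}$, define $\Psi(B):=\{[1\ \sigma_B(1)],\ldots,[2n\ \sigma_B(2n)]\}\cap(\ZZ B_{n+1})_0$; the intersection with $(\ZZ B_{n+1})_0$ picks, for each chord $i\sim j$ of $B$, the unique representative $[r\ s]$ with $0\le s-r\ (\mod 2n)\le n$. The symmetry of $B$ forces $\Psi(B)$ to be $\omega$-stable by \eqref{G-eq:nuB}. To check (C2), I fix $[j\ i]\in\Psi(B)$ and use the explicit description \eqref{G-eq:typeB} of $H([j+n\ \ i+n])$: a label $[r\ s]\in\Psi(B)$ lies in this set precisely when the chord $r\sim s$ of $B$ meets the chord $i\sim j$ or its $\pi$-rotate $i+n\sim j+n$ inside the disk, and the disjoint-convex-hull condition of $B$ then forces $\{r,s\}\in\{\{i,j\},\{i+n,j+n\}\}$, giving $h(-,[j+n\ \ i+n])$ the value prescribed in Definition \ref{G-def:configuration} (noting that $[j\ i]=\omega([j+n\ \ i+n])$ only when $n\mid j-i$, which handles the two coincidence cases). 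Condition (C1) is an easy case-check: for each vertex $[r\ s]\in(\ZZ B_{n+1})_0$, one verifies using \eqref{G-eq:typeB} that $[r\ s]\in H([r\ \sigma_B(r)+n])$ (up to the appropriate modular normalization), so $H(\omega^{-1}[r\ \sigma_B(r)])$ covers $[r\ s]$.

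Finally, mutual inversion is formal: $\Psi\Phi=\mathrm{Id}$ is immediate from the definitions, and $\Phi\Psi(C)\supset C$ is immediate; the reverse containment $\Phi\Psi(C)\subset C$ is extracted, as in type $A$, by using (C1) to find, for any $[i\ j]\in\Phi\Psi(C)$, some $[i\ k]\in C$ with $[i\ i]\in H([i\ k])$, and then appealing to the already established version of (C2) for $\Phi\Psi(C)$ to conclude $k=j$. The main obstacle, and the only place the proof genuinely differs from type $A$, is the bookkeeping imposed by the modular-arithmetic restriction $0\le s-r\ (\mod 2n)\le n$: one must be careful that the shape of $H([j+n\ \ i+n])$ drawn in Figure \ref{G-HboxB} is read correctly when the chord straddles the ``equator'' of the disk, and that the intersection $\cap(\ZZ B_{n+1})_0$ in the definition of $\Psi$ picks a well-defined representative of each symmetric pair of labels; once this is set up, every other step runs parallel to the type $A$ argument.
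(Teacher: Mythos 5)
Your proposal follows essentially the same route as the paper's proof: well-definedness of $\Phi$ via (C2) together with the explicit description \eqref{G-eq:typeB} of $H([j+n\ \ i+n])$, construction of $\Psi(B)=\{[1\ \sigma_B(1)],\ldots,[2n\ \sigma_B(2n)]\}\cap(\ZZ B_{n+1})_0$ with the same care about which of $[s\ t]$, $[t\ s]$ is a valid label, and mutual inversion exactly as in type $A$. The only blemish is the parenthetical claim that $[j\ i]=\omega([j+n\ \ i+n])$ holds ``only when $n\mid j-i$'': by \eqref{G-eq:nuB} this identity holds unconditionally, and since $\omega(c)\neq c$ for every vertex of $\ZZ B_{n+1}$ the case analysis in (C2) reduces to $C\cap H(c)=\{c,\omega(c)\}$ as in Remark \ref{G-rem:cyclic}, so this slip does not affect the argument.
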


First, let us see the example of configurations of $\ZZ B_3$ and symmetric $2$-Brauer relations of rank $4$.
\begin{example}
 The symmetric $2$-Brauer relations of rank $4$ are listed in Example \ref{G-ex:brauerD4}.
 Then the one-to-one correspondence $\mathbf{C}(B_3) \longleftrightarrow \mathbf{B}_4^{2,s}$ is given by:
 \begin{center}
$B_1^s$  \begin{tikzcd}[column sep=small, row sep=tiny]
\arrow[dotted,-]{r}&42\arrow{dr}[sloped, above]{(1,2)} & &13\arrow{dr}[sloped, above]{(1,2)}&&24\arrow{dr}[sloped, above]{(1,2)}& &31\arrow{dr}[sloped, above]{(1,2)}& &42\arrow{dr}[sloped, above]{(1,2)}& &13\arrow{dr}[sloped, above]{(1,2)}&&24\arrow[dotted,-]{rr}&& \phantom{X}\\
&&12\arrow{ur}[sloped, above]{(2,1)}\drar&&23\arrow{ur}[sloped, above]{(2,1)}\drar&&34\arrow{ur}[sloped, above]{(2,1)}\drar&&41\arrow{ur}[sloped, above]{(2,1)}\drar&&12\arrow{ur}[sloped, above]{(2,1)}\drar&&23\arrow{ur}[sloped, above]{(2,1)}\drar\\
\arrow[dotted,-]{r}&\node[draw,circle,inner sep=1pt]{11};\urar& &\node[draw,circle,inner sep=1pt]{22};\urar &&\node[draw,circle,inner sep=1pt]{33};\urar& &\node[draw,circle,inner sep=1pt]{44};\urar& &\node[draw,circle,inner sep=1pt]{11};\urar& &\node[draw,circle,inner sep=1pt]{22};\urar&&\node[draw,circle,inner sep=1pt]{33};\arrow[dotted,-]{rr}&& \phantom{X}
\end{tikzcd}
\end{center}

 \begin{center}
$B_2^s$ \begin{tikzcd}[column sep=small, row sep=tiny]
\arrow[dotted,-]{r}&42\arrow{dr}[sloped, above]{(1,2)} & &\node[draw,circle,inner sep=1pt]{13};\arrow{dr}[sloped, above]{(1,2)}&&24\arrow{dr}[sloped, above]{(1,2)}& &\node[draw,circle,inner sep=1pt]{31};\arrow{dr}[sloped, above]{(1,2)}& &42\arrow{dr}[sloped, above]{(1,2)}& &\node[draw,circle,inner sep=1pt]{13};\arrow{dr}[sloped, above]{(1,2)}&&24\arrow[dotted,-]{rr}&& \phantom{X}\\
&&12\arrow{ur}[sloped, above]{(2,1)}\drar&&23\arrow{ur}[sloped, above]{(2,1)}\drar&&34\arrow{ur}[sloped, above]{(2,1)}\drar&&41\arrow{ur}[sloped, above]{(2,1)}\drar&&12\arrow{ur}[sloped, above]{(2,1)}\drar&&23\arrow{ur}[sloped, above]{(2,1)}\drar\\ 
\arrow[dotted,-]{r}&11\urar& &\node[draw,circle,inner sep=1pt]{22};\urar &&33\urar& &\node[draw,circle,inner sep=1pt]{44};\urar& &11\urar& &\node[draw,circle,inner sep=1pt]{22};\urar&&33\arrow[dotted,-]{rr}&& \phantom{X}
\end{tikzcd}
\end{center}

 \begin{center}
$B_3^s$ \begin{tikzcd}[column sep=small, row sep=tiny]
\arrow[dotted,-]{r}&\node[draw,circle,inner sep=1pt]{42};\arrow{dr}[sloped, above]{(1,2)} & &13\arrow{dr}[sloped, above]{(1,2)}&&\node[draw,circle,inner sep=1pt]{24};\arrow{dr}[sloped, above]{(1,2)}& &31\arrow{dr}[sloped, above]{(1,2)}& &\node[draw,circle,inner sep=1pt]{42};\arrow{dr}[sloped, above]{(1,2)}& &13\arrow{dr}[sloped, above]{(1,2)}&&\node[draw,circle,inner sep=1pt]{24};\arrow[dotted,-]{rr}&& \phantom{X}\\
&&12\arrow{ur}[sloped, above]{(2,1)}\drar&&23\arrow{ur}[sloped, above]{(2,1)}\drar&&34\arrow{ur}[sloped, above]{(2,1)}\drar&&41\arrow{ur}[sloped, above]{(2,1)}\drar&&12\arrow{ur}[sloped, above]{(2,1)}\drar&&23\arrow{ur}[sloped, above]{(2,1)}\drar\\
\arrow[dotted,-]{r}&\node[draw,circle,inner sep=1pt]{11};\urar& &22\urar &&\node[draw,circle,inner sep=1pt]{33};\urar& &44\urar& &\node[draw,circle,inner sep=1pt]{11};\urar& &22\urar&&\node[draw,circle,inner sep=1pt]{33};\arrow[dotted,-]{rr}&& \phantom{X}
\end{tikzcd}
\end{center}

 \begin{center}
$B_4^s$ \begin{tikzcd}[column sep=small, row sep=tiny]
\arrow[dotted,-]{r}&42\arrow{dr}[sloped, above]{(1,2)} & &13\arrow{dr}[sloped, above]{(1,2)}&&24\arrow{dr}[sloped, above]{(1,2)}& &31\arrow{dr}[sloped, above]{(1,2)}& &42\arrow{dr}[sloped, above]{(1,2)}& &13\arrow{dr}[sloped, above]{(1,2)}&&24\arrow[dotted,-]{rr}&& \phantom{X}\\
&&\node[draw,circle,inner sep=1pt]{12};\arrow{ur}[sloped, above]{(2,1)}\drar&&23\arrow{ur}[sloped, above]{(2,1)}\drar&&\node[draw,circle,inner sep=1pt]{34};\arrow{ur}[sloped, above]{(2,1)}\drar&&41\arrow{ur}[sloped, above]{(2,1)}\drar&&\node[draw,circle,inner sep=1pt]{12};\arrow{ur}[sloped, above]{(2,1)}\drar&&23\arrow{ur}[sloped, above]{(2,1)}\drar\\
\arrow[dotted,-]{r}&11\urar& &22\urar &&33\urar& &44\urar& &11\urar& &22\urar&&33\arrow[dotted,-]{rr}&& \phantom{X}
\end{tikzcd}
\end{center}

 \begin{center}
$B_5^s$ \begin{tikzcd}[column sep=small, row sep=tiny]
\arrow[dotted,-]{r}&42\arrow{dr}[sloped, above]{(1,2)} & &13\arrow{dr}[sloped, above]{(1,2)}&&24\arrow{dr}[sloped, above]{(1,2)}& &31\arrow{dr}[sloped, above]{(1,2)}& &42\arrow{dr}[sloped, above]{(1,2)}& &13\arrow{dr}[sloped, above]{(1,2)}&&24\arrow[dotted,-]{rr}&& \phantom{X}\\
&&12\arrow{ur}[sloped, above]{(2,1)}\drar&&\node[draw,circle,inner sep=1pt]{23};\arrow{ur}[sloped, above]{(2,1)}\drar&&34\arrow{ur}[sloped, above]{(2,1)}\drar&&\node[draw,circle,inner sep=1pt]{41};\arrow{ur}[sloped, above]{(2,1)}\drar&&12\arrow{ur}[sloped, above]{(2,1)}\drar&&\node[draw,circle,inner sep=1pt]{23};\arrow{ur}[sloped, above]{(2,1)}\drar\\
\arrow[dotted,-]{r}&11\urar& &22\urar &&33\urar& &44\urar& &11\urar& &22\urar&&33\arrow[dotted,-]{rr}&& \phantom{X}
\end{tikzcd}
\end{center}
 \end{example}
 
 \begin{proof}[Proof of Theorem \ref{G-thm:correspondenceB}]
 First of all, we will prove that the map $\Phi: \mathbf{C}(B_{n+1}) \to \mathbf{B}_{2n}^{2,s}$ is well-defined, i.e. $\Phi(C)$ is a symmetric $2$-Brauer relation for each $C \in \mathbf{C}(B_{n+1})$.
 For each $[j\ i]\in  C$, according to \eqref{G-eq:nuB}, we know that $\omega ([j\ i])= [j-n\ \ i-n] =[j+n\ \ i+n] \in C.$
So the equivalence relation $\Phi (C)$ is symmetric with respect to the rotation by $\pi$.
   
Assume that there is an equivalence class of $\Phi(C)$ which contains at least three elements.
Then there exist $[j\ i]\in  C$ and $[t\ i] \in C$ (or $[j \ t] \in C$) with pairwise distinct $i,j,t$.
Thanks to (C2) and \eqref{G-eq:typeB}, $$[t\ i] \in (H([j\ i]) \cup H([j+n\ \ i+n])) \cap C=\{[j \ i],[j+n \ \ i+n]\}$$
which is a contradiction.
 Hence, each equivalence class in $\Phi(C)$ contains at most two elements.
 
 Assume that that the convex hull of $j \sim i$ is joint with the convex hull of $r \sim s$. Then by \eqref{G-eq:typeB}, we get $[r \ s] \in (H([j \ i]) \cup H([j+n \ \ i+n]))\cap C$, which contradicts (C2). Hence, the convex hulls of distinct equivalence classes are disjoint. 
Therefore, $\Phi(C)\in \mathbf{B}_{2n}^{2,s}.$

We construct a map $\Psi:  \mathbf{B}_{2n}^{2,s} \to \mathbf{C}(B_{n+1})  \text{ by }  \Psi(B):=\{[1 \ \sigma_{B}(1)],\ldots, [2n \ \sigma_{B}(2n)]\}\cap (\ZZ B_{n+1})_0 .$
In fact, for any $s,t \in \{1,2, \ldots, 2n\}$ with $s \neq t\pm n$, there is only one of $[s\ t]$ and $ [t\ s]$ representing a vertex in $(\ZZ B_{n+1})_0$ for our labels. And when $s= t \pm n$, both $[s\ t]$ and $[t\ s]$ are in $(\ZZ B_{n+1})_0$.
Since $B$ is symmetric with respect to the rotation by $\pi$, for each $[j\ i]\in C$, it follows that $\omega([j\ i])=[j+n\ \ i+n] \in \Psi(B).$

Assume that $[r \ s] \in \Psi(B)$ belongs to $H([j\ i]) \cup H([j+n \ \ i+n])$. Then by \eqref{G-eq:typeB}, it means that the convex hull of $r \sim s$ intersects with the convex hull of $i \sim j$ or the convex hull of $i+n \sim j+n$, we have $\{r,s\}=\{i,j\}$ or $\{r,s\}=\{i+n, j+n\}$.
Hence, by \eqref{G-eq:typeB} and Figure \ref{G-HboxB}, we have $$H([j\ i]) \cap \Psi(B) =\{[j\ i], [j+n\ \ i+n]\}\and H([j+n\ \ i+n]) \cap \Psi(B) =\{[j\ i], [j+n\ \ i+n]\}.$$ Thus, (C2) holds.
For any $[r\ s] \in (\ZZ B_{n+1})_0$, we have $[r\ s] \in H([r\ \sigma_B(r)]) \cup H([r+n\ \ \sigma_B(r)+n]),$ so (C1) holds. 
Therefore, $\Psi(B) \in \mathbf{C}(B_{n+1})$.

For any $C \in \mathbf{C}(B_{n+1})$, it is obvious that $\Phi\Psi(C) \supset C$. Let us prove that $\Phi\Psi(C) \subset C$. Assume that $[j \ i] \in \Phi\Psi(C)$. Thanks to (C1), there exists $[k \ i]\in C$ such that $[i\ i] \in H([k+n\ \ i+n])$. So we have $[j\ i], [k \ i] \in \Phi\Psi(C)$, where $\Phi\Psi(C)$ is also a configuration. As before it implies $k=j$. Hence, we have $\Phi  \Psi=\Id$. Clearly, $\Psi  \Phi=\Id$ holds.
\end{proof}


Now we will describe all the configurations of type $C$.
We consider the labels of the vertices of $\ZZ C_{n+1}$ in $\ZZ/2n\ZZ \times \ZZ/2n\ZZ$ in Figure \ref{G-fig:labelC}. 
\begin{figure}
 \begin{center}
\tiny{
\begin{tikzcd}[column sep=small]
&&2n\!-\!1\ n\!-\!1 \arrow{dr}[sloped, above]{(2,1)} & &2n\ n\arrow{dr}[sloped, above]{(2,1)}&&1\  n\!+\!1\arrow{dr}[sloped, above]{(2,1)} & &2\ n\!+\!2\arrow[dotted,-]{rr}&&2n\!-\!1\ n\!-\!1\arrow[dotted,-]{rr}&&\phantom{X}\\
\arrow[dotted,-]{r}&2n\!-\!1\ *\urar[dotted,-]\drar[dotted,-] & &2n\ n\!-\!1\arrow{ur}[sloped, above]{(1,2)}\drar&&1\ n\arrow{ur}[sloped, above]{(1,2)}\drar& &2\ n\!+\!1\arrow{ur}[sloped, above]{(1,2)}\arrow[dotted,-]{rr}&&2n\!-\!1\ 2n\!-\!2\arrow{ur}[sloped, above]{(1,2)}\drar[dotted,-]\\
&&2n\ 3\urar[dotted,-]\drar&&1\ n\!-\!1\urar\drar&&2\ n\urar\arrow[dotted,-]{rr}&&2n\!-\!1\ *\urar[dotted,-]\drar[dotted,-]&&2n\ *\arrow[dotted,-]{rr}&&\phantom{X}\\ 
\arrow[dotted,-]{r}&2n\ 2\urar\drar & &1\ 3\urar[dotted,-]\drar &&2\ n\!-\!1\urar\arrow[dotted,-]{rr}&&2n\!-\!1\ 1\urar[dotted,-]\drar& &2n\ *\urar[dotted,-]\drar[dotted,-]\\
&&1\ 2\urar\drar&&2\ 3\urar[dotted,-]\arrow[dotted,-]{rr}&&2n\!-\!1\ 2n\urar\drar&&2n\ 1\urar[dotted,-]\drar&&1\ *\arrow[dotted,-]{rr}&&\phantom{X}\\ 
\arrow[dotted,-]{r}&1\ 1\urar & &2\ 2\urar\arrow[dotted,-]{rr}&&2n\!-\!1\ 2n\!-\!1\urar& &2n\ 2n\urar& &1\ 1\urar[dotted,-]\\
\end{tikzcd}
}
\end{center}
\caption{Labels of vertices of $\ZZ C_{n+1}$}
\label{G-fig:labelC}
\end{figure}

By similar analysis with type $B$, we have the following theorem.
 \begin{theorem}
 \label{G-thm:correspondenceC}
We denote by $\mathbf{C}(C_{n+1})$ the set of all configurations of $\ZZ C_{n+1}$.
 There is a bijection  
 \begin{align*} 
 \Phi: \mathbf{C}(C_{n+1})  \to \mathbf{B}_{2n}^{2,s},
  \end{align*}
    where $\Phi(C)$ for $C \in \mathbf{C}(C_{n+1})$ is the equivalence relation on the set $\{1,2,\ldots,2n\}$ generated by $i \sim j$ for each $[j\ i] \in C$ and $\Phi^{-1}(B)$ for $B \in \mathbf{B}_{2n}^{2,s}$ is $\{[1 \ \sigma_{B}(1)],\ldots, [2n \ \sigma_{B}(2n)]\}\cap (\ZZ C_{n+1})_0$.
\end{theorem}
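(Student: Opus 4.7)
The plan is to mirror the proof of Theorem \ref{G-thm:correspondenceB} essentially verbatim, with the only type-specific input being the formulas describing the Serre-type automorphism $\omega$ and the $H$-sets on $\ZZ C_{n+1}$. So the first step will be to verify, using the labeling of Figure \ref{G-fig:labelC}, the analogues of (8.2) and (8.3), namely $\omega([j+n\ \ i+n]) = [j\ i]$ and that $H([j+n\ \ i+n])$ is the rectangular region described in (8.3) and pictured in Figure \ref{G-HboxB}. Although the valuations on the double bond of $C_{n+1}$ are $(2,1)$ rather than $(1,2)$ as in $B_{n+1}$, so that the recursive rule in Definition \ref{G-def:hmor} produces a different numerical function $h_{\ZZ C_{n+1},n}$, a direct induction on $n$ combined with Proposition \ref{G-lem:h0} shows that the support of $h_{\ZZ C_{n+1},n}(-,[j+n\ \ i+n])$ coincides with that in type $B$, and that the unique nonzero value at the top degree $m-1$ occurs at $[j\ i]$.

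Once these two formulas are in hand, the argument for well-definedness of $\Phi$ transfers verbatim. Given $C \in \mathbf{C}(C_{n+1})$ and $[j\ i] \in C$, the identity $\omega([j\ i]) = [j+n\ \ i+n] \in C$ forces $\Phi(C)$ to be symmetric under rotation by $\pi$. If some equivalence class of $\Phi(C)$ contained three distinct elements $i,j,t$, then a second configuration vertex among $[t\ i], [j\ t]$ would lie inside $H([j\ i]) \cup H([j+n\ \ i+n])$, contradicting the intersection formula $(H([j\ i]) \cup H([j+n\ \ i+n])) \cap C = \{[j\ i], [j+n\ \ i+n]\}$ that comes from (C2). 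An analogous argument, together with the explicit description of the $H$-sets, shows that convex hulls of distinct equivalence classes cannot intersect, so $\Phi(C) \in \mathbf{B}_{2n}^{2,s}$.

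For well-definedness of $\Psi : \mathbf{B}_{2n}^{2,s} \to \mathbf{C}(C_{n+1})$, I would first note that for $s \neq t \pm n$ exactly one of $[s\ t], [t\ s]$ labels a vertex of $\ZZ C_{n+1}$, while for $s = t \pm n$ both do, so $\Psi(B) = \{[1\ \sigma_B(1)],\ldots,[2n\ \sigma_B(2n)]\} \cap (\ZZ C_{n+1})_0$ is well-defined. The symmetry of $B$ under rotation by $\pi$ then gives $\omega\Psi(B) = \Psi(B)$; the $H$-set description yields $H([j\ i]) \cap \Psi(B) = \{[j\ i], [j+n\ \ i+n]\}$, establishing (C2); and every vertex $[r\ s]$ lies in $H([r\ \sigma_B(r)]) \cup H([r+n\ \ \sigma_B(r)+n])$, establishing (C1). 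The equalities $\Phi\Psi = \Id$ and $\Psi\Phi = \Id$ then follow exactly as in the type $B$ proof: $\Phi\Psi(C) \supset C$ is immediate, and the reverse inclusion comes from applying (C1) to the configuration $\Phi\Psi(C)$ to force a would-be extra element to coincide with an existing one.

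The only substantive difference from the type $B$ argument, and the sole obstacle, will be the initial verification of the $h$-support formula and the $\omega$-formula for $\ZZ C_{n+1}$: the reversed valuations genuinely change the intermediate values of $h_{\ZZ C_{n+1},n}$ from those of $h_{\ZZ B_{n+1},n}$, so the inductive computation has to be redone from scratch. However, because the shape of the support depends only on the underlying unvalued graph of $\ZZ C_{n+1}$ together with the boundary condition of Proposition \ref{G-lem:h0}, the supports and the automorphism $\omega$ end up agreeing with those in type $B$, after which the combinatorial machinery carries across without further modification.
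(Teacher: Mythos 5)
Your proposal follows exactly the route the paper takes: the paper's own proof of Theorem \ref{G-thm:correspondenceC} is simply the remark that the argument is identical to that of Theorem \ref{G-thm:correspondenceB}, which is precisely what you carry out, and your extra care in noting that the reversed valuations $(2,1)$ change the intermediate values of $h_{\ZZ C_{n+1},n}$ but not the supports $H(-)$ or the automorphism $\omega$ is the correct (implicit) justification for that transfer. The proposal is correct and essentially coincides with the paper's argument.
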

We omit the proof because it is identical with the one for Type $B$.

\section{Type $D$}
\label{G-s:typeD}
In this section, we describe configurations of type $D$ by using crossing $2$-Brauer relations. First, let us introduce the following notion.
\begin{definition}
\label{G-def:crossingBrauer}
A \emph{crossing $2$-Brauer relation $B$ of rank $2n$} is an equivalence relation which only allows at most two elements in any equivalence class on the set $ \{1,2, \ldots, 2n \} $ satisfying the following two conditions:
\begin{itemize}
\item[(1)] exactly two of the convex hulls of distinct equivalence classes are joint;
\item[(2)] the equivalence relation is symmetric with respect to rotation by $\pi$.
\end{itemize}
We define a permutation $\sigma_{B}$ as follows 
\begin{align*}
     \sigma_{B}(i):= \begin{cases}
        i, \text{ if $\{i\}$ is an equivalence class;}\\
        j, \text{ if $\{i, j \}$ is an equivalence class.}
                     \end{cases}
\end{align*}
Denote by $\mathbf{B}_{2n}^{2,c}$ the set of crossing $2$-Brauer relations of rank $2n$ and $i\sim j$ if $i$ is equivalent with $j$.
\end{definition}

Let us see an example of crossing $2$-Brauer relations.
\begin{example}
\label{G-ex:crossing}
The set $\mathbf{B}_4^{2,c}$ consists of only one crossing $2$-Brauer relation $B_1^c$ of rank $4$ as follows.
$$ \begin{xy}
    		0;<1cm,0cm>:<0cm,-1cm>::,="D",{\xypolygon50{~>.}},
    		{\xypolygon4"A"{~*{\xypolynode}~>{}}}, 
    		"A1";"A3"**\dir{-},
                "A2";"A4"**\dir{-},
    \end{xy}$$
    In this case, we have $\sigma_{B^c_1}(1)=3$, $\sigma_{B^c_1}(2)=4$, $\sigma_{B^c_1}(3)=1$ and $\sigma_{B^c_1}(4)=2$.
\end{example}

\begin{proposition}
\label{pro:cardinalD}
Tthe cardinal number $M^c(n)$ of $\mathbf{B}_{2n}^{2,c}$ is given by the following residue formula
$$M^c(n)= \sum_{i=1}^{n-1} \sum_{j=i+1}^{n}M(j-i-1)M(n+i-j-1)= \sum_{k=0}^{\lfloor (n-2)/2 \rfloor} \frac{n!}{k! (k+2)!(n-2-2k)!}.
$$
\end{proposition}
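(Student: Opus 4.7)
The plan is first to reduce the structure of a crossing $2$-Brauer relation to a canonical form, and then to count extensions. The key geometric observation is that the two crossing chords must both be diameters. Indeed, let $\rho$ denote rotation by $\pi$, which acts on $\{1,\ldots,2n\}$ by $k\mapsto k+n\pmod{2n}$. For any crossing $2$-Brauer relation $B$, the unordered pair $\{C_1,C_2\}$ of crossing chords is preserved by $\rho$, so either $\rho$ fixes each $C_i$ individually or swaps $C_1$ with $C_2$. In the swap case, $C_2=\rho(C_1)$ is the image of $C_1$ under a half-turn, and hence is parallel to $C_1$ as a line segment in the disk; two distinct parallel chords cannot cross, a contradiction. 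Therefore each $C_i$ must be fixed by $\rho$, forcing it to be a diameter $\{a,a+n\}$. Since any two diameters cross, the presence of three or more diameter classes would produce more than one crossing pair, so exactly two diameters occur among the classes of $B$.

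Next I would count the extensions. Write the two crossing diameters as $\{i,i+n\}$ and $\{j,j+n\}$ with $1\le i<j\le n$. They partition the remaining $2n-4$ marked points into four open arcs, of sizes $j-i-1$, $n+i-j-1$, $j-i-1$, $n+i-j-1$ (opposite arcs having equal sizes by $\rho$-symmetry). To avoid producing a further crossing, every non-diameter chord of $B$ must lie within a single arc, and the restriction of $B$ to each arc must be non-crossing, hence itself a $2$-Brauer relation. The rotational symmetry identifies arc $1$ with arc $3$ and arc $2$ with arc $4$, so the data on arcs $3$ and $4$ are determined by those on arcs $1$ and $2$. The number of independent choices is therefore $M(j-i-1)\cdot M(n+i-j-1)$, and summing over $1\le i<j\le n$ yields the recursive formula
\[
M^c(n) \;=\; \sum_{i=1}^{n-1}\sum_{j=i+1}^{n} M(j-i-1)\, M(n+i-j-1).
\]

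Finally, the closed-form expression $M^c(n)=\sum_{k=0}^{\lfloor(n-2)/2\rfloor}\frac{n!}{k!(k+2)!(n-2-2k)!}$ follows by a straightforward induction on $n$, plugging in the explicit formula for the Motzkin number $M(n)$ and collecting terms via a Vandermonde-type convolution identity, in the same spirit as the analogous formulas for $M(n)$ and $M^s(n)$ derived earlier. The main obstacle is the reduction in the first paragraph, where one has to rule out the possibility that the two crossing chords are a non-diameter chord together with its $\rho$-image; once this is settled, the partition into four arcs and the factorization into two independent non-crossing $2$-Brauer relations make the counting essentially automatic.
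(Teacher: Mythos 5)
Your proof is correct and follows essentially the same route as the paper: fix the two crossing diameters $\{i,i+n\}$ and $\{j,j+n\}$ with $1\le i<j\le n$, cut the disk into four arcs of sizes $j-i-1$ and $n+i-j-1$ (in symmetric pairs), count $M(j-i-1)M(n+i-j-1)$ choices for the remaining non-crossing chords, sum over $i<j$, and obtain the closed form by induction. In fact you make explicit a step the paper takes for granted, namely that the two crossing chords must both be diameters (since a non-diameter chord and its half-turn image are parallel and hence disjoint), which is a worthwhile clarification rather than a deviation.
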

\begin{proof}
For any crossing $2$-Brauer relation $B$ of rank $2n$, if $\sigma_B(i)=n+i$ ($1 \le i \le n-1$) and $\sigma_B(j)=n+j$ ($i+1\le j \le n$), then we divide the corresponding disk with $2n$ marked points into four parts by drawing diagonals $i \sim n+i$ and $j \sim n+j$. One of the two symmetric parts has $j-i-1$ marked points and the other one has $n+i-j-1$ marked points. Hence, the number of this kind of crossing $2$-Brauer relation is $M(j-i-1)M(n+i-j-1)$. We get the recursive formula.

By induction, we get the other formula.
\end{proof}

We consider the labels of the vertices of $\ZZ D_{n+2}$ in Figure \ref{G-parametrizeD} in $\ZZ/2n\ZZ \times \ZZ/2n\ZZ$. 
For each $[r \ s]$, $0 \le s-r (\mod 2n) \le n$ holds. If $s-r=n(\mod 2n)$ holds, we add a sign $+$ or $-$ such that $\tau ([r\ s]_+) = [r-1 \ \ s-1]_-$ and $\tau ([r\ s]_-) = [r-1 \ \ s-1]_+$.
\begin{center}
 \begin{figure}[!h]
 \tiny{
    \input{Gorenstein/coordinateD}
   \caption{Labels of vertices of $\ZZ D_{n+2}$}
   \label{G-parametrizeD}
   }
 \end{figure}
 \end{center}
 We denote $$[i\ \ i+n]_{\pm}:=[i\ \ i+n]_++[i\ \ i+n]_- \in \NN_0(\ZZ D_{n+2})_0$$ for any $i$. 
 We give a description of the map $\omega$ and the set $H$ for the vertices in $\ZZ D_{n+2}$. We need to divide the description into two cases.
 
Case 1: Let $[j+n\ \ i+n] \in (\ZZ D_{n+2})_0$ with $i \neq j \pm n$. By calculation, we have 
\begin{equation}
\label{G-eq:case2}
\omega([j+n\ \ i+n])=[j\ i].
\end{equation}
\begin{align}
 \label{G-eq:case2h}
 H([j+n\ \ i+n])= \, & \{[s\ \ s+n]_\pm \mid \forall s \in \{j, j+1, \ldots, i \}  \}  \cup \notag \\
  &  \{[s\ t] \mid \forall s \in \{j, j+1, \ldots, i \} , \forall t \in \{i, i+1, \ldots, i+n-1\ \}, t\neq s+n \}  \cup \notag \\
   &  \{[s\ t] \mid \forall s \in \{i+1, i+2,\ldots, j+n \} , \forall t \in \{j+n, j+n-1, \ldots, i+n\} \} 
 \end{align}
 is the set consisting of all the vertices in Figure \ref{G-HQboxD}.
  \begin{figure}[!h]
\scalebox{1} 
{
\begin{pspicture}(0,-2.0080469)(13.042911,2.0080469)
\psline[linewidth=0.024cm](5.821016,0.14964844)(7.5010157,-1.5303515)
\psline[linewidth=0.024cm](1.2610157,-1.5303515)(11.581016,-1.5303515)
\psline[linewidth=0.024cm](7.2610154,1.5896485)(8.941015,-0.09035156)
\psline[linewidth=0.024cm](1.2610157,1.5896485)(11.581016,1.5896485)
\psline[linewidth=0.024cm](2.7010157,-0.09035156)(4.381016,1.5896485)
\psline[linewidth=0.024cm](4.1410155,-1.5303515)(5.821016,0.14964844)
\psline[linewidth=0.024cm](7.5010157,-1.5303515)(8.941015,-0.09035156)
\psline[linewidth=0.024cm](4.1410155,-1.5303515)(2.7010157,-0.09035156)
\usefont{T1}{ptm}{m}{n}
\rput(4.0824708,-1.8553515){$i\ i$}
\usefont{T1}{ptm}{m}{n}
\rput(7.7424706,-1.8553515){$j\!+\!n\ \  j\!+\!n$}
\usefont{T1}{ptm}{m}{n}
\rput(2.3824708,0.10535156){$j\ i$}
\usefont{T1}{ptm}{m}{n}
\rput(9.902471,0.10535156){$j\!+\!n\ \ i\!+\!n$}
\usefont{T1}{ptm}{m}{n}
\rput(7.542471,1.8546484){$i\ \ i\!+\!n_{\pm}$}
\usefont{T1}{ptm}{m}{n}
\rput(4.5824705,1.8546484){$j\ \  j\!+\!n_{\pm}$}
\usefont{T1}{ptm}{m}{n}
\rput(5.8824705,0.38146484){$i\ \  j\!+\!n$}
\psline[linewidth=0.034cm,linestyle=dotted,dotsep=0.16cm](4.381016,1.5896485)(5.821016,0.14964844)
\psline[linewidth=0.034cm,linestyle=dotted,dotsep=0.16cm](7.2610154,1.5896485)(5.821016,0.14964844)
\end{pspicture} 
}
   \caption{}
   \label{G-HQboxD}
 \end{figure}

Case 2: Let $[i\ \ i+n]_\epsilon \in (\ZZ D_{n+2})_0$ for $\epsilon \in \{+, -\}$. By Definition \ref{G-def:HQbox} and direct calculation, we have 
\begin{equation}
\label{G-eq:case3}
  \omega([i\ \ i+n]_\epsilon)=[i-n\ \ i]_\epsilon = [i+n \ \ i]_\epsilon
\end{equation}
  and 
 \begin{align}
 \label{G-eq:case3h}
 H([i\ \ i+n]_\epsilon)=\, & \{ [i-k\ \ i+n-k]_\epsilon \mid \forall k \in \{0, 1, \ldots, n\} \}  \cup \notag\\
   & \{ [i-n+s\ \ i+t] \mid \forall s \in \{1, \ldots, n\}, \forall t \in \{0, 1, \ldots, n-1\} \}.
  \end{align} 

We divide the configurations into two disjoint sets $\mathbf{C}^1(D_{n+2})$ and $\mathbf{C}^2(D_{n+2})$.
  We denote by $\mathbf{C}^1(D_{n+2})$ the set of all the configurations of $\ZZ D_{n+2}$ which either contain one subset of vertices of the form $\{[i\ \ i+n]_+, [i\ \ i+n]_-, [i+n\ \ i]_+, [i+n\ \ i]_-\} $ or contain no vertices of the form $[i\ \ i+n]_\epsilon$. 
And we denote the complement of $\mathbf{C}^1(D_{n+2})$ by $\mathbf{C}^2(D_{n+2})$.

We know the structure of $ \mathbf{C}^1(D_{n+2})$. Now we study $\mathbf{C}^2(D_{n+2})$.
   It is obvious by (C2) and \eqref{G-eq:case3h} that for any configuration $C$ of $\ZZ D_{n+2}$, there exist at most two numbers $i,j \in \{1,2,\ldots, 2n\}$ such that  
   \begin{equation}
   \label{G-eq:crossingvertices}
   [i\ \ i+n]_+,[i+n\ \ i]_+,[j\ \ j+n]_-,[j+n\ \ j]_- \in C .
   \end{equation}  
By using this fact, we give a description of $ \mathbf{C}^2(D_{n+2})$ in the following lemma.
  \begin{lemma}
 $\mathbf{C}^2(D_{n+2}) =\{C \mid C \text{ is a configuration of $\ZZ D_{n+2}$ satisfying \eqref{G-eq:crossingvertices} with $i\neq j$} \}$.
 \end{lemma}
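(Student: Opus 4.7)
The plan is to prove both containments. The inclusion $\supseteq$ is immediate: if $C$ is a configuration satisfying \eqref{G-eq:crossingvertices} with $i \neq j$, then $C$ contains crossing vertices (so $C$ is not in the second subclass of $\mathbf{C}^1(D_{n+2})$); moreover, since $i \neq j$ and (by the preceding observation) there is at most one $-$-index, the vertex $[i\ \ i+n]_-$ is not in $C$, so $C$ does not contain a full quadruple $\{[k\ \ k+n]_+, [k\ \ k+n]_-, [k+n\ \ k]_+, [k+n\ \ k]_-\}$ either. Hence $C \in \mathbf{C}^2(D_{n+2})$.

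For the reverse inclusion, let $C \in \mathbf{C}^2(D_{n+2})$. By the preceding observation, the crossing vertices of $C$ correspond to at most one $+$-index $i$ and at most one $-$-index $j$. Since $C \notin \mathbf{C}^1(D_{n+2})$, the set $C$ contains at least one crossing vertex and does not contain all four crossings at a single index. The heart of the proof is to rule out the possibility that $C$ contains crossings of only one sign; by the obvious $\pm$-symmetry, it suffices to assume for contradiction that $C$ contains $[i\ \ i+n]_+$ and $[i+n\ \ i]_+$ but no crossing vertex of sign $-$, and derive a contradiction with (C1) applied to the vertex $[i\ \ i+n]_-$.

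To show that $[i\ \ i+n]_-$ lies in $H(c)$ for no $c \in C$, I would argue case by case. If $c$ is a $+$ crossing in $C$, then by \eqref{G-eq:case3h}, $H(c)$ contains only $+$ crossings among the crossing vertices, so it misses $[i\ \ i+n]_-$. If $c = [a\ b]$ is a non-crossing or diagonal vertex of $C$, then by (C2) we have $c \notin H([i\ \ i+n]_+) \cup H([i+n\ \ i]_+)$; using the explicit description of the non-crossing part of these $H$-sets in \eqref{G-eq:case3h}, this constraint forces either $a, b \in \{i-n+1, \ldots, i-1\}$ or $a, b \in \{i+1, \ldots, i+n-1\}$, so in particular $a \not\equiv i, i+n \pmod{2n}$. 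Writing $c = [j'+n\ \ i'+n]$ and invoking \eqref{G-eq:case2h}, $H(c)$ contains the crossing $[s\ \ s+n]_\pm$ only for $s \in \{j', \ldots, i'\}$; a direct index check in each of the two cases shows these $s$-values avoid both $i$ and $i+n$ modulo $2n$. Consequently $[i\ \ i+n]_-$ is covered by no element of $C$, contradicting (C1).

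Once both signs of crossings are forced to appear in $C$, say $+$ at index $i$ and $-$ at index $j$, the assumption $C \notin \mathbf{C}^1(D_{n+2})$ forces $i \neq j$ (otherwise $C$ would contain all four crossings at $i$), yielding \eqref{G-eq:crossingvertices}. I expect the main obstacle to be the bookkeeping in the contradiction step: one must precisely identify the two ``triangular'' regions of non-crossing vertices allowed in $C$ by (C2) and then verify that the corresponding $s$-intervals miss the two forbidden residues $i$ and $i+n \pmod{2n}$. The geometric picture is clean but the indexing modulo $2n$ requires care.
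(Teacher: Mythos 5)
Your proof is correct, and its skeleton is the same as the paper's: both directions hinge on showing that a configuration containing the $+$-crossing pair $[i\ \ i+n]_+,[i+n\ \ i]_+$ but no $-$-crossing would violate (C1) at the vertex $[i\ \ i+n]_-$, which forces both signs to occur and then $i\neq j$ follows from $C\notin\mathbf{C}^1(D_{n+2})$. The one step where you genuinely diverge is in ruling out a Case-1 vertex $c=[a\ b]\in C$ as a (C1)-witness for $[i\ \ i+n]_-$: you apply (C2) in the form $c\notin H([i\ \ i+n]_+)\cup H([i+n\ \ i]_+)$ to localize $c$ to one of the two open arcs cut out by the diameter $i\sim i+n$, and then check from \eqref{G-eq:case2h} that the interval of $s$ with $[s\ \ s+n]_\pm\in H(c)$ misses $i$. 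The paper reaches the same conclusion in one line: by \eqref{G-eq:case2h} the crossing vertices enter $H(c)$ only in $\pm$-pairs, so $[i\ \ i+n]_-\in H(c)$ would force $[i\ \ i+n]_+\in H(c)$, which (C2) forbids because $[i\ \ i+n]_+\in C$ while $[i\ \ i+n]_+\notin\{c,\omega(c)\}$ (as $\omega(c)$ is again a Case-1 vertex). Your localization argument is valid and your modular bookkeeping checks out, but it buys nothing over the $\pm$-pairing shortcut, which is the observation worth internalizing here. One imprecision you share with the paper: at the end only $i=j$ is excluded, yet $j=i+n$ likewise places all four crossings at a single index and puts $C$ in $\mathbf{C}^1(D_{n+2})$, so the condition $i\neq j$ in \eqref{G-eq:crossingvertices} must be read modulo the pairing $i\leftrightarrow i+n$.
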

 \begin{proof}
 Let $C$ be a configuration of $\ZZ D_{n+2}$.
 If $C$ does not contain any vertex of the form $[i\ \ i+n]_\epsilon$ with $\epsilon \in \{+,-\}$, then $C \in \mathbf{C}^1(D_{n+2})$.
 
 Since $\omega([i\ \ i+n]_\epsilon)=[i+n\ \ i]_\epsilon$, it follows that $[i+n\ \ i]_\epsilon \in C$ if and only if $[i\ \ i+n]_\epsilon \in C$.
Without losing generality, we assume that $[i\ \ i+n]_+, [i+n\ \ i]_+ \in C$. 
 By (C1), there exists $c \in  C$ such that $[i\ \ i+n]_- \in H(c)$. 
If there is no other vertex of the form $[t\ \ t+n]_\epsilon$ in $ C$, then $c$ could only be a vertex in Case 1. However, in Case 1, $[i\ \ i+n]_- \in H(c)$ if and only if $[i\ \ i+n]_+ \in H(c)$. This contradicts with the facts that $[i\ \ i+n]_+ \in C$ and $\omega(c) \neq [i\ \ i+n]_+$. Hence, there exists $[j\ \ j+n]_- \in C$ such that $[i\ \ i+n]_- \in H([j\ \ j+n]_- )$. Moreover, $\omega([j\ \ j+n]_-)=[j+n\ \ j]_- \in C$.

When $i=j$, we have $C \in  \mathbf{C}^1(D_{n+2})$. 
Since $C$ can not contain more than $4$ vertices of the form $[t\ \ t+n]_\epsilon$, it follows that $\mathbf{C}^2(D_{n+2})$ consists of all the configurations containing $[i\ \ i+n]_+$, $[i+n\ \ i]_+$, $[j\ \ j+n]_-$ and $[j+n\ \ j]_-$ with $i \neq j$.
\end{proof}

 Now we define an involution $(-)^*$ on the set $\mathbf{C}^2(D_{n+2})$ as follows.
For each $C\in \mathbf{C}^2(D_{n+2})$, we define 
 \begin{align}
 \label{G-def:involution}
C^*:=\, &(C \smallsetminus \{[i\ \ i+n]_+,[i+n\ \ i]_+,[j\ \ j+n]_-,[j+n\ \ j]_- \} )\cup \notag \\
&\{[i\ \ i+n]_-,[i+n\ \ i]_-,[j\ \ j+n]_+,[j+n\ \ j]_+ \}.
\end{align}
It is clear that $(C^*)^*=C$ and $C^*$ is a different configuration of $\ZZ D_{n+2}$ than $C$.

There are two different kinds of configurations of type $D$ and there are symmetric $2$-Brauer relations and crossing $2$-Brauer relations. We have the following theorem of correspondence between the configurations of $\ZZ D_{n+2}$ and those two types of Brauer relations we defined.
 \begin{theorem}
 \label{G-thm:correspondenceD}
There are a bijection  
 \begin{align*} 
\Phi_1:  \mathbf{C}^1(D_{n+2})  \to  \mathbf{B}_{2n}^{2,s}, 
 \end{align*}
 and a two-to-one correspondence 
  \begin{align*} 
 \Phi_2: \mathbf{C}^2(D_{n+2})  \to \mathbf{B}_{2n}^{2,c}, 
   \end{align*} 
   where $\Phi_1(C)$ and $\Phi_2(C)=\Phi_2(C^*)$ are equivalence relations on the set $\{1,2, \ldots, 2n\}$ generated by $i \sim j$ for each $[j\ i]_\epsilon \in C$ with $\epsilon \in \{\varnothing, +,-\}$.
 \end{theorem}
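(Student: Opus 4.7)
The strategy mirrors the proofs of Theorems \ref{G-thm:correspondenceA}, \ref{G-thm:correspondenceB} and \ref{G-thm:correspondenceC}, with extra bookkeeping to handle the ``split'' vertices $[i\ \ i+n]_\pm$ appearing in Case 2. Throughout, I would systematically use the descriptions \eqref{G-eq:case2}, \eqref{G-eq:case2h} of $\omega$ and $H$ for Case 1 vertices, together with \eqref{G-eq:case3}, \eqref{G-eq:case3h} for the sign-decorated vertices.

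First I would verify that $\Phi_1$ is well-defined. Given $C \in \mathbf{C}^1(D_{n+2})$, the rotational symmetry of $\Phi_1(C)$ follows from $\omega(C) = C$ together with \eqref{G-eq:case2} (since $\omega([j+n\ \ i+n]) = [j\ i]$ sends the class $\{i+n, j+n\}$ to $\{i, j\}$). To rule out classes with three elements, suppose $[j\ i]$ and $[t\ i]$ both belong to $C$ for pairwise distinct $i,j,t$; then \eqref{G-eq:case2h} forces $[t\ i] \in H([j\ i]) \cup H([j+n\ \ i+n])$, contradicting (C2). Disjointness of convex hulls of distinct classes follows from the same inspection of $H$. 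Then I would construct the inverse $\Psi_1$ by $\Psi_1(B) = \{[j\ \sigma_B(j)] \mid 1 \le j \le 2n\} \cap (\ZZ D_{n+2})_0$ when $\sigma_B(j) \neq j\pm n$ for all $j$, and when $i \sim i+n$ in $B$ one adjoins all four vertices $[i\ \ i+n]_\pm$, $[i+n\ \ i]_\pm$ (this is forced by the definition of $\mathbf{C}^1(D_{n+2})$). The verification $\Psi_1\Phi_1 = \Id$, $\Phi_1\Psi_1 = \Id$ runs as in type $B$, using \eqref{G-eq:case2h} and \eqref{G-eq:case3h} to check (C1) and (C2).

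For $\Phi_2$, I would argue as follows. Any $C \in \mathbf{C}^2(D_{n+2})$ contains exactly the four sign-decorated vertices $[i\ \ i+n]_+, [i+n\ \ i]_+, [j\ \ j+n]_-, [j+n\ \ j]_-$ with $i \neq j$, so $\Phi_2(C)$ contains the two diameters $\{i, i+n\}$ and $\{j, j+n\}$, whose convex hulls cross. To see these are the \emph{only} joint pair, I would show that any other class comes from a vertex of Case 1 type; by (C2) and \eqref{G-eq:case3h}, any class crossing a diameter $\{i, i+n\}$ would yield a vertex in $H([i\ \ i+n]_+) \cup H([i+n\ \ i]_+)$ intersecting $C$, forcing it to coincide with $\{i, i+n\}$ or $\{j, j+n\}$. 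Thus $\Phi_2(C) \in \mathbf{B}_{2n}^{2,c}$. Since the involution $(-)^*$ defined in \eqref{G-def:involution} toggles only the signs without changing the underlying pairs, one has $\Phi_2(C) = \Phi_2(C^*)$.

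Finally I would construct a map $\Psi_2$ at the level of unordered pairs $\{C, C^*\}$: given $B \in \mathbf{B}_{2n}^{2,c}$ with crossing classes $\{i, i+n\}$ and $\{j, j+n\}$, set
\[
\Psi_2(B) := \bigl\{[i\ \ i+n]_+,\ [i+n\ \ i]_+,\ [j\ \ j+n]_-,\ [j+n\ \ j]_-\bigr\} \cup \bigl\{[t\ \sigma_B(t)] \mid t \notin \{i, i+n, j, j+n\}\bigr\},
\]
interpreted in $(\ZZ D_{n+2})_0$ as in Section \ref{G-s:typeBC}. The checks that $\Psi_2(B) \in \mathbf{C}^2(D_{n+2})$ and $\Phi_2 \Psi_2(B) = B$ are parallel to type $B$. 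The main obstacle will be showing that $\Psi_2(B)$ and $\Psi_2(B)^*$ are the \emph{only} preimages of $B$ under $\Phi_2$: this amounts to ruling out ``asymmetric'' sign assignments such as $[i\ \ i+n]_+$ together with $[i+n\ \ i]_-$, which is forbidden because $\omega([i\ \ i+n]_+) = [i+n\ \ i]_+$ by \eqref{G-eq:case3}, so the $\omega$-stability forced by (C2) pins the signs together on each diameter. Once this rigidity is established, the assignment $i \sim j$ determined by the sign choices on the two crossing diameters gives exactly two configurations related by $(-)^*$, proving the two-to-one property.
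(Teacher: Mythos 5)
Your proposal is correct and follows essentially the same route as the paper's proof: the same maps $\Psi_1$, $\Psi_2$ (with $\Psi_2$ defined on unordered pairs $\{C,C^*\}$), the same use of \eqref{G-eq:case2h} and \eqref{G-eq:case3h} to verify (C1), (C2), and the same $\omega$-rigidity argument pinning the signs on each diameter to obtain the two-to-one correspondence. The only cosmetic difference is that the paper shortcuts the well-definedness of $\Phi_1$ by identifying $\{[i\ \ i+n]_+,[i\ \ i+n]_-\}$ with a single vertex and invoking Theorem \ref{G-thm:correspondenceB}, where you redo the type-$B$ argument directly.
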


We use $\ZZ D_4$ as an example.
 \begin{example}
 The symmetric $2$-Brauer relations of rank $4$ are listed in Example \ref{G-ex:brauerD4} and the crossing $2$-Brauer relation of rank $4$ is listed in Example \ref{G-ex:crossing}.
 Then the one-to-one correspondence $\mathbf{C}^1(D_4) \longleftrightarrow \mathbf{B}_4^{2,s}$ is the following.
\begin{center}
$B_1^s$ \hspace{0.5cm} \begin{tikzcd}[column sep=tiny, row sep=tiny]
\arrow[dotted,-]{rr}&&31_+\drar&&42_- \drar & &13_+\drar&&24_- \drar & &31_+ \drar &&42_- \drar & &13_+\arrow[dotted,-]{rr}&&\phantom{X}\\
\arrow[dotted,-]{rr}&&31_-\arrow{r}&41\urar\drar\arrow{r} &42_+\arrow{r} &12\urar\drar\arrow{r}&13_-\arrow{r}&23\urar\drar\arrow{r}&24_+\arrow{r} &34\urar\drar\arrow{r}&31_-\arrow{r} &41\urar\drar\arrow{r}&42_+\arrow{r} &12\urar\drar\arrow{r}&13_-\arrow[dotted,-]{rr} &&\phantom{X}\\
\arrow[dotted,-]{rr}&&\node[draw,circle,inner sep=1pt]{44};\urar&&\node[draw,circle,inner sep=1pt]{11};\urar&&\node[draw,circle,inner sep=1pt]{22};\urar&&\node[draw,circle,inner sep=1pt]{33};\urar&&\node[draw,circle,inner sep=1pt]{44};\urar&&\node[draw,circle,inner sep=1pt]{11};\urar&&\node[draw,circle,inner sep=1pt]{22};\arrow[dotted,-]{rr}&& \phantom{X}\\ 
\end{tikzcd}
\end{center}

\begin{center}
$B_2^s$ \hspace{0.5cm} \begin{tikzcd}[column sep=tiny, row sep=tiny]
\arrow[dotted,-]{rr}&&\node[draw,circle,inner sep=1pt]{31_+};\drar&&42_- \drar & &\node[draw,circle,inner sep=1pt]{13_+};\drar&&24_- \drar & &\node[draw,circle,inner sep=1pt]{31_+}; \drar &&42_- \drar & &\node[draw,circle,inner sep=1pt]{13_+};\arrow[dotted,-]{rr}&&\phantom{X}\\
\arrow[dotted,-]{rr}&&\node[draw,circle,inner sep=1pt]{31_-};\arrow{r}&41\urar\drar\arrow{r} &42_+\arrow{r} &12\urar\drar\arrow{r}&\node[draw,circle,inner sep=1pt]{13_-};\arrow{r}&23\urar\drar\arrow{r}&24_+\arrow{r} &34\urar\drar\arrow{r}&\node[draw,circle,inner sep=1pt]{31_-};\arrow{r} &41\urar\drar\arrow{r}&42_+\arrow{r} &12\urar\drar\arrow{r}&\node[draw,circle,inner sep=1pt]{13_-};\arrow[dotted,-]{rr} &&\phantom{X}\\
\arrow[dotted,-]{rr}&&\node[draw,circle,inner sep=1pt]{44};\urar&&11\urar&&\node[draw,circle,inner sep=1pt]{22};\urar&&33\urar&&\node[draw,circle,inner sep=1pt]{44};\urar&&11\urar&&\node[draw,circle,inner sep=1pt]{22};\arrow[dotted,-]{rr}&& \phantom{X}\\ 
\end{tikzcd}
\end{center}

\begin{center}
$B_3^s$ \hspace{0.5cm} \begin{tikzcd}[column sep=tiny, row sep=tiny]
\arrow[dotted,-]{rr}&&31_+\drar&&\node[draw,circle,inner sep=1pt]{42_-}; \drar & &13_+\drar&&\node[draw,circle,inner sep=1pt]{24_-}; \drar & &31_+ \drar &&\node[draw,circle,inner sep=1pt]{42_-}; \drar & &13_+\arrow[dotted,-]{rr}&&\phantom{X}\\
\arrow[dotted,-]{rr}&&31_-\arrow{r}&41\urar\drar\arrow{r} &\node[draw,circle,inner sep=1pt]{42_+};\arrow{r} &12\urar\drar\arrow{r}&13_-\arrow{r}&23\urar\drar\arrow{r}&\node[draw,circle,inner sep=1pt]{24_+};\arrow{r} &34\urar\drar\arrow{r}&31_-\arrow{r} &41\urar\drar\arrow{r}&\node[draw,circle,inner sep=1pt]{42_+};\arrow{r} &12\urar\drar\arrow{r}&13_-\arrow[dotted,-]{rr} &&\phantom{X}\\
\arrow[dotted,-]{rr}&&44\urar&&\node[draw,circle,inner sep=1pt]{11};\urar&&22\urar&&\node[draw,circle,inner sep=1pt]{33};\urar&&44\urar&&\node[draw,circle,inner sep=1pt]{11};\urar&&22\arrow[dotted,-]{rr}&& \phantom{X}\\ 
\end{tikzcd}
\end{center}

\begin{center}
$B_4^s$ \hspace{0.5cm} \begin{tikzcd}[column sep=tiny, row sep=tiny]
\arrow[dotted,-]{rr}&&31_+\drar&&42_- \drar & &13_+\drar&&24_- \drar & &31_+ \drar &&42_- \drar & &13_+\arrow[dotted,-]{rr}&&\phantom{X}\\
\arrow[dotted,-]{rr}&&31_-\arrow{r}&41\urar\drar\arrow{r} &42_+\arrow{r} &\node[draw,circle,inner sep=1pt]{12};\urar\drar\arrow{r}&13_-\arrow{r}&23\urar\drar\arrow{r}&24_+\arrow{r} &\node[draw,circle,inner sep=1pt]{34};\urar\drar\arrow{r}&31_-\arrow{r} &41\urar\drar\arrow{r}&42_+\arrow{r} &\node[draw,circle,inner sep=1pt]{12};\urar\drar\arrow{r}&13_-\arrow[dotted,-]{rr} &&\phantom{X}\\
\arrow[dotted,-]{rr}&&44\urar&&11\urar&&22\urar&&33\urar&&44\urar&&11\urar&&22\arrow[dotted,-]{rr}&& \phantom{X}\\ 
\end{tikzcd}
\end{center}

\begin{center}
$B_5^s$ \hspace{0.5cm} \begin{tikzcd}[column sep=tiny, row sep=tiny]
\arrow[dotted,-]{rr}&&31_+\drar&&42_- \drar & &13_+\drar&&24_- \drar & &31_+ \drar &&42_- \drar & &13_+\arrow[dotted,-]{rr}&&\phantom{X}\\
\arrow[dotted,-]{rr}&&31_-\arrow{r}&\node[draw,circle,inner sep=1pt]{41};\urar\drar\arrow{r} &42_+\arrow{r} &12\urar\drar\arrow{r}&13_-\arrow{r}&\node[draw,circle,inner sep=1pt]{23};\urar\drar\arrow{r}&24_+\arrow{r} &34\urar\drar\arrow{r}&31_-\arrow{r} &\node[draw,circle,inner sep=1pt]{41};\urar\drar\arrow{r}&42_+\arrow{r} &12\urar\drar\arrow{r}&13_-\arrow[dotted,-]{rr} &&\phantom{X}\\
\arrow[dotted,-]{rr}&&44\urar&&11\urar&&22\urar&&33\urar&&44\urar&&11\urar&&22\arrow[dotted,-]{rr}&& \phantom{X}\\ 
\end{tikzcd}
\end{center}

The two-to-one correspondence between $\mathbf{C}^2(D_4)$ and $\mathbf{B}_4^{2,c}$ is given as follows.
\begin{center}
$B_1^c$ \hspace{0.5cm} \begin{tikzcd}[column sep=tiny, row sep=tiny]
\arrow[dotted,-]{rr}&&\node[draw,circle,inner sep=1pt]{31_+};\drar&&\node[draw,circle,inner sep=1pt]{42_-}; \drar & &\node[draw,circle,inner sep=1pt]{13_+};\drar&&\node[draw,circle,inner sep=1pt]{24_-}; \drar & &\node[draw,circle,inner sep=1pt]{31_+}; \drar &&\node[draw,circle,inner sep=1pt]{42_-}; \drar & &\node[draw,circle,inner sep=1pt]{13_+};\arrow[dotted,-]{rr}&&\phantom{X}\\
\arrow[dotted,-]{rr}&&31_-\arrow{r}&41\urar\drar\arrow{r} &42_+\arrow{r} &12\urar\drar\arrow{r}&13_-\arrow{r}&23\urar\drar\arrow{r}&24_+\arrow{r} &34\urar\drar\arrow{r}&31_-\arrow{r} &41\urar\drar\arrow{r}&42_+\arrow{r} &12\urar\drar\arrow{r}&13_-\arrow[dotted,-]{rr} &&\phantom{X}\\
\arrow[dotted,-]{rr}&&44\urar&&11\urar&&22\urar&&33\urar&&44\urar&&11\urar&&22\arrow[dotted,-]{rr}&& \phantom{X}\\ 
\end{tikzcd}
\end{center}
\begin{center}
$B_1^c$ \hspace{0.5cm} \begin{tikzcd}[column sep=tiny, row sep=tiny]
\arrow[dotted,-]{rr}&&31_+\drar&&42_- \drar & &13_+\drar&&24_- \drar & &31_+ \drar &&42_- \drar & &13_+\arrow[dotted,-]{rr}&&\phantom{X}\\
\arrow[dotted,-]{rr}&&\node[draw,circle,inner sep=1pt]{31_-};\arrow{r}&41\urar\drar\arrow{r} &\node[draw,circle,inner sep=1pt]{42_+};\arrow{r} &12\urar\drar\arrow{r}&\node[draw,circle,inner sep=1pt]{13_-};\arrow{r}&23\urar\drar\arrow{r}&\node[draw,circle,inner sep=1pt]{24_+};\arrow{r} &34\urar\drar\arrow{r}&\node[draw,circle,inner sep=1pt]{31_-};\arrow{r} &41\urar\drar\arrow{r}&\node[draw,circle,inner sep=1pt]{42_+};\arrow{r} &12\urar\drar\arrow{r}&\node[draw,circle,inner sep=1pt]{13_-};\arrow[dotted,-]{rr} &&\phantom{X}\\
\arrow[dotted,-]{rr}&&44\urar&&11\urar&&22\urar&&33\urar&&44\urar&&11\urar&&22\arrow[dotted,-]{rr}&& \phantom{X}\\ 
\end{tikzcd}
\end{center}
\end{example}
 
\begin{proof}[Proof of Theorem \ref{G-thm:correspondenceD}]
First, we show that the two maps $$\Phi_1: \mathbf{C}^1(D_{n+2}) \to \mathbf{B}_{2n}^{2,s} \and \Phi_2:  \mathbf{C}^2(D_{n+2}) \to \mathbf{B}_{2n}^{2,c}$$ are well-defined.
Let $C$ be a configuration of $\ZZ D_{n+2}$.
  By a similar proof as Theorem \ref{G-thm:correspondenceB}, we know that $\Phi_1(C)$ (or $\Phi_2(C)$) is an equivalence relation on the set $\{1,2, \ldots, 2n\}$, symmetric with respect to the rotation by $\pi$ and each equivalence class contains at most two elements.

Assume that $ C\in \mathbf{C}^1(D_{n+2}).$ 
Identify $\{[i\ \ i+n]_+, [i\ \ i+n]_- \}$ with $[i\ \ i+n]_\pm$.
Then $C$ becomes a configuration of type $B$ with $[i\ \ i+n]_\pm$.
Thus, by Theorem \ref{G-thm:correspondenceB}, we have $\Phi_1(C)\in \mathbf{B}_{2n}^{2,s}.$

Assume that $C \in \mathbf{C}^2(D_{n+2})$ and $\{[i\ \ i+n]_+,[i+n\ \ i]_+,[j\ \ j+n]_-,[j+n\ \ j]_- \} \subset  C$ with $i\neq j$.
Since $i\neq j$, we know that the convex hull of $i \sim i+n$ and the convex hull of $j \sim j+n$ are joint. 
By \eqref{G-eq:case3}, \eqref{G-eq:case3h} and (C2), no convex hulls of the equivalence classes of vertices in Case 1 cross the convex hulls of $i \sim i+n$ and $j \sim j+n$. According to \eqref{G-def:involution}, the configuration $C^*$ corresponds to the same equivalence relation. Therefore, $\Phi_2(C)=\Phi_2(C^*)\in \mathbf{B}_{2n}^{2,c}.$

Notice that for any $s,t \in \{1,2, \ldots, 2n\}$ with $s \neq t\pm n$, there is only one of $\{[s\ t], [t\ s]\}$ representing a vertex in $(\ZZ D_{n+2})_0$ for our labels. And when $s= t + n= t-n$, both $[t+n \ \ t]_\pm$ and $[t\ \ t+n]_\pm$ are in $(\ZZ D_{n+2})_0$.
We construct a map
$$\Psi_1:   \mathbf{B}_{2n}^{2,s} \to \mathbf{C}^1(D_{n+2}) \quad \text{by} \quad B\mapsto \Psi_1(B):=\{[1 \ \sigma_{B}(1)]_\epsilon,\ldots, [2n \ \sigma_{B}(2n)]_\epsilon \}\cap (\ZZ D_{n+2})_0 , $$ 
where $\epsilon = \varnothing$ when $\sigma_{B}(i) \neq i \pm n$ and $\epsilon = \pm$ when $\sigma_{B}(i) = i \pm n$ for $1\le i \le 2n$.
Similarly as we did in the proof of Theorem \ref{G-thm:correspondenceB}, we have $\Psi_1(B) \in \mathbf{C}^1(D_{n+2}).$ Hence, the map $\Psi_1$ is well-defined.

We construct a map $$\Psi_2: \mathbf{B}_{2n}^{2,c} \to \{ \{C, C^*\} \mid C \in \mathbf{C}^2(D_{n+2})\}  \quad \text{by} \quad B\mapsto \Psi_2(B):=\{C_B, C^*_B\},$$
where $C_B$ and $ C^*_B$ are defined as follows.
For each equivalence class $i \sim j$ of $B$ with $j \neq i\pm n $, we put the vertex $[j \ i]$ in $C_B$.
Since there exist $s<t$ such that the equivalence classes $s\sim s+n$ and $t \sim t+n$ are in $B$, we put $$[s\ \ s+n]_+, [s+n\ \ s]_+, [t\ \ t+n]_- , [t+n\ \ t]_- \in C_B.$$
By exchanging $+$ and $-$, we get $C^*_B$.
Now, we prove that $\Psi_2$ is well-defined.
For any vertex $[j\ i] \in C_B$, we have $[j+n\ \ i+n] \in C_B$ since $B$ is symmetric.
As the convex hull of each equivalence class $i \sim j$ with $j \neq i\pm n$ is disjoint with any other convex hulls, by \eqref{G-eq:case2h}, we have $H([j+n\ \ i+n]) \cap C_B =\{[j\ i], [j+n\ \ i+n]\}.$
Similarly, by \eqref{G-eq:case3h}, we have 
$$H([s\ \ s+n]_{+}) \cap C_B =\{[s\ \ s+n]_{+}, [s+n\ \ s]_{+}\} \and H([t\ \ t+n]_{-}) \cap C_B =\{[t\ \ t+n]_{-}, [t+n\ \ t]_{-}\}.$$
 Therefore, (C2) holds.
For the vertices of the forms $[r\ \ r+n]_+$ and $[r\ \ r+n]_-$, by \eqref{G-eq:case3h}, we have 
$$[r\ \ r+n]_{+} \in H([s\ \ s+n]_+) \cup H([s+n\ \ s]_+) \and [r\ \ r+n]_- \in H([t\ \ t+n]_-) \cup H([t+n\ \ t]_-).$$
For other vertices of the form $[r\ s] \in (\ZZ D_{n+2})_0$ with $s \neq r \pm n$, there exists an equivalence class $r \sim t$ in $B$ and $[r\ s] \in H([r\ t]) \cup H([r+n\ \ t+n])$. So (C1) holds. 
Thus, $C_B$ and $C^*_B$ are configurations of $\ZZ D_{n+2}$. The map $\Psi_2$ is well-defined.

By a similar reasoning in Theorem \ref{G-thm:correspondenceB}, we have $\Phi_1  \Psi_1=\Id$ and $\Psi_1  \Phi_1=\Id$.
It is immediate that $\Phi_2  \Psi_2= \Id$. It is clear that $C \subset D$ or $C \subset D^*$ for $\Psi_2  \Phi_2(C)= \{D, D^*\}$. Using (C1), we get $C= D$ or $C = D^*$ as in the proof of Theorem \ref{G-thm:correspondenceB}.
 \end{proof}
 
 \section{Types $E$, $F$ and $G$}
 \label{G-s:appendix}
In this section, for sake of completeness, we discuss the exceptional cases and give a list of configurations of types $E_6$, $E_7$, $E_8$, $F_4$ and $G_2$. The cases $E_6$, $E_7$ and $E_8$ were already studied in \cite{Hummel} (unpublished).

 \begin{theorem}
 \label{G-th:exceptional}
 For exceptional cases, the number of configurations is given by the following table
   \begin{center}
\begin{tabular}{|c|c|c|c|c|}
\hline
$E_6$ & $E_7$ &$E_8$ & $F_4$ &$G_2$\\
\hline
$77$&$346$ &$1892$ &$25$ &$4$\\
\hline
$11$&$44$ &$138$ &$5$ &$2$\\
\hline
\end{tabular}
\end{center}
where the last row is the number of configurations modulo the action of $\tau$.
\end{theorem}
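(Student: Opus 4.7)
The statement is a finite enumeration result, so my proof proposal is computational: for each exceptional Dynkin type $\Delta \in \{E_6, E_7, E_8, F_4, G_2\}$, I would enumerate all subsets of $(\ZZ\Delta)_0$ satisfying the conditions (C1) and (C2) of Definition \ref{G-def:configuration}, and then count orbits under $\tau$ to obtain the second row of the table. The key simplification is Remark \ref{G-rem:cyclic}, which rephrases (C2) for $Q = \ZZ\Delta$ as the condition $C \cap H(c) = \{c, \omega(c)\}$ for each $c \in C$, together with $\omega(C) = C$.

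The first step is to make the data $(\omega, H)$ completely explicit for each $\Delta$. Using the recursion of Definition \ref{G-def:hmor}, I would compute the functions $h_n(-,x)$ for every vertex $x$ in a fundamental domain for $\tau$ and read off $\omega(x)$ via Proposition \ref{G-lem:h0} and the set $H(x)$ via Lemma \ref{G-rem:handhq}. Since $\omega$ commutes with $\tau$ and equals a well-defined power of $\tau$ composed with a diagram automorphism (as is standard for the Serre functor on the derived category of a Dynkin quiver), this data is periodic and finitely describable. The second step is to observe that because of (C2), a configuration $C$ is determined by a finite family of $\omega$-orbits, and since the relevant Coxeter transformations on $\ZZ\Delta$ have finite order on $\tau$-orbits, the search space for $C$ modulo $\tau$ is finite and small.

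The third step is the enumeration itself. I would implement a backtracking procedure: pick a $\tau$-orbit representative $c_1$, add the full $\omega$-orbit of $c_1$ to $C$, then iteratively pick further vertices $c_2, c_3, \ldots$ from the complement of $\bigcup_i H(c_i)$ (needed to eventually satisfy (C1)), at each step verifying that $H(c_j) \cap C \subset \{c_j, \omega(c_j)\}$ (needed for (C2)). The process terminates either in failure or when $\bigcup_i H(c_i) = (\ZZ\Delta)_0$, at which point (C1) is satisfied. For $G_2$ and $F_4$, where the $\tau$-orbit data is very small, this enumeration can plausibly be carried out by hand; for $E_6, E_7, E_8$ it requires a short computer program. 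Quotienting the resulting list by the $\tau$-action (well-defined because $\tau$ preserves both $\omega$ and $H$) yields the second row of the table.

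The main obstacle I expect is purely bookkeeping rather than conceptual: the sets $H(x)$ grow large for $E_7, E_8$ (and the number $1892$ already hints at the combinatorial explosion), so the backtracking must be organized carefully to avoid redundant work and to verify correctness. A useful consistency check would be to test the same program on $A_n$ and $D_n$ against the closed-form Motzkin-type numbers $M(n), M^s(n), M^c(n)$ computed earlier in the paper; once the code reproduces those sequences, the exceptional values $77, 346, 1892, 25, 4$ and their $\tau$-orbit counts $11, 44, 138, 5, 2$ can be read off with confidence.
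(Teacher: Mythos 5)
Your proposal is correct and matches the paper's method: the paper likewise observes that (C2) forces every configuration to be stable under $\omega$ (which is $\tau^5\rho$ for $E_6$, $\tau^8$ for $E_7$, $\tau^{14}$ for $E_8$, $\tau^5$ for $F_4$, $\tau^2$ for $G_2$), reduces via Proposition \ref{G-pro:covering} to the finite quotient $\ZZ\Delta/\langle\omega\rangle$, and enumerates the configurations by computer, listing representatives modulo the $\tau$-action. The only cosmetic difference is that the paper presents the resulting lists explicitly rather than describing the backtracking search.
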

 
 We give the list of configurations of types $E_6$, $E_7$, $E_8$, $F_4$ and $G_2$ modulo the action of $\tau$ as follows.
 
By calculation, configurations of $\ZZ E_6$ are stable under $\tau^5 \rho$, where $\tau$ is the Auslander-Reiten translation and $\rho$ is the only non-trivial automorphism of $E_6$. By Proposition \ref{G-pro:covering}, we only need to find the list of configurations of $\ZZ E_6 /\langle\tau^5 \rho\rangle$. We give the list of configurations of $\ZZ E_6 /\langle\tau^5\rho\rangle$ modulo the action of $\tau$.
 \begin{center}
    \includegraphics{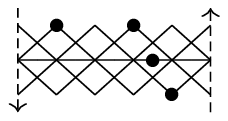}
\includegraphics{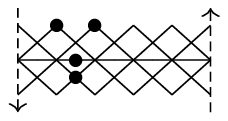}
\includegraphics{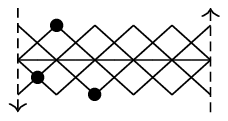}
\includegraphics{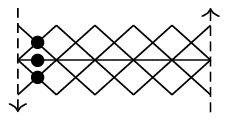}
\includegraphics{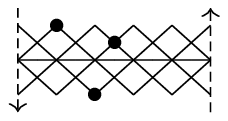}
\includegraphics{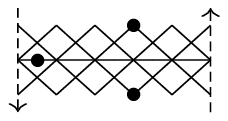}

\includegraphics{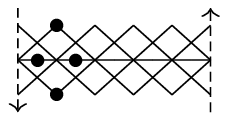}
\includegraphics{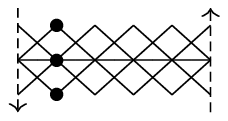}
\includegraphics{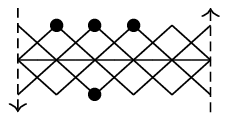}
\includegraphics{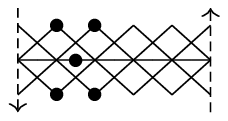}
\includegraphics{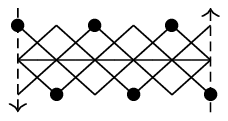}

 \end{center}

 By calculation, configurations of $\ZZ E_7$ are stable under $\tau^8 $. By Proposition \ref{G-pro:covering}, we only need to find out configurations of $\ZZ E_7 /\langle\tau^8\rangle$. We give the list of configurations of $\ZZ E_7 /\langle\tau^8\rangle$ modulo the action of $\tau$.
  \begin{center}
      \includegraphics{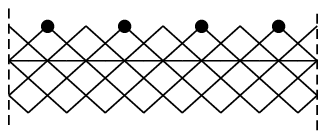}
\includegraphics{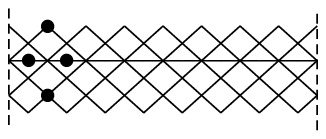}
\includegraphics{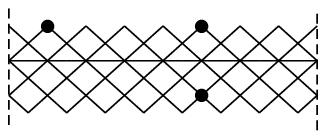}
\includegraphics{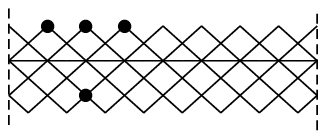}

\includegraphics{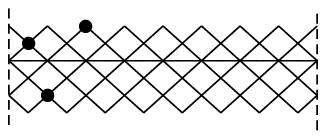}
\includegraphics{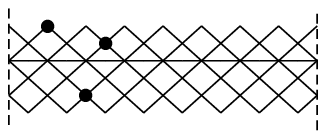}
\includegraphics{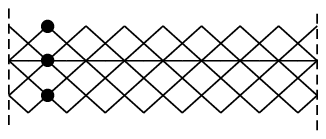}
\includegraphics{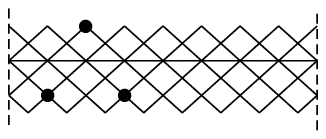}

\includegraphics{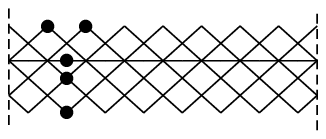}
\includegraphics{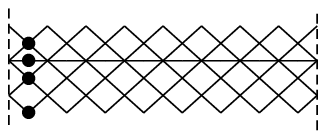}
\includegraphics{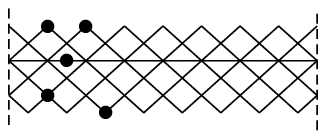}
\includegraphics{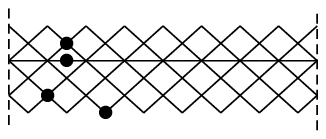}

\includegraphics{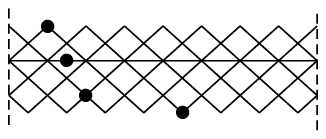}
\includegraphics{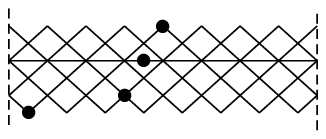}
\includegraphics{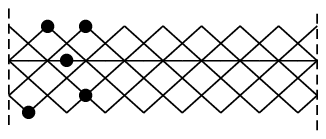}
\includegraphics{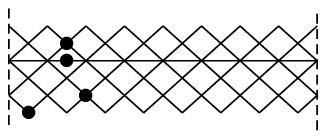}

\includegraphics{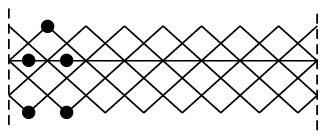}
\includegraphics{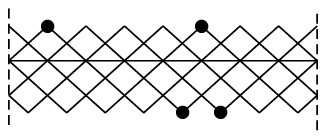}
\includegraphics{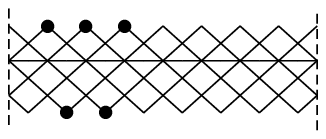}
\includegraphics{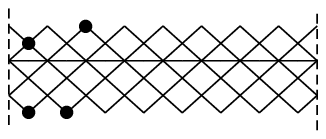}

\includegraphics{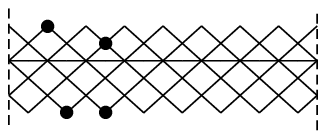}
\includegraphics{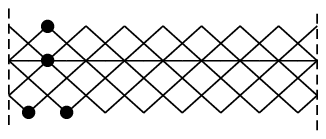}
\includegraphics{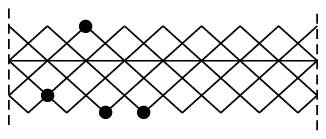}
\includegraphics{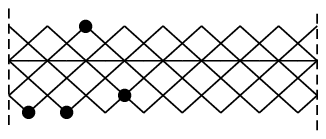}

\includegraphics{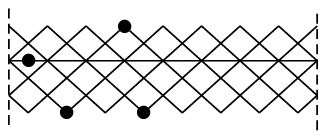}
\includegraphics{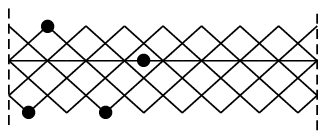}
\includegraphics{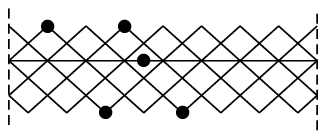}
\includegraphics{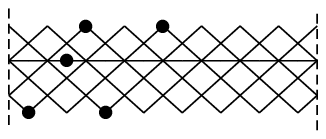}

\includegraphics{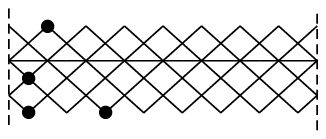}
\includegraphics{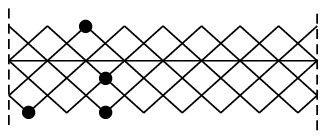}
\includegraphics{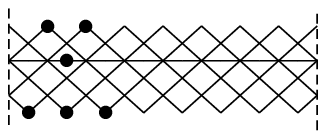}
\includegraphics{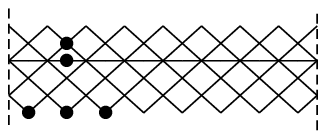}

\includegraphics{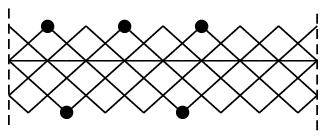}
\includegraphics{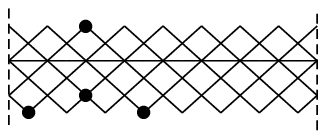}
\includegraphics{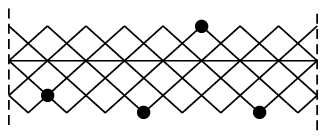}
\includegraphics{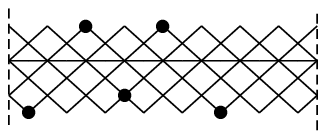}

\includegraphics{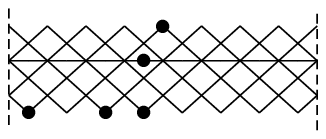}
\includegraphics{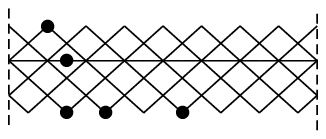}
\includegraphics{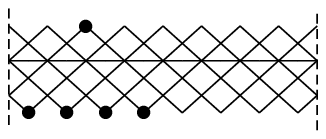}
\includegraphics{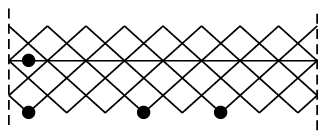}

\includegraphics{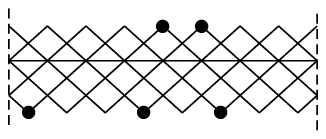}
\includegraphics{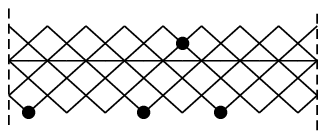}
\includegraphics{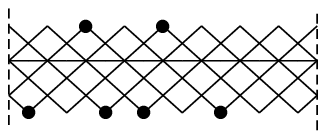}
\includegraphics{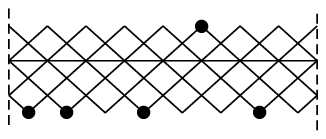}

 \end{center}
 
Similarly, configurations of $\ZZ E_8$ are preserved by $\tau^{14}$. We give the list of configurations of $\ZZ E_8 /\langle\tau^{14}\rangle$ modulo the action of $\tau$.
 \begin{center}
        \includegraphics{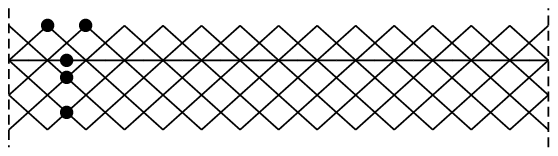}
\includegraphics{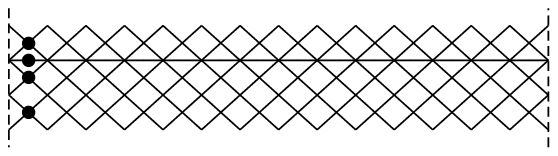}

\includegraphics{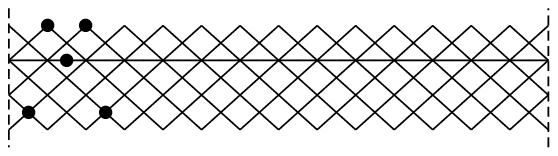}
\includegraphics{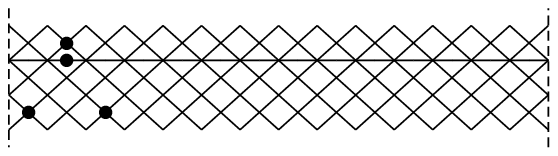}

\includegraphics{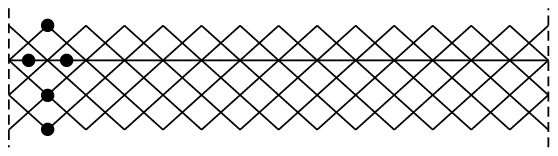}
\includegraphics{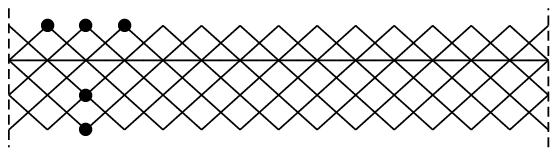}

\includegraphics{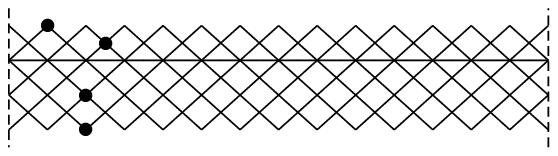}
\includegraphics{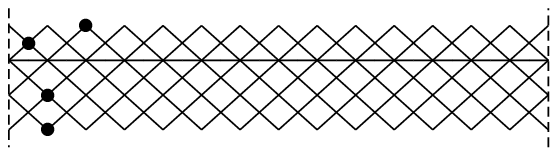}

\includegraphics{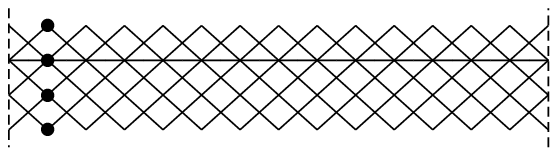}
\includegraphics{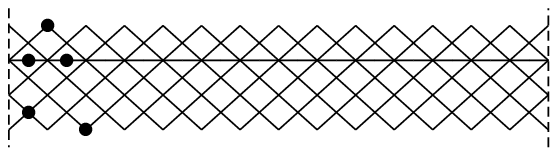}

\includegraphics{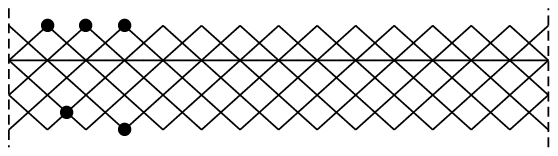}
\includegraphics{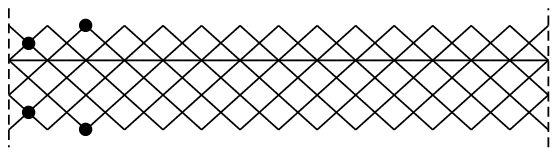}

\includegraphics{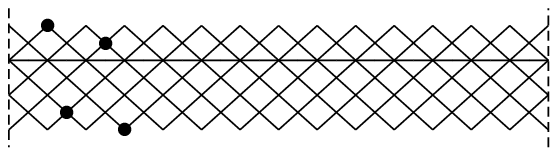}
\includegraphics{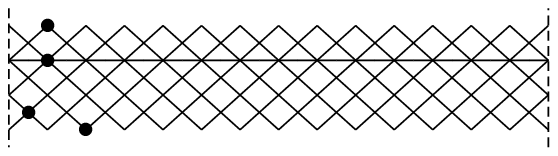}

\includegraphics{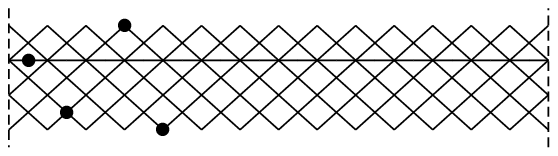}
\includegraphics{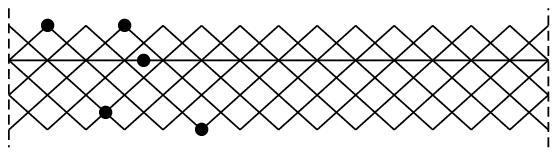}

\includegraphics{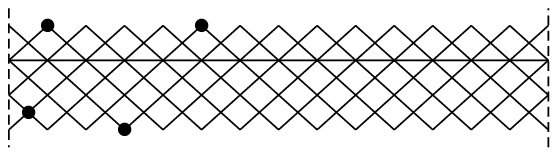}
\includegraphics{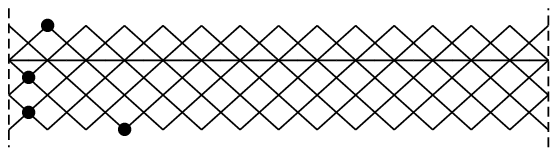}

\includegraphics{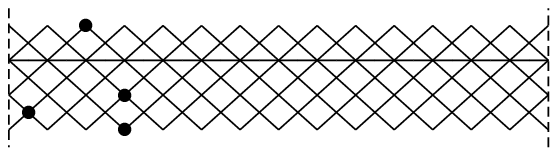}
\includegraphics{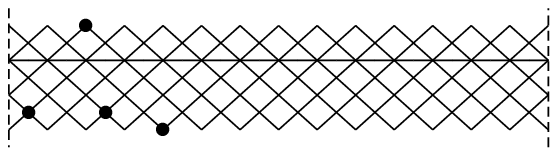}

\includegraphics{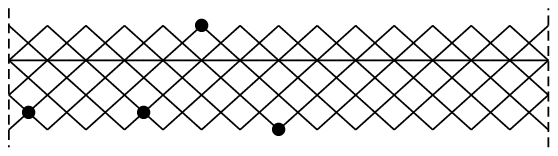}
\includegraphics{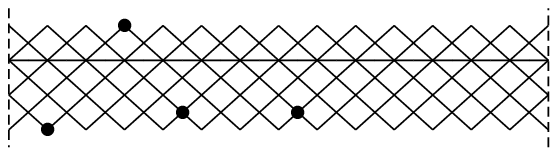}

\includegraphics{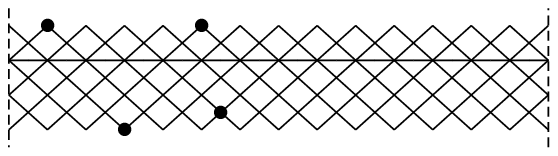}
\includegraphics{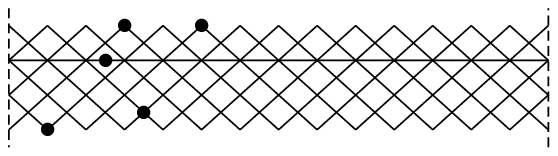}

\includegraphics{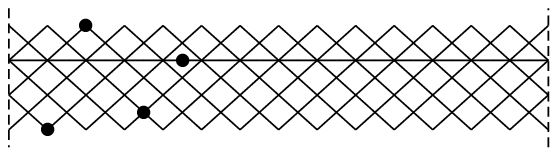}
\includegraphics{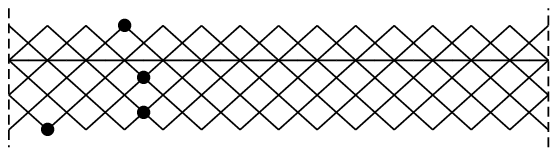}

\includegraphics{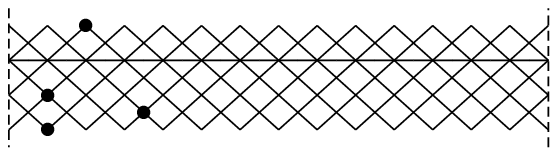}
\includegraphics{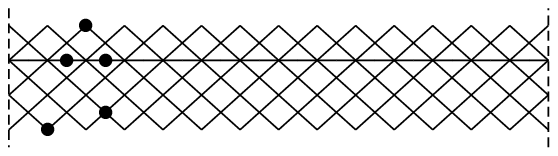}

\includegraphics{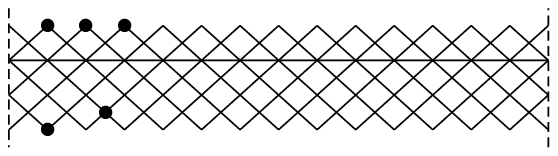}
\includegraphics{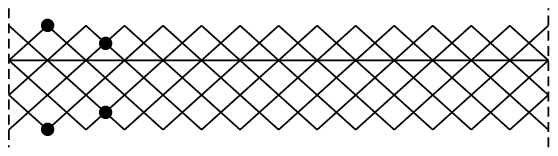}

\includegraphics{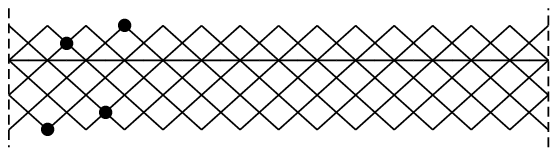}
\includegraphics{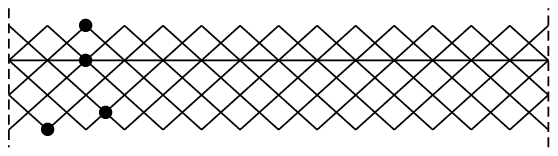}

\includegraphics{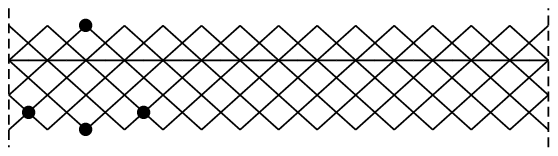}
\includegraphics{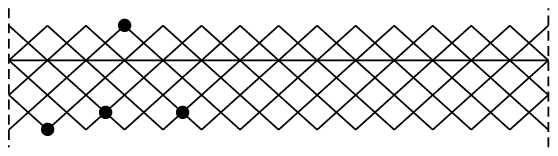}

\includegraphics{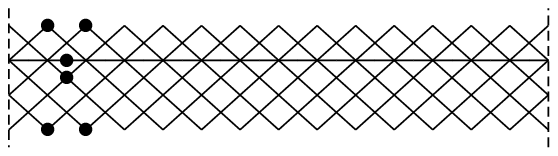}
\includegraphics{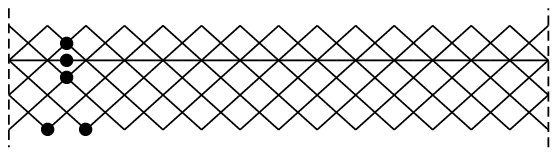}

\includegraphics{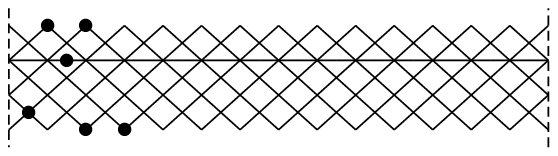}
\includegraphics{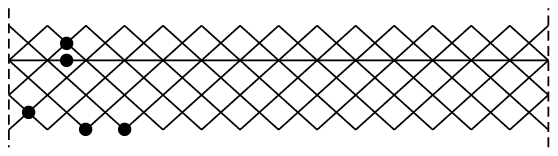}

\includegraphics{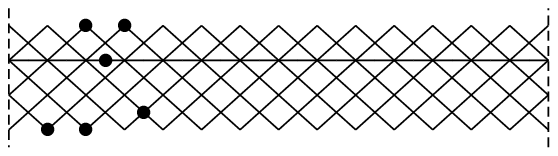}
\includegraphics{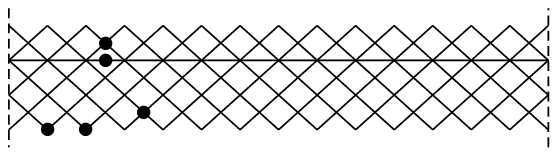}

\includegraphics{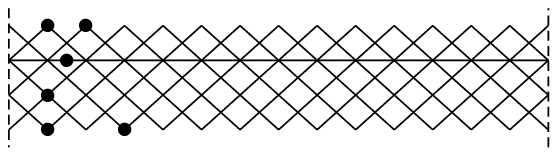}
\includegraphics{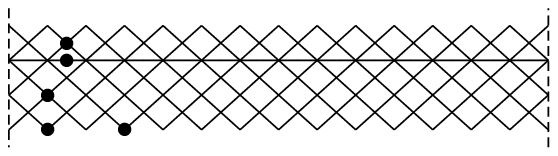}

\includegraphics{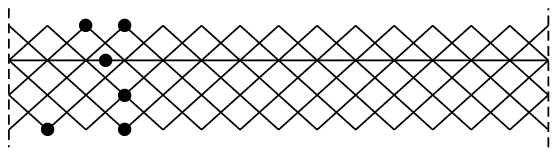}
\includegraphics{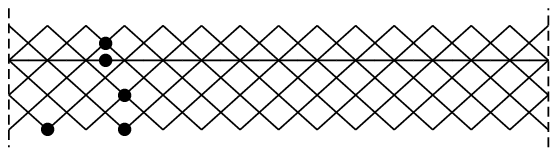}

\includegraphics{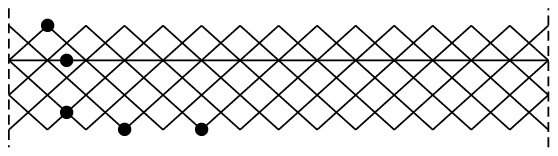}
\includegraphics{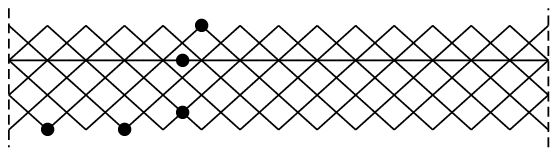}

\includegraphics{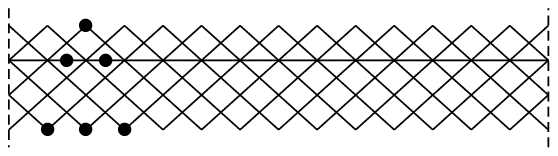}
\includegraphics{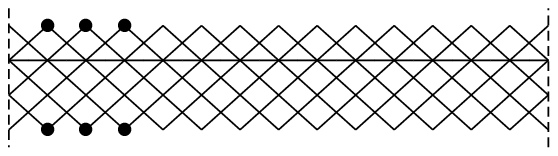}

\includegraphics{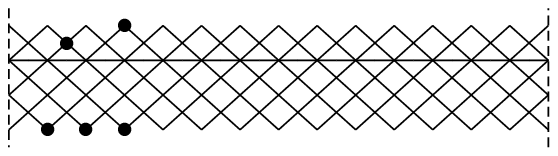}
\includegraphics{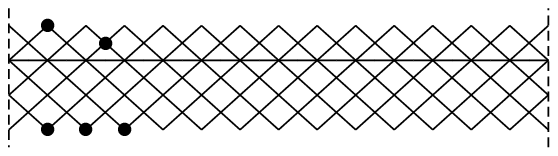}

\includegraphics{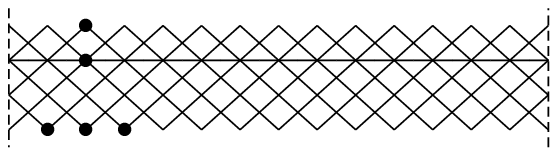}
\includegraphics{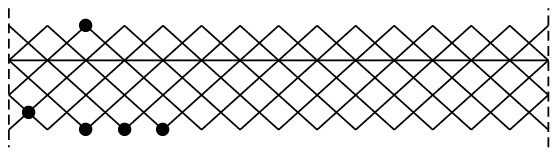}

\includegraphics{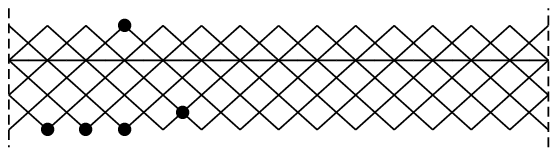}
\includegraphics{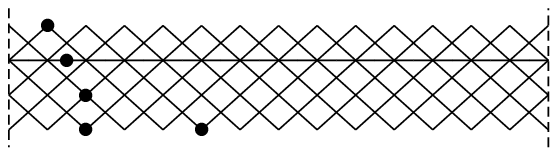}

\includegraphics{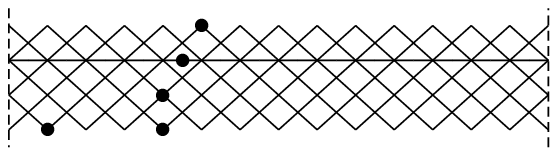}
\includegraphics{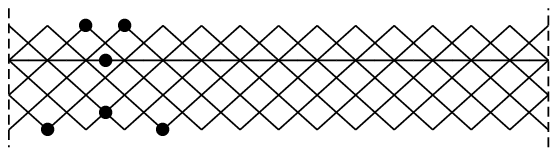}

\includegraphics{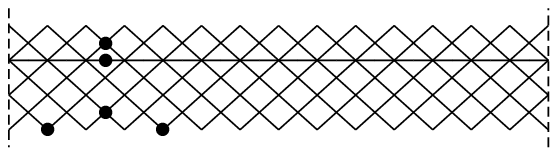}
\includegraphics{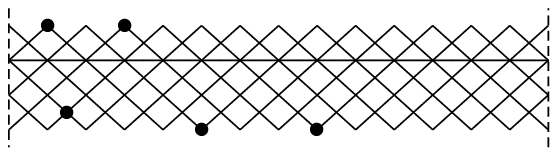}

\includegraphics{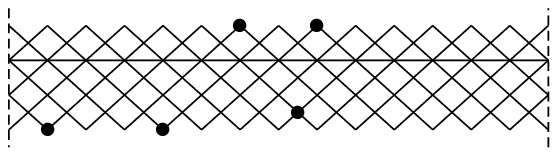}
\includegraphics{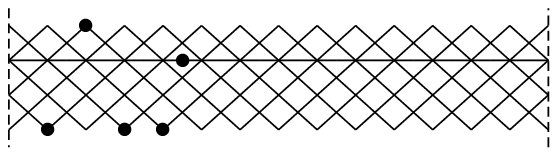}

\includegraphics{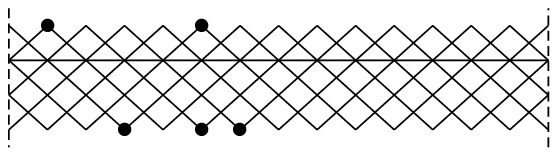}
\includegraphics{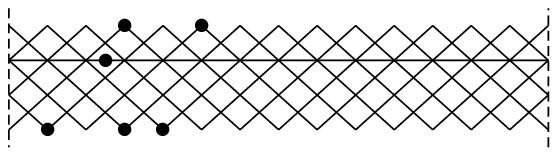}

\includegraphics{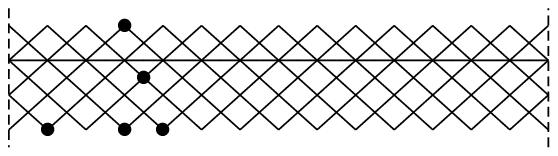}
\includegraphics{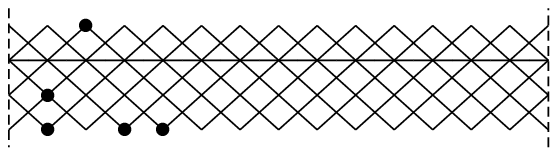}

\includegraphics{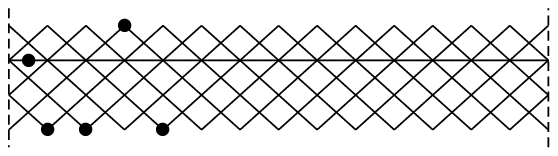}
\includegraphics{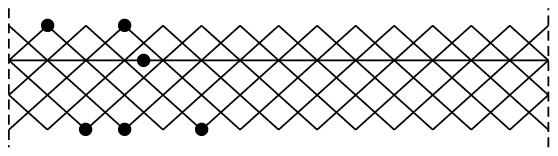}

\includegraphics{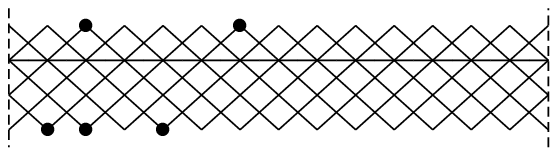}
\includegraphics{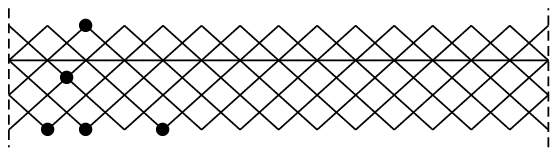}

\includegraphics{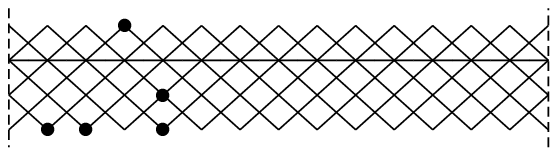}
\includegraphics{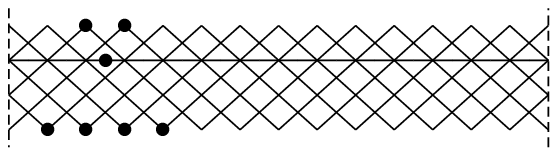}

\includegraphics{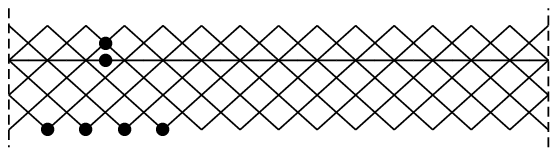}
\includegraphics{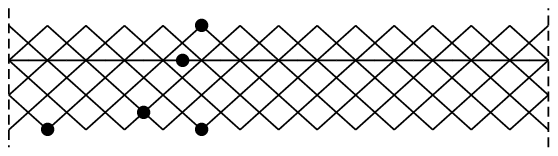}

\includegraphics{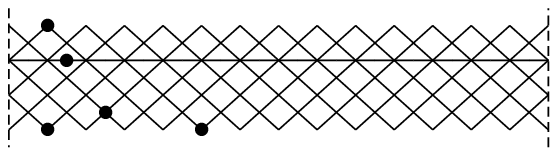}
\includegraphics{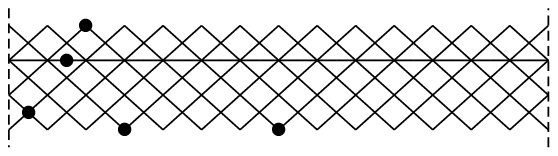}

\includegraphics{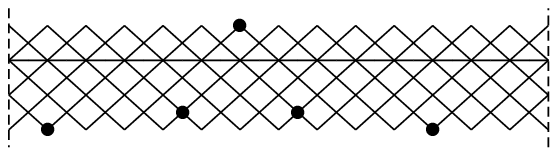}
\includegraphics{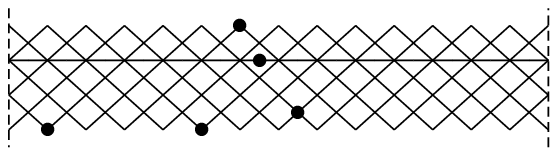}

\includegraphics{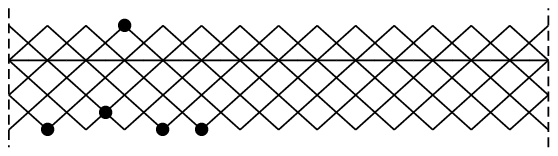}
\includegraphics{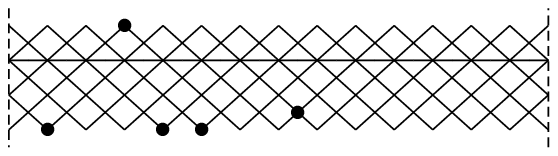}

\includegraphics{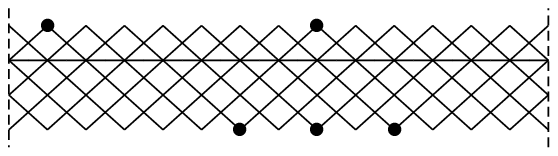}
\includegraphics{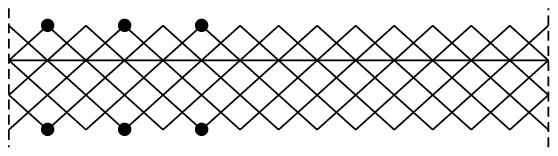}

\includegraphics{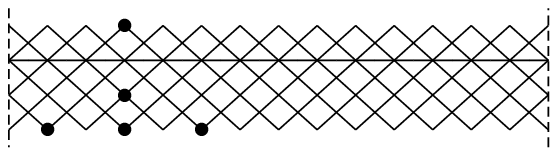}
\includegraphics{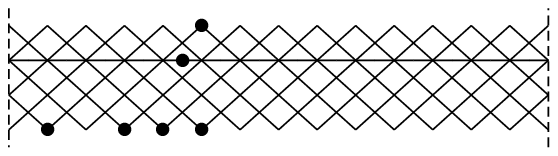}

\includegraphics{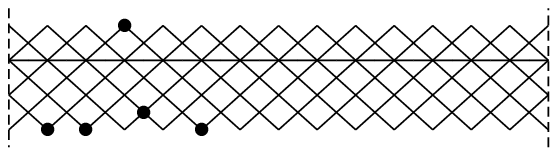}
\includegraphics{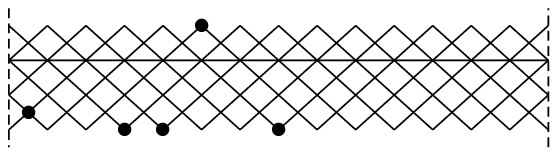}

\includegraphics{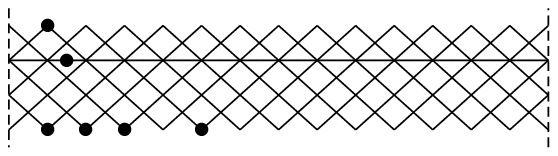}
\includegraphics{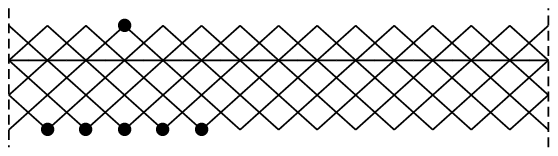}

\includegraphics{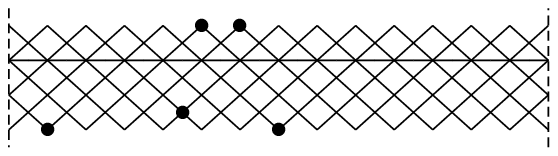}
\includegraphics{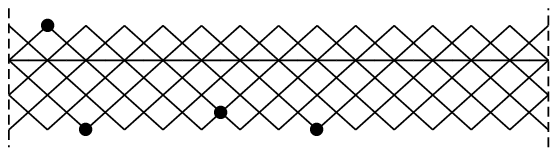}

\includegraphics{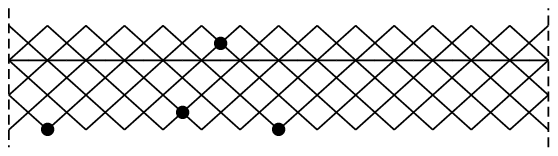}
\includegraphics{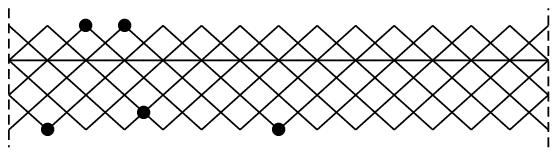}

\includegraphics{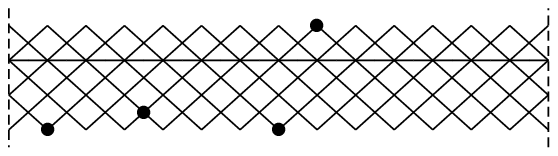}
\includegraphics{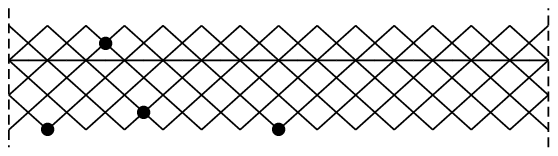}

\includegraphics{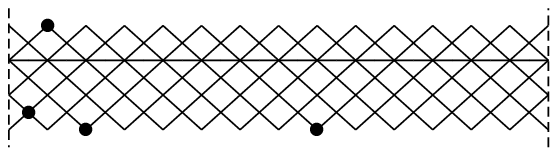}
\includegraphics{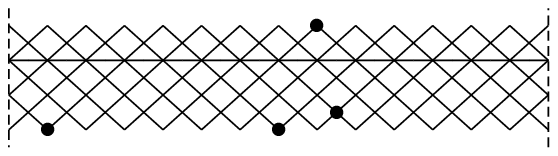}

\includegraphics{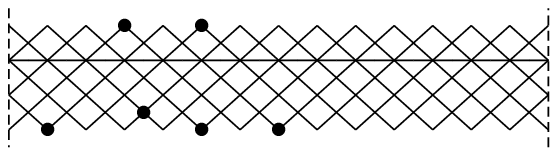}
\includegraphics{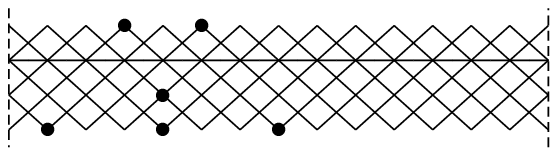}

\includegraphics{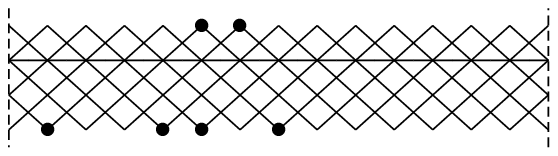}
\includegraphics{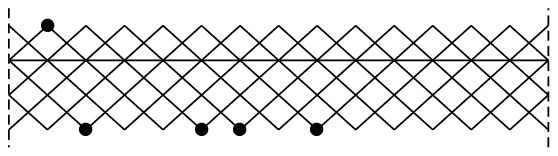}

\includegraphics{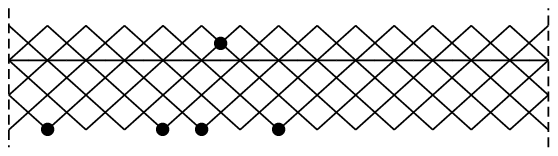}
\includegraphics{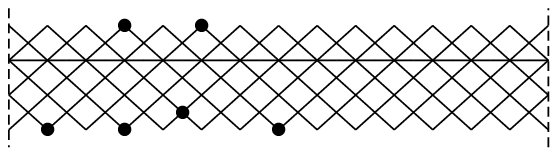}

\includegraphics{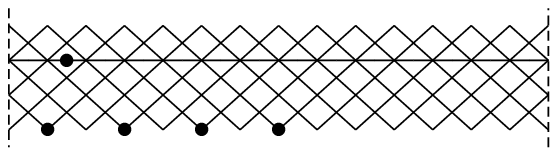}
\includegraphics{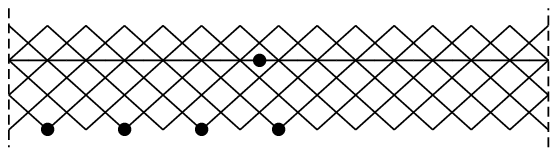}

\includegraphics{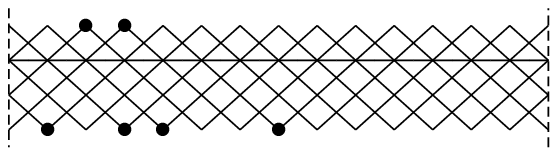}
\includegraphics{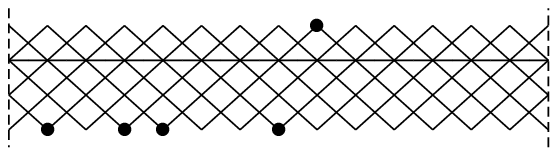}

\includegraphics{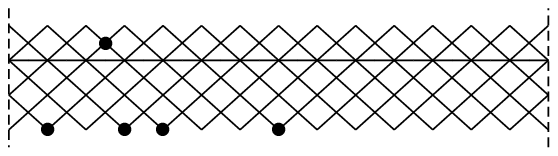}
\includegraphics{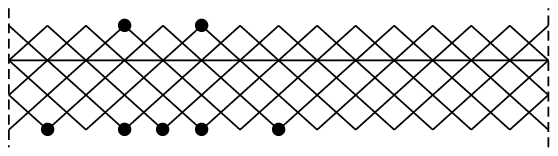}

\includegraphics{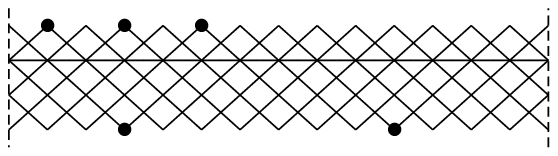}
\includegraphics{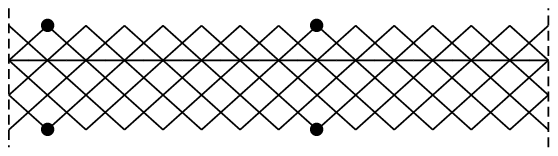}

\includegraphics{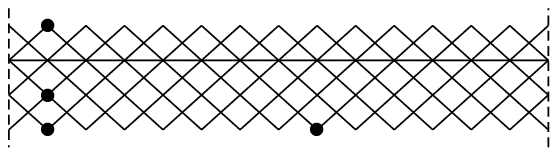}
\includegraphics{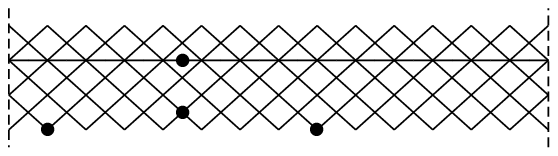}

\includegraphics{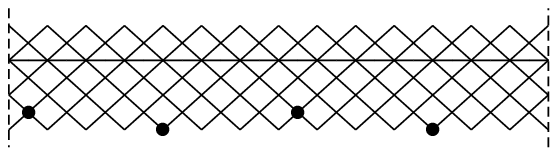}
\includegraphics{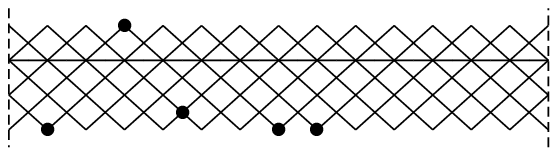}

\includegraphics{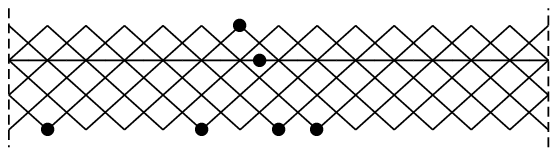}
\includegraphics{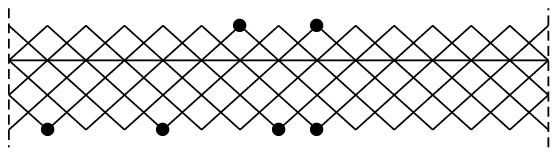}

\includegraphics{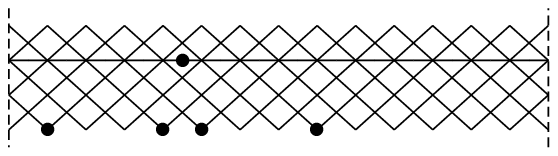}
\includegraphics{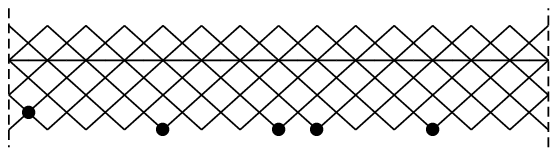}

\includegraphics{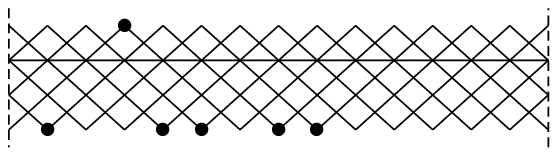}
\includegraphics{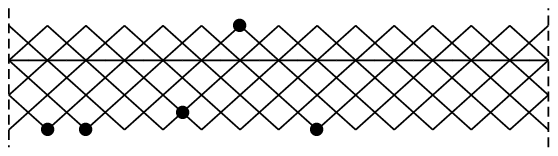}

\includegraphics{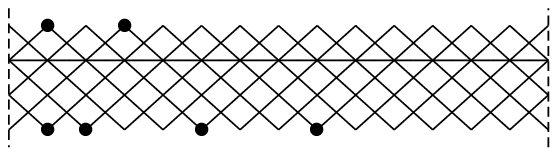}
\includegraphics{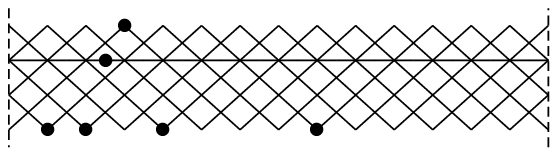}

\includegraphics{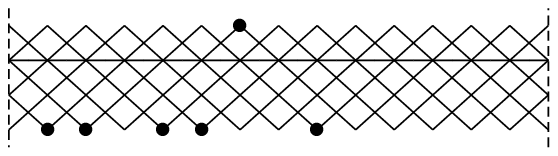}
\includegraphics{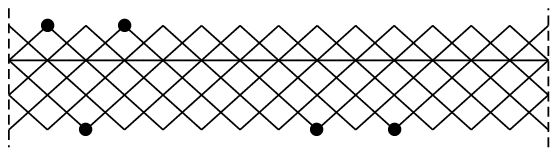}

\includegraphics{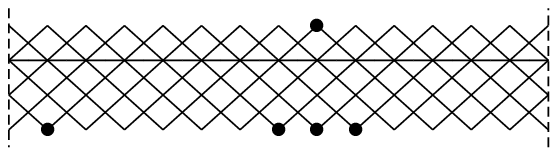}
\includegraphics{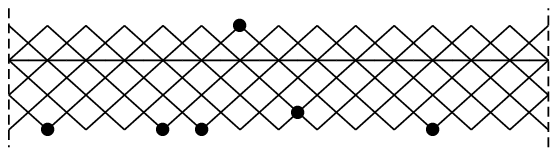}

\includegraphics{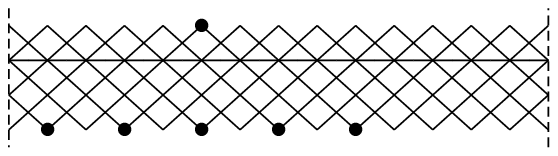}
\includegraphics{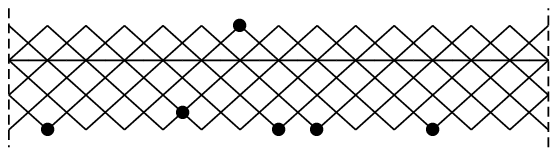}

\includegraphics{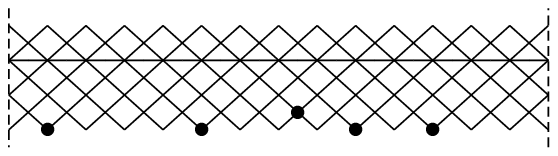}
\includegraphics{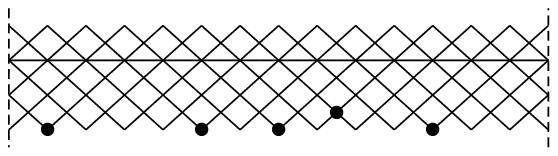}

\includegraphics{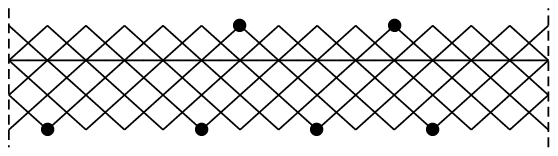}
\includegraphics{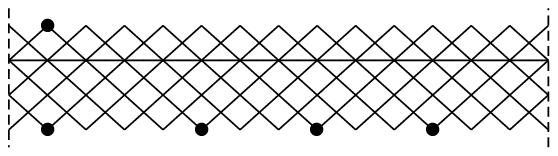}

\includegraphics{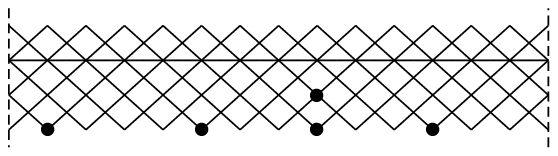}
\includegraphics{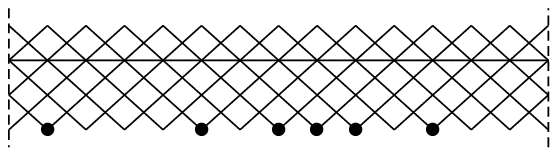}

\includegraphics{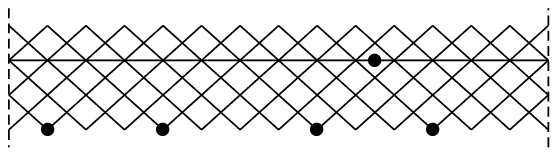}
\includegraphics{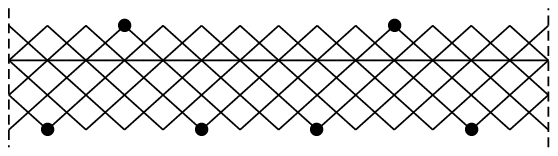}

\includegraphics{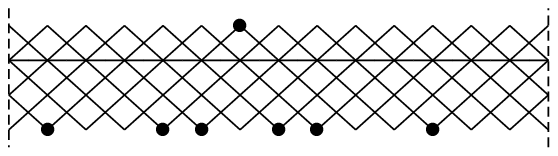}
\includegraphics{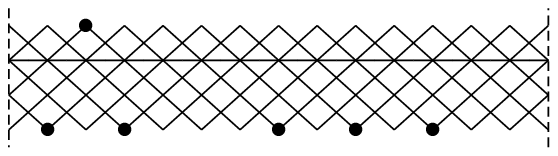}

\includegraphics{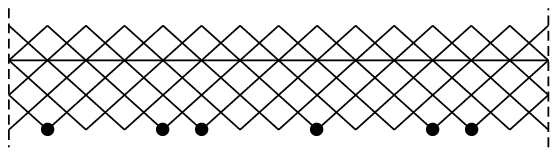}
\includegraphics{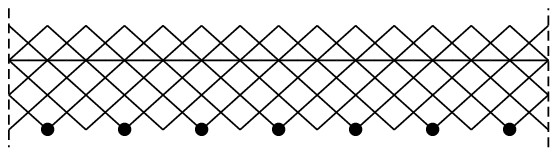}

 \end{center}
 
Similarly, configurations of $\ZZ F_4$ are preserved by $\tau^{5}$. We give the list of configurations of $\ZZ F_4 /\langle\tau^5 \rangle$ modulo the action of $\tau$.
 \begin{center}
        \includegraphics{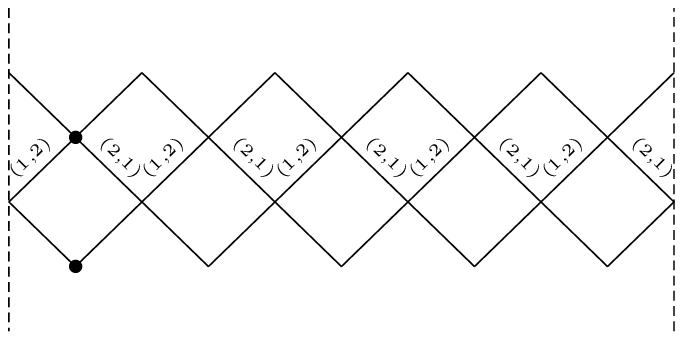}
\includegraphics{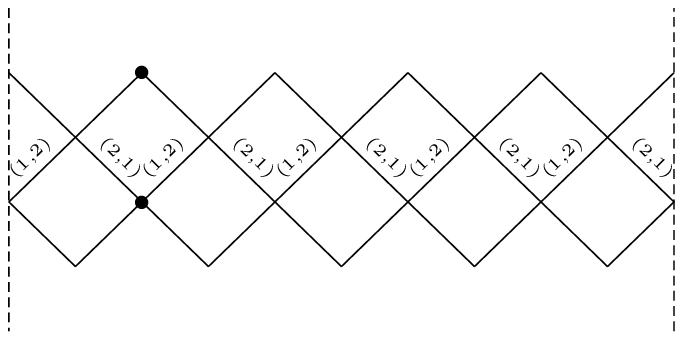}

\includegraphics{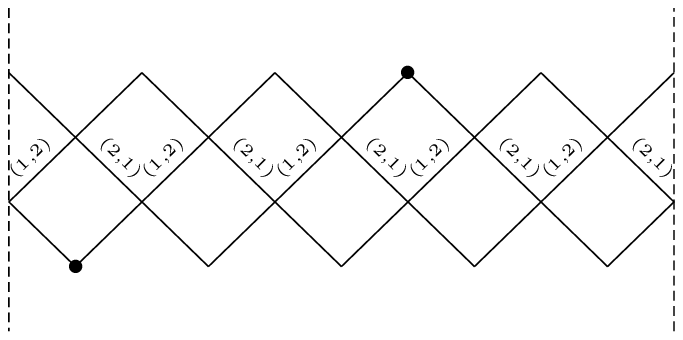}
\includegraphics{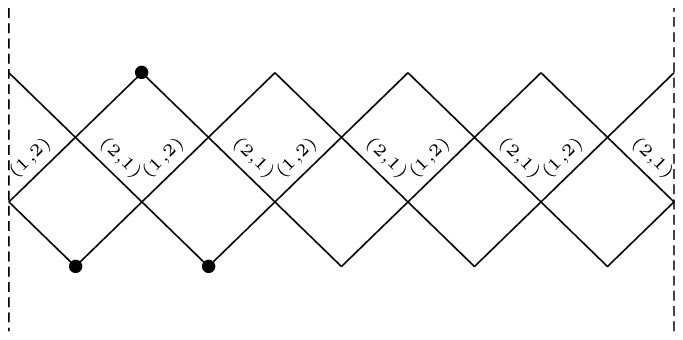}

\includegraphics{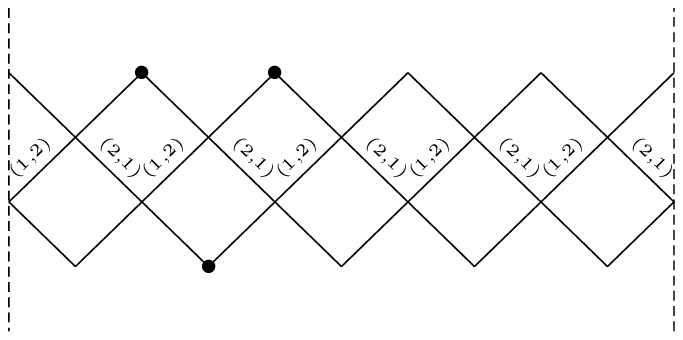}

 \end{center}
 
 Similarly, configurations of $\ZZ G_2$ are preserved by $\tau^2$. We give the list of configurations of $\ZZ G_2 /\langle\tau^2 \rangle$ modulo the action of $\tau$.
 \begin{center}
        \includegraphics{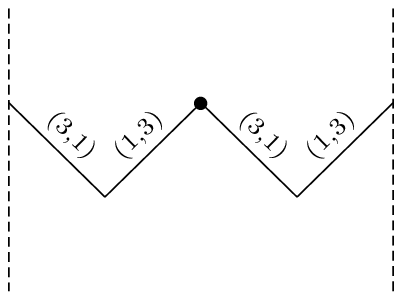}
\includegraphics{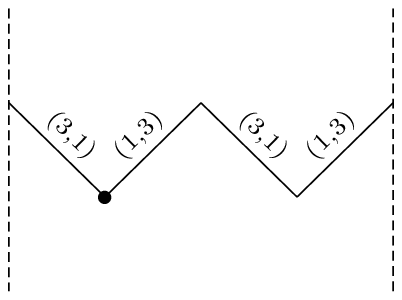}

 \end{center}

\bibliographystyle{plain}
\bibliography{../biblio/biblio}
\end{document}